\documentclass[onefignum,onetabnum]{siamonline190516}



\usepackage{amssymb}
\usepackage{graphicx}
\usepackage{epstopdf}
\ifpdf
  \DeclareGraphicsExtensions{.eps,.pdf,.png,.jpg}
\else
  \DeclareGraphicsExtensions{.eps}
\fi
\usepackage{amsfonts,extarrows,leftidx,cancel}
\usepackage{bbm}
\usepackage[mathscr]{euscript}
\usepackage{algpseudocode}
\usepackage{algorithm}
\usepackage{enumitem}
\usepackage{subfig}
\graphicspath{{./figures/}}



\newcommand{\Rb}{\mathbb{R}}

\newcommand{\Sb}{\mathbb{S}}

\newcommand{\sfC}{\mathsf{C}}
\newcommand{\sfU}{\mathsf{U}}
\newcommand{\sfV}{\mathsf{V}}

\newcommand{\sfL}{\mathsf{L}}
\newcommand{\sfK}{\mathsf{K}}

\newcommand{\sfTheta}{\mathsf{\Theta}}

\newcommand{\MM}{{\mathrm{M}}}
\newcommand{\NN}{{\mathrm{N}}}

\newcommand{\eps}{\varepsilon}



\newcommand{\algrule}[1][.2pt]{\hspace*{-.6in}\hrulefill}
\algdef{SE}[SUBALG]{Indent}{EndIndent}{}{\algorithmicend\ }%
\algtext*{Indent}
\algtext*{EndIndent}

\DeclareMathOperator*{\argmax}{argmax}
\DeclareMathOperator*{\argmin}{argmin}

\newcommand{\rmd}{\mathrm{d}}

\newsiamremark{remark}{Remark}
\newsiamremark{hypothesis}{Hypothesis}
\crefname{hypothesis}{Hypothesis}{Hypotheses}
\newsiamthm{claim}{Claim}
\newtheorem{assumption}{Assumption}[section]
\headers{On optimal bases for multiscale PDEs and Bayesian homogenization}{S. Chen, Z. Ding, Q. Li, and S. J. Wright}

\title{On optimal bases for multiscale PDEs and Bayesian homogenization\thanks{Submitted to the editors DATE.
\funding{This work was funded by ONR-N00014-21-1-214, NSF DMS-2023239 and CCF-2224213, NSF-CAREER-1750488, AFOSR Grant FA9550-21-1-0084, and Office of the Vice Chancellor for Research and Graduate Education at the University of Wisconsin Madison with funding from the Wisconsin Alumni Research Foundation. }}}

\author{Shi Chen\thanks{Department of Mathematics, University of Wisconsin-Madison, Madison, WI
  (\email{schen636@wisc.edu}, \url{https://simonchenthu.github.io/}).}
  \and Zhiyan Ding\thanks{Department of Mathematics, University of California-Berkeley, Berkeley, CA
  (\email{zding.m@math.berkeley.edu}, \url{https://math.berkeley.edu/\string~zding.m/}).}
  \and Qin Li\thanks{Department of Mathematics, University of Wisconsin-Madison, Madison, WI
  (\email{qinli@math.wisc.edu}, \url{https://people.math.wisc.edu/\string~qinli/}).}
  \and Stephen J. Wright\thanks{Department of Computer Sciences, University of Wisconsin-Madison, Madison, WI
  (\email{swright@cs.wisc.edu}, \url{https://pages.cs.wisc.edu/\string~swright/}).}}

\usepackage{amsopn}
\DeclareMathOperator{\diag}{diag}


\ifpdf
\hypersetup{
  pdftitle={On optimal bases for multiscale PDEs and Bayesian homogenization},
  pdfauthor={S. Chen, Z. Ding, Q. Li, and S. J. Wright}}
\fi




\begin{document}

\maketitle

\begin{abstract}
PDE solutions are numerically represented by basis functions. Classical methods employ pre-defined bases that encode minimum desired PDE properties, which naturally cause redundant computations.
What are the best bases to numerically represent PDE solutions? From the analytical perspective, the Kolmogorov $n$-width~\cite{Kolmogoroff1936UberDB} is a popular criterion for selecting representative basis functions. From the Bayesian computation perspective, the concept of optimality  selects the modes that, when known, minimize the variance of the conditional distribution of the rest of the solution~\cite{doi:10.1137/140974596}.
We show that these two definitions of optimality are equivalent.
Numerically, both criteria reduce to solving a Singular Value Decomposition (SVD), a procedure that can be made numerically efficient through randomized sampling.
We demonstrate computationally the effectiveness of the  basis functions so obtained on several linear and nonlinear problems. In all cases, the optimal accuracy is achieved with a small set of basis functions.
\end{abstract}

\begin{keywords}
  Multiscale PDE, Spectral Methods, Finite Element Methods, Bayesian Homogenization, Random Sampling
\end{keywords}

\begin{AMS}
  65N30, 34E13, 62C10, 60H30
\end{AMS}

\section{Introduction}

Given an arbitrary PDE system, how do we find the best basis functions to represent its solutions?

When ``best" is defined in terms of the  relative distance between a solution and its projection onto the space spanned by the basis functions, the Kolmogorov $n$-width achieves the best measure over all possible collections of $n$ basis functions~\cite{Kolmogoroff1936UberDB}.
Traditional numerical methods use basis functions determined ``manually,'' for example, the orthogonal polynomials / Fourier bases used in spectral methods~\cite{GoOr:1977numerical,Be:1997spectral,Tr:2000spectral}, piecewise polynomials used in Continuous Galerkin (CG)~\cite{Ci:2002finite,Br:2008mathematical} or Discontinuous Galerkin (DG) methods~\cite{He:2007nodal,CoKaSh:2012discontinuous}, and classical finite differences~\cite{Le:2007finite,Th:2013numerical}.
When information about the equations is not fully utilized in the construction of the bases, it can contain much redundancy: Considerably more than $n$ functions may be required to achieve the $n$-width. For elliptic PDEs, especially in the context of multi-scale computing, the design of optimal basis was pioneered in~\cite{BaLi:2011optimal}. Relevant approaches include the generalized finite element method~\cite{BaLiSiSt:2020multiscale,SmPaAn:2016optimal,MaSc:2021novel,MaSc:2022error}, optimal basis with hierarchical structure~\cite{Ow:2017multigrid,OwZh:2017gamblets,OwSc:2019operator}, nonlocal multicontinua upscaling~\cite{ChEfLe:2018constraint,ChEfLeVaWa:2018non}, methods based on random SVD~\cite{CaEfGaLi:2016randomized,BuSm:2018randomized,ChLiLuWr:2020randomized,ChLiLuWr:2020random} and the exponentially convergent multiscale finite element method~\cite{ChHoWa:2021exponential,ChHoWa:2023exponentially}.
For PDEs of general types, techniques are not known for determining the optimal basis accurately and efficiently. For this reason, the Kolmogorov $n$-width is believed to be useful conceptually but not especially relevant to practice.

Another perspective on this question was explored in the pioneering paper~\cite{doi:10.1137/140974596}.
There it was argued that one can view the PDE as a map from a source term modeled as a Gaussian process to the PDE solution that is also viewed as a Gaussian field with a shifted mean and stretched covariance that encodes information about the Green's functions.
When the solution is tested on some basis functions, the whole solution becomes a conditioned Gaussian.
From this perspective, the ``best" basis would be made up of the test functions that are most revealing --- the ones that minimize the variance of the conditioned Gaussian.

These discussions raise two questions.
\begin{itemize}
\item Conceptually:
{\em What is the relationship between these two perspectives on finding optimal basis functions?}
\item Practically: {\em Can we find the optimal basis functions numerically in an effective way?}
\end{itemize}
We address both questions in this article.
Conceptually, we will argue that the two viewpoints --- viewing the solution as a Gaussian process, and viewing the solution an element in the solution space --- coincide.
The basis functions that achieve the Kolmogorov $n$-width exactly correspond to the test functions that ``reveal" the variance information of the Gaussian field.
In other words, tXShe most revealing test functions correspond to the basis functions that best approximate the solution space.
Moreover, the question of optimality can be formulated as a singular value decomposition (SVD), with the optimal basis and the optimal test functions corresponding to left and right singular vectors, respectively.
For the second question on practical computations, the relationship between the Kolmogorov $n$-width and the singular value decomposition enables us to argue that the optimal basis functions can be found quite easily, especially now that SVD can be computed in linear time complexity~\cite{doi:10.1137/090771806}.

In the following sections, we justify the claims of the previous paragraph.
For linear systems, we argue in \Cref{sec:two_concepts} that the  basis functions that attain optimal Kolmogrov $n$-width coincide with optimal Bayesian homogenization.
In \Cref{sec:kol_dis} we explain their relation with SVD and translate the fast SVD solver to our PDE setting.
\Cref{sec:nonlinear} explores the  extension to nonlinear systems whose primary components can be spelled out as a linear operator so one can iterate using the basis functions built a priori.
Numerical tests are presented in~\Cref{sec:numerical} to demonstrate the performance of the optimal basis functions.

\section{Two concepts of optimality}\label{sec:two_concepts}

In this section and the next, we focus on PDEs defined by linear systems in steady state, formulated as
\begin{equation}\label{eqn:MPc}
\mathcal{L} (u(x))=f(x),\quad x\in\Omega,
\end{equation}
where $\mathcal{L}: \mathcal{Y}\rightarrow \mathcal{X}$ is a linear operator that maps the solution $u(x)$ to the source term $f(x)$.
$\mathcal{Y}$ and $\mathcal{X}$ are two Hilbert spaces defined on $\mathbb{R}^d$ and equipped with inner products $\left\langle\cdot,\cdot\right\rangle_{\mathcal{Y}}$ and $\left\langle\cdot,\cdot \right\rangle_{\mathcal{X}}$, respectively.
We often choose $\mathcal{Y}$ and $\mathcal{X}$ to be Sobolev spaces such as $H^s(\mathbb{R}^d)$ for $s\geq 0$.
We assume throughout that the equation has a compact solution map:
\begin{assumption}
\label{assum:SR}
The equation \eqref{eqn:MPc} admits a Green's function $G(x;z)\in\mathcal{S}'(\mathbb{R}^d)$ (the space of tempered distributions on $\mathbb{R}^d$) defined for each $z\in\Omega$  as the weak solution to
\begin{equation}\label{eqn:G1}
\mathcal{L} (G(x;z))=\delta_z(x),\quad x\in\Omega.
\end{equation}
Defining the linear solution operator
\begin{equation}\label{eqn:defphi}
\mathcal{S}(f)(x)=\int_\Omega G(x;z)f(z)\rmd z\,,\quad \forall f\in\mathcal{X}\,,
\end{equation}
we assume that $\mathcal{S}$ is
a bounded injective operator that maps $(\mathcal{X},\|\cdot\|_{\mathcal{X}})$ to $ (\mathcal{Y},\|\cdot\|_{\mathcal{Y}})$, and thus the solution to~\eqref{eqn:MPc} is $u=\mathcal{S}(f)$. Further,  defining  $\mathcal{S}^\ast$ to be the adjoint operator of $\mathcal{S}$, we have that $\mathcal{S}^*\mathcal{S}:(\mathcal{X},\|\cdot\|_{\mathcal{X}})\rightarrow (\mathcal{X},\|\cdot\|_{\mathcal{X}})$ is a compact operator.
\end{assumption}

Under this assumption, $\mathcal{S}$ admits a singular value decomposition (SVD).
Denoting by $\{\lambda_i, \hat{u}_i, \hat{v}_i\}_{i=1}^\infty$ the singular value-function triples for the operator $\mathcal{S}$, we have
\begin{equation}\label{eqn:basisw}
\mathcal{S}(\hat{v}_i) = \lambda_i \hat{u}_i\,,\quad \mathcal{S}^\ast(\hat{u}_i) = \lambda_i \hat{v}_i\,, \quad i=1,2,\dotsc\,,
\end{equation}
where $\lambda_1 \ge \lambda_2 \ge \dotsc > 0$.
The functions $\{\hat{u}_i\}_{i=1}^\infty$ and $\{\hat{v}_i\}_{i=1}^\infty$ are called left- and right-singular functions, and form orthonormal bases of $\mathcal{Y}$ and $\mathcal{X}$, respectively.
We assume henceforth without loss of generality that all the eigenvalues are distinct, so that $\lambda_1>\lambda_2>\lambda_3>\dotsc>0$.
Assumption \ref{assum:SR} is satisfied for most linear PDEs under standard choices of $\mathcal{X},\mathcal{Y}$.
For later notational convenience, we also define
\[
\mathcal{U}=\left\{u\in\mathcal{Y}\, \middle| \, \text{$u$ solves \eqref{eqn:MPc} with $f\in\mathcal{X}$}\right\}=\left\{\mathcal{S}(f) \, \middle| \, f\in\mathcal{X}\right\}
=\mathrm{span}\{ \hat{u}_i\}_{i=1}^\infty\,.
\]

We need to use the discrete version of the problem in the following sections.
Denoting by $N$ the number of discrete representation of solution as well as the sources, we write the discrete version of  \eqref{eqn:MPc} as
\begin{equation}\label{eqn:eqn_dis}
    \mathsf{L} \mathsf{u} = \mathsf{f} \,,
\end{equation}
where $\mathsf{u}$ and $\mathsf{f} \in \mathbb{R}^N$ are the numerical solution and the discretized source, respectively, and $\mathsf{L} \in \mathbb{R}^{N\times N}$ is the discrete version of the operator $\mathcal{L}$.
To represent the solution operator $\mathcal{S}$ in the discrete setting, we introduce the  Green's matrix $\mathsf{G}\in \Rb^{N\times N}$, the discrete version of Green's function, defined to satisfy
\begin{equation}\label{eqn:Gproperty}
    \mathsf{L}
\cdot \mathsf{G}
=\mathsf{I}_{N}\,,
\end{equation}
where $\mathsf{I}_{N}\in \Rb^{N\times N}$ is the identity matrix.
For a well-posed system, $\mathsf{L}$ is invertible, so that  $\mathsf{G} = \mathsf{L}^{-1}$ is well defined.
To approximate the norms of $\|\cdot\|_{\mathcal{X}}$ and $\|\cdot\|_{\mathcal{Y}}$, the domain and the range for $\mathsf{G}$ both need to be equipped with a specially designed metric.
To do so, we denote for all $ \mathsf{a},\mathsf{b}\in\Rb^{N}$ the (weighted) discrete inner products
\begin{equation}\label{eqn:norm_dis}
\langle \mathsf{a},\mathsf{b}\rangle_{\mathsf{X}}=\mathsf{a}^\top \mathsf{\Pi}_{\mathsf{X}} \mathsf{b},\quad \langle \mathsf{a},\mathsf{b}\rangle_{\mathsf{Y}}=\mathsf{a}^\top \mathsf{\Pi}_{\mathsf{Y}} \mathsf{b}\,,
\end{equation}
and $\|\cdot\|_{\mathsf{X}}$ and $\|\cdot\|_{\mathsf{Y}}$ are the discrete norms associated with these inner products in the finite dimensional spaces $\mathsf{X}$ and $\mathsf{Y}$, respectively, where $\mathsf{\Pi}_{\mathsf{X}}$ and $\mathsf{\Pi}_{\mathsf{Y}}$ are symmetric positive definite matrices which encode the weights that make $\|\cdot\|_{\mathsf{X}}$ and $\|\cdot\|_{\mathsf{Y}}$ resemble $\|\cdot\|_{\mathcal{X}}$ and $\|\cdot\|_{\mathcal{Y}}$, respectively.
As such, the discrete Green's matrix $\mathsf{G}$ is a map from $(\mathbb{R}^N,\|\cdot\|_{\mathsf{X}})$ to $(\mathbb{R}^N,\|\cdot\|_{\mathsf{Y}})$.
Similarly, we represent the adjoint operator $\mathcal{S}^\ast$ by the discrete representor $\mathsf{G}^\ast$, which maps $(\mathbb{R}^N,\|\cdot\|_{\mathsf{Y}})$ to $(\mathbb{R}^N,\|\cdot\|_{\mathsf{X}})$.
An explicit formula for $\mathsf{G}^\ast$ can be derived from the weight matrices $\mathsf{\Pi}_{\mathsf{X}}$ and $\mathsf{\Pi}_{\mathsf{Y}}$.
For any $\mathsf{v}, \mathsf{w}$ in $\mathbb{R}^N$, we have
\[
\mathsf{v}^\top (\mathsf{G}^\ast)^\top \mathsf{\Pi}_{\mathsf{X}} \mathsf{w}
= \underbrace{\langle \mathsf{G}^\ast \mathsf{v}\,,\mathsf{w} \rangle_{\mathsf{X}}
= \langle \mathsf{v}\,,\mathsf{G}\mathsf{w}\rangle_{\mathsf{Y}} }_{\text{definition of adjoint}}
= \mathsf{v}^\top \mathsf{\Pi}_{\mathsf{Y}}\mathsf{G}\mathsf{w}\,,
\]
where we denote $\mathsf{G}^\top$ the transpose of $\mathsf{G}$, so that
\begin{equation}\label{eqn:G_ast}
\mathsf{G}^\ast = \mathsf{\Pi}_{\mathsf{X}}^{-1} \mathsf{G}^\top \mathsf{\Pi}_{\mathsf{Y}}\,.
\end{equation}

We can find the SVD for $\mathsf{G}$ in this setting.
Specifically, we seek a list $\{\hat{\lambda}_i, \mathsf{\hat{u}}_i, \mathsf{\hat{v}}_i\}_{i=1}^N$ so that the basis vectors are orthonormal in the weighted space:
\begin{equation} \label{eqn:SVD_1}
    \mathsf{\hat{u}}_i^\top \mathsf{\Pi}_\mathsf{Y} \mathsf{\hat{u}}_j = \mathsf{\hat{v}}_i^\top \mathsf{\Pi}_\mathsf{X} \mathsf{\hat{v}}_j = \mathsf{\delta}_{ij} \,, \quad i,j = 1,\dots,N \,.
\end{equation}
and that
\begin{equation}\label{eqn:SVD_sfG}
    \mathsf{G}\mathsf{\hat{V}} = \mathsf{\hat{U}} \mathsf{\hat{\Lambda}} \,, \quad \mathsf{G}^\ast \mathsf{\hat{U}} = \mathsf{\hat{V}} \mathsf{\hat{\Lambda}}\,,
\end{equation}
where
\begin{equation}\label{eqn:svd_basis}
\mathsf{\hat{U}} = [\mathsf{\hat{u}}_1,\dots,\mathsf{\hat{u}}_N]\,,\quad \mathsf{\hat{V}} = [\mathsf{\hat{v}}_1,\dots,\mathsf{\hat{v}}_N]\,,\quad\text{and}\quad \mathsf{\hat{\Lambda}} = \diag(\hat{\lambda}_1,\dots,\hat{\lambda}_N)
\end{equation}
consist of the left and right singular vectors and all singular values.
Analogous to the continuous setting, we assume that $\hat{\lambda}_1 > \hat{\lambda}_2 > \dotsc > \hat{\lambda}_N >0$.
Given the weighted norm, $\mathsf{G}$ can be decomposed as
\begin{equation}\label{eqn:sfG_expansion}
    \mathsf{G} = \mathsf{\hat{U}} \mathsf{\hat{\Lambda}} \mathsf{\hat{V}}^\top \mathsf{\Pi}_\mathsf{X} \,,\quad\text{and}\quad \mathsf{G}^\ast = \mathsf{\hat{V}} \mathsf{\hat{\Lambda}} \mathsf{\hat{U}}^\top \mathsf{\Pi}_\mathsf{Y}\,.
\end{equation}

In the following two subsections, we review two ways to obtain optimality of bases: the Kolmogorov $n$-width and the Bayesian viewpoint.

\subsection{Kolmogorov $n$-width}\label{sec:Kolmogorov}
The Kolmogorov $n$-width quantifies the ``effective dimension'' of an operator.
It defines the number and the profile of modes needed to approximate a given operator to within a given tolerance.
We seek to apply this concept to the solution operator $\mathcal{S}$.
As originally defined in~\cite{Kolmogoroff1936UberDB}, the \emph{Kolmogorov $n$-width} (for given $n>0$) is
\begin{equation}\label{eqn:basis}
\mathcal{K}_n (\mathcal{S}) = \min_{v_1,\dots,v_n\in\mathcal{X}} \max_{f\in \mathcal{X}} \min_{c_1,\dots,c_n \in \Rb}\frac{\|\mathcal{S}(f)-\sum^n_{i=1}c_{i}\mathcal{S}(v_i)\|_{\mathcal{Y}}}{\|f\|_{\mathcal{X}}}\,,
\end{equation}
That is, we find among all possible sets of basis functions $\{v_i\}^n_{i=1}$ with $v_i\in\mathcal{X}$ the basis that minimizes the largest relative error between the output $\mathcal{S}(f)$ and its $n$-dimensional approximation $\sum_i c_i\mathcal{S}(v_i)$, where the $c_i$ are optimal projection coefficients.
The relationship to SVD is evident.
Indeed, $\mathcal{K}_n(\mathcal{S})$ is the $(n+1)$-st singular value of $\mathcal{S}$.
\begin{theorem}\label{thm:basis}
Under Assumption~\ref{assum:SR}, for a fixed $n>0$, we have $\mathcal{K}_n(\mathcal{S})=\lambda_{n+1}$, and the minimum in \eqref{eqn:basis} is achieved by the singular vectors $\{\hat{v}_i\}^n_{i=1}$ defined in \eqref{eqn:basisw}.
Equivalently, for any $u\in\mathcal{U}$, we have
\begin{equation}\label{eqn:approximationerrorbound}
\left\|u-\sum^n_{i=1}\lambda_i\hat{c}_{i}\hat{u}_i\right\|_{\mathcal{Y}}\leq \lambda_{n+1}\|\mathcal{L}(u)\|_{\mathcal{X}}\,,
\end{equation}
with $\hat{c}_{i}=\left\langle \mathcal{L}(u),\hat{v}_i\right\rangle_{\mathcal{X}}$.
\end{theorem}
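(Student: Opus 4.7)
The plan is to reduce the Kolmogorov $n$-width problem to a standard SVD best-approximation argument, splitting it into an upper bound (realized by the proposed singular basis $\{\hat{v}_i\}$) and a matching lower bound (valid for any competing basis), then to recover the companion error estimate \eqref{eqn:approximationerrorbound} by specializing the upper bound to $f = \mathcal{L}(u)$.

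For the upper bound, I would plug $v_i = \hat{v}_i$ into the definition \eqref{eqn:basis}. For arbitrary $f \in \mathcal{X}$, Assumption~\ref{assum:SR} lets me expand $f = \sum_{j=1}^\infty \alpha_j \hat{v}_j$ with $\alpha_j = \langle f,\hat{v}_j\rangle_{\mathcal{X}}$ and $\|f\|_{\mathcal{X}}^2 = \sum_j \alpha_j^2$; by \eqref{eqn:basisw} the image is $\mathcal{S}(f) = \sum_j \alpha_j \lambda_j \hat{u}_j$. Choosing $c_i = \alpha_i$ for $i=1,\ldots,n$ kills the first $n$ modes, and the orthonormality of $\{\hat{u}_j\}$ in $\mathcal{Y}$ yields
\[
\Bigl\|\mathcal{S}(f) - \sum_{i=1}^n c_i \mathcal{S}(\hat{v}_i)\Bigr\|_{\mathcal{Y}}^2 = \sum_{j>n} \alpha_j^2 \lambda_j^2 \le \lambda_{n+1}^2 \|f\|_{\mathcal{X}}^2,
\]
so $\mathcal{K}_n(\mathcal{S}) \le \lambda_{n+1}$. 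For the lower bound, I fix arbitrary $v_1,\ldots,v_n \in \mathcal{X}$ and let $V_n := \mathrm{span}\{\mathcal{S}(v_1),\ldots,\mathcal{S}(v_n)\} \subset \mathcal{Y}$, a subspace of dimension at most $n$. Restricting candidate sources to the $(n+1)$-dimensional subspace $W_{n+1} := \mathrm{span}\{\hat{v}_1,\ldots,\hat{v}_{n+1}\}$, I observe that $\mathcal{S}(W_{n+1}) = \mathrm{span}\{\hat{u}_1,\ldots,\hat{u}_{n+1}\}$ is also $(n+1)$-dimensional (all $\lambda_i>0$), so a dimension count forces the existence of a unit-norm $f = \sum_{i=1}^{n+1} \alpha_i \hat{v}_i \in W_{n+1}$ with $\mathcal{S}(f) \perp V_n$ in $\mathcal{Y}$. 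For this $f$ the inner minimum over $c_i$ equals $\|\mathcal{S}(f)\|_{\mathcal{Y}}$, and orthonormality of $\{\hat{u}_i\}$ gives $\|\mathcal{S}(f)\|_{\mathcal{Y}}^2 = \sum_{i=1}^{n+1} \alpha_i^2 \lambda_i^2 \ge \lambda_{n+1}^2 \sum_i \alpha_i^2 = \lambda_{n+1}^2$. Combining the two bounds pins $\mathcal{K}_n(\mathcal{S}) = \lambda_{n+1}$ and identifies $\{\hat{v}_i\}$ as optimal.

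The error bound \eqref{eqn:approximationerrorbound} is then essentially the realized upper bound with $f := \mathcal{L}(u)$, so that $u = \mathcal{S}(f)$ by Assumption~\ref{assum:SR}. The coefficients $\alpha_i = \langle f,\hat{v}_i\rangle_{\mathcal{X}} = \langle \mathcal{L}(u),\hat{v}_i\rangle_{\mathcal{X}}$ are exactly $\hat{c}_i$, and $\lambda_i \hat{u}_i = \mathcal{S}(\hat{v}_i)$ matches the summand in \eqref{eqn:approximationerrorbound}; the same Parseval estimate used above yields the claim. The main obstacle is the dimension-counting step in the lower bound: I must verify that the unit-normalized element of $W_{n+1}$ orthogonal to $V_n$ both exists and lies in $\mathcal{X}$ (not merely in the image space), which is a soft linear-algebra argument but requires explicitly invoking the bijection between $W_{n+1}$ and $\mathcal{S}(W_{n+1})$ provided by injectivity of $\mathcal{S}$ on this finite-dimensional subspace.
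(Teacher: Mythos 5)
Your proof is correct, and its overall structure matches the paper's: an upper bound realized by $\{\hat v_i\}$, a lower bound valid for arbitrary $\{v_i\}$ via a dimension count, and the error estimate as a specialization to $f=\mathcal L(u)$. The one genuine difference is in the lower-bound argument. The paper selects a nontrivial $f\in\mathrm{span}\{\hat v_1,\dots,\hat v_{n+1}\}\cap\mathrm{span}\{v_1,\dots,v_n\}^\perp$, i.e.\ orthogonality is imposed in $\mathcal X$; to conclude one then still has to argue that the $\mathcal Y$-minimum over $c$ cannot dip below $\lambda_{n+1}\|f\|_{\mathcal X}$, which requires projecting $f-\sum c_i v_i$ onto $\mathrm{span}\{\hat v_1,\dots,\hat v_{n+1}\}$ and using $\langle f,P(\sum c_i v_i)\rangle_{\mathcal X}=\langle f,\sum c_i v_i\rangle_{\mathcal X}=0$ to show the projected part has $\mathcal X$-norm at least $\|f\|_{\mathcal X}$ — a step the paper leaves implicit. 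You instead impose orthogonality in $\mathcal Y$: pick $f\in\mathrm{span}\{\hat v_1,\dots,\hat v_{n+1}\}$ with $\mathcal S(f)\perp\mathrm{span}\{\mathcal S(v_i)\}$, so the inner minimum over $c$ is exactly $\|\mathcal S(f)\|_{\mathcal Y}$ and the lower bound drops out immediately from $\lambda_i\ge\lambda_{n+1}$ for $i\le n+1$. Your variant is the classical Kolmogorov argument and avoids the extra projection step; the paper's variant works with an orthogonality constraint in the more natural domain $\mathcal X$ but buys that at the cost of a less transparent inner-minimum estimate. Both are sound, and the conclusion — including that equality holds at $f=\hat v_{n+1}$, so $\mathcal K_n(\mathcal S)=\lambda_{n+1}$ — follows either way.
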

\begin{proof}
Since $u\in\mathcal{U}$, we can find $f$ so that $u = \mathcal{S}(f)$ or equivalently $\mathcal{L}(u) = f$.
We will show that $\mathcal{K}_n(\mathcal{S})$ defined in~\eqref{eqn:basis} is equal to $\lambda_{n+1}$.
Since $\{\hat{v}_i\}^n_{i=1}$ are right-singular vectors of $\mathcal{S}$, and hence eigenvectors of $\mathcal{S}^*\mathcal{S}$ with eigenvalues $\{\lambda_i^2\}^n_{i=1}$, we have
\[
\left\|\mathcal{S}(f)-\sum^n_{i=1} \hat{c}_i \mathcal{S}(\hat{v}_i)\right\|^2_{\mathcal{Y}}
=\left\langle f-\sum^n_{i=1}\hat{c}_{i}\hat{v}_i,\mathcal{S}^*\mathcal{S}\left(f-\sum^n_{i=1}\hat{c}_i\hat{v}_i\right)\right\rangle_{\mathcal{X}}
= \sum_{i=n+1}^\infty \lambda_i^2 \hat{c}_i^2 \leq \lambda_{n+1}^2 \|f\|^2_{\mathcal{X}}\,.
\]
Equality in the final step is achieved by setting $f=\hat{v}_{n+1}$.
Therefore, by the definition of the Kolmogorov $n$-width, given $\hat{u}_i = \mathcal{S}(\hat{v}_i)$, we have
\[
\mathcal{K}_n(\mathcal{S})
\leq \max_{f\in \mathcal{X}}\min_{c_1,\dots,c_n \in \Rb}\frac{\|\mathcal{S}(f)-\sum^n_{i=1}\lambda_i c_{i}\hat{u}_i\|_{\mathcal{Y}}}{\|f\|_{\mathcal{X}}}
=\lambda_{n+1}  \,,
\]
so we have $\mathcal{K}_n(\mathcal{S})\leq\lambda_{n+1}$.
To show equality, it suffices to prove the $\{\hat{v}_i\}^n_{i=1}$ is the optimal basis, meaning, for any collection of functions $\{v_i\}^n_{i=1}$ with $v_i\in\mathcal{X}$:
\[
\max_{f\in \mathcal{X}}\min_{c_1,\dots,c_n \in \Rb}\frac{\|\mathcal{S}(f)-\sum^n_{i=1}c_{i}\mathcal{S}(v_i)\|_{\mathcal{Y}}}{\|f\|_{\mathcal{X}}}\geq \lambda_{n+1}\,.
\]
Indeed, it is always possible to choose a nontrivial function
$f \in \mathrm{span}\{\hat{v}_i\,,i=1\,,\cdots,n+1\} \, \cap \, \mathrm{span}\{v_i\,,i=1\,,\cdots,n\}^\perp$  such that the function $f_n = f-\sum_{i=1}^n c_i v_i$ has non-trivial components in $\mathrm{span}\{\hat{v}_i\,,i=1\,,\cdots,n+1\}$, making the quotient no less than $\lambda_{n+1}$.
~\eqref{eqn:approximationerrorbound} is obtained by substituting $u=\mathcal{S}(f)$ and $f=\mathcal{L}(u)$.
\end{proof}

In the discrete setting, the Kolmogorov $n$-width of the operator $\mathsf{G}$ is defined by
\begin{equation}\label{eqn:nwidth_dis}
\mathcal{K}_n(\mathsf{G}) = \min_{\mathsf{V}_n\in\mathbb{R}^{N\times n}}\max_{\mathsf{f}\in \mathbb{R}^\mathrm{N}}\min_{\mathsf{c}\in\Rb^n}\frac{\|\mathsf{G}\mathsf{f} - \mathsf{G} \mathsf{V}_n \mathsf{c} \|_\mathsf{Y}}{\|\mathsf{f}\|_\mathsf{X}}\,,
\end{equation}
where $\mathsf{c} = [c_1,\dots,c_n]^\top$. The analog of Theorem~\ref{thm:basis} is as follows.
\begin{theorem}
For $n \le N$, the Kolmogorov $n$-width~\eqref{eqn:nwidth_dis}
of the operator $\mathsf{G}$ defined in \eqref{eqn:Gproperty} is obtained from the SVD \eqref{eqn:SVD_1}, \eqref{eqn:SVD_sfG}, and  we have $\mathcal{K}_n(\mathsf{G}) = \hat{\lambda}_{n+1}$.
Moreover,
\[
\hat{\sfV}_n = \argmin_{\mathsf{V}_n\in\mathbb{R}^{N\times n}} \, \max_{\mathsf{f}\in \mathbb{R}^\mathrm{N}} \, \min_{\mathsf{c}\in\Rb^n} \, \frac{\|\mathsf{G}\mathsf{f} - \mathsf{G} \mathsf{V}_n \mathsf{c} \|_\mathsf{Y}}{\|\mathsf{f}\|_\mathsf{X}}\,,
\]
where $\hat{\sfV}_n$ collects the first $n$ columns of the matrix $\sfV$ from \eqref{eqn:svd_basis}. Finally, for any $\mathsf{f}\in\mathbb{R}^N$, defining $\mathsf{\hat{c}} = [\left\langle \mathsf{f},\mathsf{\hat{v}}_1\right\rangle_{\mathsf{X}},\dots, \left\langle \mathsf{f},\mathsf{\hat{v}}_n\right\rangle_{\mathsf{X}}]^\top$ and $\mathsf{\hat{\Lambda}}_n = \diag(\hat{\lambda}_1,\dots,\hat{\lambda}_n)$, we have
\[
\left\| \mathsf{u} - \mathsf{\hat{U}}_n \mathsf{\hat{\Lambda}}_n \mathsf{\hat{c}} \right\|_{\mathsf{Y}}
\leq \hat{\lambda}_{n+1} \|\mathsf{f}\|_{\mathsf{X}}\,.
\]
where $\hat{\sfU}_n$ collects the first $n$ columns in $\hat{\sfU}$ defined in \eqref{eqn:svd_basis}.
\end{theorem}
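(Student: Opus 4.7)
The plan is to mirror the proof of Theorem~\ref{thm:basis} in the discrete, weighted setting, using the $\mathsf{X}$- and $\mathsf{Y}$-orthonormality \eqref{eqn:SVD_1} together with the SVD identities \eqref{eqn:SVD_sfG}. I would split the argument into three parts: an upper bound $\mathcal{K}_n(\mathsf{G})\leq\hat{\lambda}_{n+1}$ obtained by plugging in $\hat{\sfV}_n$, a matching lower bound obtained by dimension counting in the output space, and a direct derivation of the error bound from the upper-bound computation.

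For the upper bound, I would expand an arbitrary $\mathsf{f}\in\mathbb{R}^N$ in the $\mathsf{X}$-orthonormal basis as $\mathsf{f}=\sum_{i=1}^N \hat{c}_i\, \mathsf{\hat{v}}_i$ with $\hat{c}_i=\langle\mathsf{f},\mathsf{\hat{v}}_i\rangle_\mathsf{X}$ and $\|\mathsf{f}\|_\mathsf{X}^2=\sum_i\hat{c}_i^2$. Taking $\mathsf{c}=(\hat{c}_1,\dots,\hat{c}_n)^\top$ and using $\mathsf{G}\mathsf{\hat{v}}_i=\hat{\lambda}_i\mathsf{\hat{u}}_i$ together with the $\mathsf{Y}$-orthonormality of $\{\mathsf{\hat{u}}_i\}$ gives
\[
\|\mathsf{G}\mathsf{f}-\mathsf{G}\hat{\sfV}_n\mathsf{c}\|_\mathsf{Y}^2 \;=\; \sum_{i>n}\hat{\lambda}_i^2\hat{c}_i^2 \;\leq\; \hat{\lambda}_{n+1}^2\|\mathsf{f}\|_\mathsf{X}^2,
\]
hence $\mathcal{K}_n(\mathsf{G})\leq\hat{\lambda}_{n+1}$. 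Since $\mathsf{u}=\mathsf{G}\mathsf{f}=\sum_i\hat{\lambda}_i\hat{c}_i\mathsf{\hat{u}}_i$ and $\hat{\sfU}_n\hat{\sfTheta}$-free computation gives $\hat{\sfU}_n\mathsf{\hat{\Lambda}}_n\hat{\mathsf{c}}=\sum_{i\leq n}\hat{\lambda}_i\hat{c}_i\mathsf{\hat{u}}_i$, the very same identity yields the stated error bound.

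For the lower bound, given any $\mathsf{V}_n\in\mathbb{R}^{N\times n}$, I would work in the output space. The column space of $\mathsf{G}\mathsf{V}_n$ has dimension at most $n$, whereas $\mathrm{span}\{\mathsf{\hat{u}}_1,\dots,\mathsf{\hat{u}}_{n+1}\}$ has dimension $n+1$, so by dimension counting there exists a unit-$\mathsf{Y}$-norm vector $\mathsf{y}=\sum_{i=1}^{n+1}\mu_i\mathsf{\hat{u}}_i$ that is $\mathsf{Y}$-orthogonal to $\mathrm{Range}(\mathsf{G}\mathsf{V}_n)$. Setting $\mathsf{f}_\ast=\sum_{i=1}^{n+1}(\mu_i/\hat{\lambda}_i)\mathsf{\hat{v}}_i$ produces $\mathsf{G}\mathsf{f}_\ast=\mathsf{y}$, and since $\hat{\lambda}_i\geq\hat{\lambda}_{n+1}$ for $i\leq n+1$ we have $\|\mathsf{f}_\ast\|_\mathsf{X}^2=\sum_{i=1}^{n+1}\mu_i^2/\hat{\lambda}_i^2 \leq \hat{\lambda}_{n+1}^{-2}$. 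Pythagoras in the $\mathsf{Y}$-inner product then gives $\|\mathsf{G}\mathsf{f}_\ast-\mathsf{G}\mathsf{V}_n\mathsf{c}\|_\mathsf{Y}^2 = 1+\|\mathsf{G}\mathsf{V}_n\mathsf{c}\|_\mathsf{Y}^2\geq1$ for every $\mathsf{c}$, so the ratio in \eqref{eqn:nwidth_dis} is at least $\hat{\lambda}_{n+1}$. Combining the two bounds yields $\mathcal{K}_n(\mathsf{G})=\hat{\lambda}_{n+1}$ and identifies $\hat{\sfV}_n$ as a minimizer.

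The only real subtlety I anticipate is in the lower bound: performing the dimension count in the output space, against the $\mathsf{Y}$-orthogonal complement of $\mathrm{Range}(\mathsf{G}\mathsf{V}_n)$, is essential. An $\mathsf{X}$-orthogonal construction of $\mathsf{f}_\ast$ against $\mathrm{Range}(\mathsf{V}_n)$ would not interact cleanly with the non-isometric action of $\mathsf{G}$ in the weighted norms, whereas the output-side construction reduces the argument to a single application of Pythagoras. Everything else is standard bookkeeping with the weighted SVD expansion \eqref{eqn:sfG_expansion}.
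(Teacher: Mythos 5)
Your proof is correct and, as the paper itself instructs (``the proof tracks that of the previous result''), it mirrors the structure of the proof of Theorem~\ref{thm:basis}: an upper bound by substituting $\hat{\sfV}_n$ and expanding in the weighted SVD, a lower bound by dimension counting, and the pointwise error bound falling out of the upper-bound computation. The upper bound and error bound are handled exactly as in the paper.

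Where you genuinely diverge is in the lower bound. The paper's Theorem~\ref{thm:basis} chooses $f$ in the \emph{input} space $\mathcal{X}$-orthogonal to $\mathrm{span}\{v_i\}$; you instead choose a target $\mathsf{y}$ in the \emph{output} space that is $\mathsf{Y}$-orthogonal to $\mathrm{Range}(\mathsf{G}\mathsf{V}_n)$ and pull it back through $\mathsf{G}^{-1}$ on the top singular subspace. Both routes work, but they lean on different inequalities. The output-space version reduces at once to $\mathsf{Y}$-Pythagoras, which is why your argument closes cleanly. The input-space version as sketched in the paper is terse: with $f\perp_{\mathcal{X}} v_i$ and $f\in\mathrm{span}\{\hat v_1,\dots,\hat v_{n+1}\}$, one still needs an extra Cauchy--Schwarz step (namely $\sum_{j\le n+1}\langle g,\hat v_j\rangle_\mathcal{X}^2\ge\|f\|_\mathcal{X}^2$ from $\langle g,f\rangle_\mathcal{X}=\|f\|_\mathcal{X}^2$, where $g=f-\sum c_i v_i$) to conclude $\|\mathcal{S}g\|_\mathcal{Y}\ge\lambda_{n+1}\|f\|_\mathcal{X}$; the paper leaves this implicit. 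So your closing remark is slightly too strong: the $\mathsf{X}$-orthogonal construction \emph{can} be made to work even in the weighted setting, but you are right that it requires an additional argument to interact with $\mathsf{G}$, whereas the output-side construction avoids it entirely. On balance, your lower bound is the more transparent of the two and is a reasonable choice for filling in the omitted proof.
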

The proof tracks that of the previous result, so we omit it.
Special attention must be paid in the proof to the weight matrices $\mathsf{\Pi}_\mathsf{X}$ and $\mathsf{\Pi}_\mathsf{Y}$.

\subsection{A Bayesian perspective on numerical PDEs}\label{sec:Bayesian}
The perspective the Bayesian homogenization takes is the following. Suppose one does not have the full knowledge of $f$ but can nevertheless ``observe" the solution through a couple of experiments, can we reconstruct the full solution? To execute this idea, we consider only the discrete problem and assume $\mathsf{\Pi}_\mathsf{X}, \mathsf{\Pi}_\mathsf{Y}$ are both identity matrices for simplicity. We first place a prior on the source term $\mathsf{f}$ in~\eqref{eqn:eqn_dis} and consider the stochastic equation
\begin{equation}\label{eqn:MP1B}
\mathsf{L} \mathsf{u} = \mathsf{\xi}\,.
\end{equation}
Different prior knowledge of $\mathsf{f}$ would give different conditions on $\mathsf{\xi}$. Suppose one chooses to set the prior as a random field on the grid points with i.i.d. standard Gaussian components, then $\mathsf{\xi}$ would approximate white noise. In other words, we place a Gaussian prior on the source term. Due to the linearity of $\mathsf{L}$, the solution $\mathsf{u}$ is also a Gaussian field, and the covariance for $\mathsf{f}$ uniquely determines the covariance of $\mathsf{u}$. In particular, we have
\[
p(\mathsf{f}) \propto \exp\left(- \frac{1}{2} \mathsf{f}^\top \mathsf{f} \right)\;\Rightarrow\; p(\mathsf{u}) \propto \exp\left(- \frac{1}{2} \mathsf{u}^\top \mathsf{C}^{-1} \mathsf{u} \right)\,, \quad \mbox{where} \; \mathsf{C} = (\sfC_{ij})_{N\times N} = \mathsf{G} \mathsf{G}^\top
\]
is the covariance.

Suppose we ``observe'' the solution $n$ times, with a linearly independent set of test vector $\mathsf{m}_i \in \mathbb{R}^N$. Then we have knowledge of
\begin{equation}\label{def:Psi}
\mathsf{\Psi} = \mathsf{M}^\top \mathsf{u} \,,
\end{equation}
where $\mathsf{M} = [\mathsf{m}_1, \dots, \mathsf{m}_n] \in \mathbb{R}^{N\times n}$.
Conditioned on these observations, we can expect to have a better understanding of the solution. To derive the distribution conditioned on the observed data, we use the Bayes' formula. We assume the data likelihood to have the following form
\[
p(\mathsf{\Psi} | \mathsf{u}) \propto \exp\left( -\frac{1}{2\delta} (\mathsf{\Psi} - \mathsf{M}^\top \mathsf{u})^\top (\mathsf{\Psi} - \mathsf{M}^\top \mathsf{u}) \right)
\]
where $\delta>0$ is the variance. Bayes' formula tells us
\begin{equation}
p(\mathsf{u} | \mathsf{\Psi}) \propto p(\mathsf{\Psi} | \mathsf{u}) p(\mathsf{u}) \\
    \propto \exp\left(  -\frac{1}{2} ( \mathsf{u} - \mathsf{\mu} )^\top \mathsf{\Sigma}^{-1} ( \mathsf{u} - \mathsf{\mu} ) \right)
\end{equation}
where the mean and covariance are respectively:
\begin{align}
    \mathsf{\mu} &= \mathsf{\delta}^{-1} \mathsf{\Sigma} \mathsf{M} \mathsf{\Psi} = \mathsf{K}^\top ( \mathsf{\Theta} + \delta \mathsf{I}_n )^{-1} \mathsf{\Psi}, \\
    \mathsf{\Sigma} &= ( \mathsf{C}^{-1} + \delta^{-1} \mathsf{M} \mathsf{M}^\top )^{-1}
    = \mathsf{C} - \mathsf{K}^\top (\mathsf{\Theta} + \delta \mathsf{I}_n )^{-1} \mathsf{K}\,, \label{eqn:sigma_formula}
\end{align}
where
\begin{equation}\label{def:sfK_sfTheta}
\mathsf{K} = \mathsf{M}^\top \mathsf{C} \in \mathbb{R}^{n\times N}\,,\quad\mathsf{\Theta} = \mathsf{M}^\top \mathsf{C} \mathsf{M} \in \mathbb{R}^{n\times n}\,,
\end{equation}
and we applied the Sherman-Morrison-Woodbury identity in the second equality in~\eqref{eqn:sigma_formula}.

The value of $\delta$ essentially represents the ``trust'' we place on the reading of the observation. Suppose the reading is fully accurate (no observation error), we send $\delta\to0$ and obtain
\begin{equation}\label{eqn:mean_variance_delta0}
\begin{aligned}
    \mathsf{\Sigma}
    \xrightarrow{\delta\to0}
    \Sigma_{\delta=0} & = \mathsf{C} - \mathsf{K}^\top \mathsf{\Theta}^{-1} \mathsf{K}\,, \\
    \mathsf{\mu} \xrightarrow{\delta\to0}
    \mu_{\delta=0} & = \mathsf{K}^\top \mathsf{\Theta}^{-1} \mathsf{\Psi}
    \triangleq \mathsf{W} \cdot \mathsf{\Psi},
\end{aligned}
\end{equation}
with $\mathsf{W} = \mathsf{K}^\top \mathsf{\Theta}^{-1}$.
This means that if the given source $\mathsf{f}$ is a i.i.d. Gaussian random field and we make observation $\Psi$ and  fully trust the data, we already know that the solution should be $\mu$ on average, with confidence interval described by $\mathsf{\Sigma}$.

To make this argument more concrete, we cite the following theorem.
\begin{theorem}[\cite{doi:10.1137/140974596}, Theorems 3.5, 5.1] \label{thm:OW_dis} Let $\mathsf{u}$ be the solution of \eqref{eqn:MP1B} for a Gaussian random field $\mathsf{f}$ with i.i.d. Gaussian components, then $\mathsf{u}$ conditioned on the value of $\mathsf{\Psi}$ is a Gaussian random variable with explicitly computable mean and variance defined in~\eqref{eqn:mean_variance_delta0}.
Moreover, the reconstructed solution $\hat{\mathsf{u}}=\mathsf{W} \cdot \mathsf{\Psi}$ satisfies
\begin{equation}\label{eqn:errorboundB_dis}
\left\| \mathsf{u} - \mathsf{W} \cdot \mathsf{\Psi} \right\|_2 \leq \sqrt{\mathrm{Tr}(\mathsf{\Sigma})} \|\mathsf{f}\|_2 \,.
\end{equation}
\end{theorem}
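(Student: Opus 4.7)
The plan is to handle the two assertions separately: first identify the conditional law of $\mathsf{u}$ given $\mathsf{\Psi}$, then derive the deterministic error bound by a singular-value/trace comparison.

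For the first assertion, I would observe that since $\mathsf{f}$ is a standard Gaussian vector and $\mathsf{u}=\mathsf{G}\mathsf{f}$, the pair $(\mathsf{u},\mathsf{\Psi})=(\mathsf{G}\mathsf{f},\mathsf{M}^\top \mathsf{G}\mathsf{f})$ is jointly Gaussian with mean zero and block covariance
\begin{equation*}
\mathrm{Cov}(\mathsf{u})=\mathsf{C},\qquad \mathrm{Cov}(\mathsf{u},\mathsf{\Psi})=\mathsf{C}\mathsf{M}=\mathsf{K}^\top,\qquad \mathrm{Cov}(\mathsf{\Psi})=\mathsf{M}^\top \mathsf{C}\mathsf{M}=\mathsf{\Theta}.
\end{equation*}
The conditional distribution of a jointly Gaussian vector is Gaussian with the classical formulas $\mathrm{E}[\mathsf{u}\mid \mathsf{\Psi}]=\mathrm{Cov}(\mathsf{u},\mathsf{\Psi})\mathrm{Cov}(\mathsf{\Psi})^{-1}\mathsf{\Psi}=\mathsf{K}^\top\mathsf{\Theta}^{-1}\mathsf{\Psi}$ and $\mathrm{Cov}(\mathsf{u}\mid\mathsf{\Psi})=\mathsf{C}-\mathsf{K}^\top\mathsf{\Theta}^{-1}\mathsf{K}$, matching $\mu_{\delta=0}$ and $\Sigma_{\delta=0}$ from~\eqref{eqn:mean_variance_delta0}. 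Alternatively, one may bypass the joint-Gaussian derivation by passing to the $\delta\to0$ limit in the Bayes computation already carried out above the theorem.

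For the error bound, I would write $\mathsf{u}-\mathsf{W}\mathsf{\Psi}=\mathsf{P}\mathsf{u}=\mathsf{P}\mathsf{G}\mathsf{f}$, where $\mathsf{P}:=\mathsf{I}_N-\mathsf{W}\mathsf{M}^\top=\mathsf{I}_N-\mathsf{C}\mathsf{M}\mathsf{\Theta}^{-1}\mathsf{M}^\top$ (an oblique projector along $\mathrm{range}(\mathsf{C}\mathsf{M})$). The key algebraic identity is
\begin{equation*}
\mathsf{P}\mathsf{C}\mathsf{P}^\top=\mathsf{C}-\mathsf{C}\mathsf{M}\mathsf{\Theta}^{-1}\mathsf{M}^\top\mathsf{C}-\mathsf{C}\mathsf{M}\mathsf{\Theta}^{-1}\mathsf{M}^\top\mathsf{C}+\mathsf{C}\mathsf{M}\mathsf{\Theta}^{-1}\mathsf{\Theta}\mathsf{\Theta}^{-1}\mathsf{M}^\top\mathsf{C}=\mathsf{C}-\mathsf{K}^\top\mathsf{\Theta}^{-1}\mathsf{K}=\mathsf{\Sigma},
\end{equation*}
where the cancellation uses $\mathsf{M}^\top\mathsf{C}\mathsf{M}=\mathsf{\Theta}$. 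Since $\mathsf{C}=\mathsf{G}\mathsf{G}^\top$, this rewrites as $(\mathsf{P}\mathsf{G})(\mathsf{P}\mathsf{G})^\top=\mathsf{\Sigma}$, so the squared operator norm of $\mathsf{P}\mathsf{G}$ equals the largest eigenvalue of $\mathsf{\Sigma}$.

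Putting these together,
\begin{equation*}
\|\mathsf{u}-\mathsf{W}\mathsf{\Psi}\|_2=\|\mathsf{P}\mathsf{G}\mathsf{f}\|_2\leq \|\mathsf{P}\mathsf{G}\|_{2\to2}\,\|\mathsf{f}\|_2=\sqrt{\lambda_{\max}(\mathsf{\Sigma})}\,\|\mathsf{f}\|_2\leq\sqrt{\mathrm{Tr}(\mathsf{\Sigma})}\,\|\mathsf{f}\|_2,
\end{equation*}
using that $\mathsf{\Sigma}$ is positive semidefinite so $\lambda_{\max}(\mathsf{\Sigma})\leq \mathrm{Tr}(\mathsf{\Sigma})$. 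The main obstacle here is keeping the algebra tidy for the identity $\mathsf{P}\mathsf{C}\mathsf{P}^\top=\mathsf{\Sigma}$; once that is in hand, both the distributional and deterministic statements fall out essentially by linear algebra. No analytic subtlety arises because the problem is finite-dimensional and $\mathsf{\Theta}$ is invertible (the $\mathsf{m}_i$ are linearly independent and $\mathsf{C}=\mathsf{G}\mathsf{G}^\top$ is positive definite since $\mathsf{G}=\mathsf{L}^{-1}$).
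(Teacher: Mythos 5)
Your proof is correct, and for the error bound it takes a mildly different route from the paper's. The paper bounds $\|\mathsf{u}-\mathsf{W}\mathsf{\Psi}\|_2 = \|(\mathsf{I}-\mathsf{K}^\top\mathsf{\Theta}^{-1}\mathsf{M}^\top)\mathsf{G}\,\mathsf{f}\|_2$ directly by the Frobenius norm $\|(\mathsf{I}-\mathsf{K}^\top\mathsf{\Theta}^{-1}\mathsf{M}^\top)\mathsf{G}\|_{\mathrm F}\,\|\mathsf{f}\|_2$ and then expands that Frobenius norm squared as a trace, with the cross and squared terms each collapsing to $\mathrm{Tr}[\mathsf{K}^\top\mathsf{\Theta}^{-1}\mathsf{K}]$, giving $\mathrm{Tr}(\mathsf{\Sigma})$ at once. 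You instead establish the clean matrix identity $\mathsf{P}\mathsf{C}\mathsf{P}^\top=\mathsf{\Sigma}$ for the oblique projector $\mathsf{P}=\mathsf{I}-\mathsf{C}\mathsf{M}\mathsf{\Theta}^{-1}\mathsf{M}^\top$, recognize $(\mathsf{P}\mathsf{G})(\mathsf{P}\mathsf{G})^\top=\mathsf{\Sigma}$, and bound by the spectral norm, getting the \emph{sharper} intermediate estimate $\|\mathsf{u}-\mathsf{W}\mathsf{\Psi}\|_2\leq\sqrt{\lambda_{\max}(\mathsf{\Sigma})}\,\|\mathsf{f}\|_2$ before relaxing to $\sqrt{\mathrm{Tr}(\mathsf{\Sigma})}$ via $\lambda_{\max}\leq\mathrm{Tr}$ for $\mathsf{\Sigma}\succeq0$. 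Both arguments hinge on the same underlying fact — that the error operator $\mathsf{P}\mathsf{G}$ has Gram matrix $\mathsf{\Sigma}$ — but your spectral-norm route makes it explicit that the Frobenius bound is not tight and that $\sqrt{\lambda_{\max}(\mathsf{\Sigma})}$ suffices, while the paper's Frobenius computation is a direct trace bookkeeping exercise that avoids needing the projector identity in its full form. You also fill in the joint-Gaussian conditioning derivation for the first assertion, which the paper leaves implicit (it is contained in the $\delta\to0$ Bayes computation preceding the theorem).
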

\begin{proof}
The inequality follows directly from the definition of $\mathsf{\Sigma}$:
\begin{equation}
    \left\| \mathsf{u} - \mathsf{W} \cdot \mathsf{\Psi} \right\|_2 = \left\| (\mathsf{G} - \mathsf{K}^\top \mathsf{\Theta}^{-1} \mathsf{M}^\top  \mathsf{G}) \mathsf{f} \right\|_2 \leq \left\| \mathsf{G} - \mathsf{K}^\top \mathsf{\Theta}^{-1} \mathsf{M}^\top \mathsf{G} \right\|_\mathrm{F}  \left\|\mathsf{f} \right\|_2 = \sqrt{\mathrm{Tr}(\mathsf{\Sigma})} \|\mathsf{f}\|_2 \,,
\end{equation}
In the last equality, we use the following computations
\begin{equation}
    \begin{aligned}
    \left\| \mathsf{G} - \mathsf{K}^\top \mathsf{\Theta}^{-1} \mathsf{M}^\top \mathsf{G} \right\|_\mathrm{F}^2
    =& \mathrm{Tr}\left[ (\mathsf{G} - \mathsf{K}^\top \mathsf{\Theta}^{-1} \mathsf{M}^\top \mathsf{G})^\top (\mathsf{G} - \mathsf{K}^\top \mathsf{\Theta}^{-1} \mathsf{M}^\top \mathsf{G}) \right] \\
    =& \mathrm{Tr}\left[ \mathsf{G} \mathsf{G}^\top \right] - \mathrm{Tr}\left[ \mathsf{K}^\top \mathsf{\Theta}^{-1} \mathsf{K} \right]
    = {\mathrm{Tr}(\Sigma)},
    \end{aligned}
\end{equation}
where in the second equality, we used the fact that
\[
\mathrm{Tr}\left[(\mathsf{K}^\top \mathsf{\Theta}^{-1} \mathsf{M}^\top \mathsf{G})^\top \mathsf{G}\right]= \mathrm{Tr}\left[\mathsf{G}^\top (\mathsf{K}^\top \mathsf{\Theta}^{-1} \mathsf{M}^\top \mathsf{G})\right]=\mathrm{Tr}\left[\sfK\sfTheta^{-1}\sfK\right]
\]
and that
\[
\mathrm{Tr}\left[(\mathsf{K}^\top \mathsf{\Theta}^{-1} \mathsf{M}^\top \mathsf{G})^\top (\mathsf{K}^\top \mathsf{\Theta}^{-1} \mathsf{M}^\top \mathsf{G})\right] = \mathrm{Tr}\left[\mathsf{K}^\top \mathsf{\Theta}^{-1} \mathsf{M}^\top \mathsf{G} \mathsf{G}^\top \mathsf{M}\mathsf{\Theta}^{-1}\mathsf{K}\right] =\mathrm{Tr}\left[\sfK\sfTheta^{-1}\sfK\right]\,.
\]

\end{proof}
This beautiful viewpoint translates the solving of a PDE to the mapping between two Gaussian fields. Furthermore, the equation~\eqref{eqn:errorboundB_dis} suggests that different sets of ``observations" can extract different amounts of information. The ``best" observations should be the ones that minimize the trace of the covariance. In the most extreme case, when  $\mathrm{Tr}(\mathsf{\Sigma}) \approx 0$, the system is deterministic, and there are no extra uncertainties, meaning that the observation tell the full story of $\mathsf{u}$. We would like to find a set of observables $\mathsf{\Psi}$ that reveal as much information as possible about $\mathsf{u}$
That is, given a budget of $n$  observations, we seek the $n$ test functions that minimize $\mathrm{Tr}(\mathsf{\Sigma})$. These optimal test functions are the columns of $\mathsf{\hat{M}}$ defined by
\begin{equation}\label{eqn:optimal_dis}
\mathsf{\hat{M}}
=\argmin\limits_{\substack{\mathsf{M}\in\mathbb{R}^{N\times n} \\ \mathrm{rank}(\mathsf{M})=n}} \mathrm{Tr}\left[\mathsf{C} - \mathsf{K}^\top \mathsf{\Theta}^{-1} \mathsf{K}\right]
=\argmax\limits_{\substack{\mathsf{M}\in\mathbb{R}^{N\times n} \\ \mathrm{rank}(\mathsf{M})=n}} \mathrm{Tr}\left[\mathsf{K}^\top \mathsf{\Theta}^{-1} \mathsf{K}\right]\,.
\end{equation}

\subsection{Connection}

While the two definitions of optimality are motivated differently, we argue in this section that they coincide. Namely, the Kolmogorov $n$-width optimization strategy and the variance minimization strategy essentially yield the same sets of basis functions.

\begin{theorem}\label{thm:equivalence}
The optimization problems \eqref{eqn:nwidth_dis} and~\eqref{eqn:optimal_dis} are equivalent in the following sense:
\begin{itemize}
\setlength{\itemindent}{2em}
    \item[]{$\bullet$} The left singular vectors defined in~\Cref{thm:OW_dis} provide one set of optimal ``observations" for~\eqref{eqn:optimal_dis}, namely:
\begin{equation}
\mathsf{\hat{M}} = \mathsf{\hat{U}}_n = \left[ \hat{\mathsf{u}}_1,\dots, \hat{\mathsf{u}}_n \right]\,,
\end{equation}
solves \eqref{eqn:optimal_dis}. In this setting, recall \eqref{def:Psi} and \eqref{eqn:mean_variance_delta0}, we also have
\begin{equation}\label{eqn:opt_measure_func}
\mathsf{w}_i = \mathsf{\hat{u}}_i, \quad \Psi_i = \hat{\lambda}_i \langle \mathsf{f}, \mathsf{\hat{v}}_i \rangle \,.
\end{equation}
    \item[]{$\bullet$} The optimal observation provides the left singular vectors for $\mathsf{G}$. Namely, for the solution $\mathsf{\hat{M}}$ of~\eqref{eqn:optimal_dis}, we have
\begin{equation}\label{eqn:optimal_M_cond}
    \mathrm{Span}(\hat{\sfU}_n) = \mathrm{Span}(\mathsf{\hat{M}})\,.
\end{equation}
\end{itemize}
\end{theorem}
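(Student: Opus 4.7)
The Bayesian section sets $\mathsf{\Pi}_\mathsf{X} = \mathsf{\Pi}_\mathsf{Y} = \mathsf{I}_N$, so~\eqref{eqn:G_ast} gives $\mathsf{G}^\ast = \mathsf{G}^\top$ and~\eqref{eqn:SVD_sfG} reduces to the standard SVD $\mathsf{G} = \hat{\mathsf{U}}\hat{\mathsf{\Lambda}}\hat{\mathsf{V}}^\top$ with $\hat{\mathsf{U}},\hat{\mathsf{V}}$ orthogonal. Therefore $\mathsf{C} = \mathsf{G}\mathsf{G}^\top = \hat{\mathsf{U}}\hat{\mathsf{\Lambda}}^2\hat{\mathsf{U}}^\top$, and each $\hat{\mathsf{u}}_i$ is an eigenvector of $\mathsf{C}$ with eigenvalue $\hat{\lambda}_i^2$. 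To verify the first bullet I would substitute $\mathsf{M} = \hat{\mathsf{U}}_n$ directly into~\eqref{def:sfK_sfTheta}: the eigen-relation $\hat{\mathsf{u}}_i^\top\mathsf{C} = \hat{\lambda}_i^2\hat{\mathsf{u}}_i^\top$ produces $\mathsf{K} = \hat{\mathsf{\Lambda}}_n^2\hat{\mathsf{U}}_n^\top$ and $\mathsf{\Theta} = \hat{\mathsf{\Lambda}}_n^2$, whence $\mathsf{W} = \mathsf{K}^\top\mathsf{\Theta}^{-1} = \hat{\mathsf{U}}_n$, i.e.\ $\mathsf{w}_i = \hat{\mathsf{u}}_i$. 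The corresponding observation reads $\Psi_i = \hat{\mathsf{u}}_i^\top \mathsf{G}\mathsf{f} = \hat{\lambda}_i \langle\mathsf{f},\hat{\mathsf{v}}_i\rangle$, matching~\eqref{eqn:opt_measure_func}.

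For the second bullet I plan to recast the maximization in~\eqref{eqn:optimal_dis} as a trace-of-projection problem and invoke the Ky Fan maximum principle. Writing $\mathsf{C}^{1/2} = \hat{\mathsf{U}}\hat{\mathsf{\Lambda}}\hat{\mathsf{U}}^\top$ (well-defined since $\mathsf{G}$ is invertible) and substituting $\tilde{\mathsf{M}} = \mathsf{C}^{1/2}\mathsf{M}$, cyclic use of the trace yields
\begin{equation*}
\mathrm{Tr}\bigl[\mathsf{K}^\top \mathsf{\Theta}^{-1} \mathsf{K}\bigr] = \mathrm{Tr}\bigl[\mathsf{C}^{1/2}\tilde{\mathsf{M}}(\tilde{\mathsf{M}}^\top\tilde{\mathsf{M}})^{-1}\tilde{\mathsf{M}}^\top\mathsf{C}^{1/2}\bigr] = \mathrm{Tr}\bigl[\mathsf{P}_{\tilde{\mathsf{M}}}\mathsf{C}\bigr],
\end{equation*}
where $\mathsf{P}_{\tilde{\mathsf{M}}}$ is the Euclidean orthogonal projection onto $\mathrm{Range}(\tilde{\mathsf{M}})$. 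Ky Fan's theorem then bounds $\mathrm{Tr}\bigl[\mathsf{P}_{\tilde{\mathsf{M}}}\mathsf{C}\bigr] \le \sum_{i=1}^n \hat{\lambda}_i^2$, with equality if and only if $\mathrm{Range}(\tilde{\mathsf{M}})$ coincides with the top-$n$ eigenspace of $\mathsf{C}$; the strict separation $\hat{\lambda}_n > \hat{\lambda}_{n+1}$ stipulated after~\eqref{eqn:svd_basis} guarantees that this eigenspace, namely $\mathrm{Span}(\hat{\mathsf{U}}_n)$, is unique.

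It then remains to pull the range condition on $\tilde{\mathsf{M}}$ back to $\mathsf{M}$. Since $\mathsf{C}^{1/2}$ is invertible and leaves $\mathrm{Span}(\hat{\mathsf{U}}_n)$ invariant (each $\hat{\mathsf{u}}_i$ being one of its eigenvectors), the condition $\mathrm{Range}(\tilde{\mathsf{M}}) = \mathrm{Span}(\hat{\mathsf{U}}_n)$ is equivalent to $\mathrm{Range}(\mathsf{M}) = \mathrm{Span}(\hat{\mathsf{U}}_n)$. This simultaneously shows that $\hat{\mathsf{M}} = \hat{\mathsf{U}}_n$ attains the maximum and that every maximizer has column span $\mathrm{Span}(\hat{\mathsf{U}}_n)$, establishing~\eqref{eqn:optimal_M_cond}. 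The main obstacle is spotting the correct change of variable $\tilde{\mathsf{M}} = \mathsf{C}^{1/2}\mathsf{M}$ that converts the rational trace expression into a form amenable to Ky Fan; once it is in place, the remainder of the argument reduces to routine matrix algebra and the invertibility of $\mathsf{C}^{1/2}$.
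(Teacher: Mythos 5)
Your proof is correct and follows essentially the same route as the paper's: both reduce the objective $\mathrm{Tr}[\mathsf{K}^\top\mathsf{\Theta}^{-1}\mathsf{K}]$ to a trace of a symmetric matrix against a rank-$n$ orthogonal projection and then maximize. The difference is mostly one of packaging. The paper works in the coordinates of the singular basis, writing the projection onto $\hat{\mathsf{\Lambda}}\hat{\mathsf{U}}^\top\mathsf{M}$ as $\mathsf{R}\mathsf{R}^\top$ with $\mathsf{R}^\top\mathsf{R}=\mathsf{I}_n$, expanding the trace as $\sum_j \hat\lambda_j^2 a_j$ with $a_j=\sum_i r_{ij}^2$, and then re-deriving the extremal principle from the constraints $0\le a_j\le 1$, $\sum_j a_j = n$; you instead make the change of variable $\tilde{\mathsf{M}}=\mathsf{C}^{1/2}\mathsf{M}$ to obtain the coordinate-free form $\mathrm{Tr}[\mathsf{P}_{\tilde{\mathsf{M}}}\mathsf{C}]$ and invoke Ky Fan's maximum principle as a named lemma. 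These are the same argument: since $\mathsf{C}^{1/2}=\hat{\mathsf{U}}\hat{\mathsf{\Lambda}}\hat{\mathsf{U}}^\top$, your $\tilde{\mathsf{M}}=\hat{\mathsf{U}}(\hat{\mathsf{\Lambda}}\hat{\mathsf{U}}^\top\mathsf{M})$ and $\mathsf{P}_{\tilde{\mathsf{M}}}=\hat{\mathsf{U}}\,\mathsf{P}_{\hat{\mathsf{\Lambda}}\hat{\mathsf{U}}^\top\mathsf{M}}\,\hat{\mathsf{U}}^\top$, so $\mathrm{Tr}[\mathsf{P}_{\tilde{\mathsf{M}}}\mathsf{C}]=\mathrm{Tr}[\hat{\mathsf{\Lambda}}\mathsf{P}_{\hat{\mathsf{\Lambda}}\hat{\mathsf{U}}^\top\mathsf{M}}\hat{\mathsf{\Lambda}}]$, which is exactly the paper's expression. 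Citing Ky Fan buys you a shorter argument and makes the equality case (hence the uniqueness statement in~\eqref{eqn:optimal_M_cond} under strict gap $\hat\lambda_n>\hat\lambda_{n+1}$) immediate, at the cost of invoking an external theorem rather than keeping the proof self-contained as the paper does. Your direct verification of the first bullet ($\mathsf{K}=\hat{\mathsf{\Lambda}}_n^2\hat{\mathsf{U}}_n^\top$, $\mathsf{\Theta}=\hat{\mathsf{\Lambda}}_n^2$, $\mathsf{W}=\hat{\mathsf{U}}_n$, $\Psi_i=\hat\lambda_i\langle\mathsf{f},\hat{\mathsf{v}}_i\rangle$) is also correct and slightly more explicit than the paper's treatment of~\eqref{eqn:opt_measure_func}.
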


\begin{proof}
Recalling the definition of $\mathsf{K}$ and $\mathsf{\Theta}$ in~\eqref{def:sfK_sfTheta}, the objective function in~\eqref{eqn:optimal_dis} can be written as
\[
\mathrm{Tr}\left[ \mathsf{K}^\top \mathsf{\Theta}^{-1} \mathsf{K} \right]
= \mathrm{Tr}\left[ \mathsf{\hat{\Lambda}}^2 \mathsf{\hat{U}}^\top \mathsf{M} (\mathsf{M}^\top \mathsf{\hat{U}} \mathsf{\hat{\Lambda}}^2 \mathsf{\hat{U}}^\top \mathsf{M})^{-1} \mathsf{M}^\top \mathsf{\hat{U}} \mathsf{\hat{\Lambda}}^2 \right]
= \mathrm{Tr} \left[ \mathsf{\hat{\Lambda}} \mathsf{P}_{\mathsf{\hat{\Lambda}}\mathsf{\hat{U}}^\top\mathsf{M}} \mathsf{\hat{\Lambda}} \right] \,.
\]
where $\mathsf{P}_{\mathsf{\hat{\Lambda}}\mathsf{\hat{U}}^\top\mathsf{M}}$ are the orthogonal projection onto $\mathsf{\hat{\Lambda}}\mathsf{\hat{U}}^\top\mathsf{M}$. Hence, the optimal test function problem~\eqref{eqn:optimal_dis} is equivalent to
\begin{equation}\label{eqn:bayes_trace}
\mathsf{\hat{M}}
= \argmax\limits_{\substack{\mathsf{M}\in\mathbb{R}^{N\times n} \\ \mathrm{rank}(\mathsf{M})=n}} \mathrm{Tr} \left[ \mathsf{\hat{\Lambda}} \mathsf{P}_{\mathsf{\hat{\Lambda}}\mathsf{\hat{U}}^\top\mathsf{M}} \mathsf{\hat{\Lambda}} \right] \,.
\end{equation}

We write the projection matrix as:
\begin{equation}
    \mathsf{P}_{\mathsf{\hat{\Lambda}}\mathsf{\hat{U}}^\top\mathsf{M}}
    = \mathsf{R} \mathsf{R}^\top \,.
\end{equation}
where $\mathsf{R} = [\mathsf{r}_1,\dots,\mathsf{r}_N] \in \mathbb{R}^{N\times n}$ is a unitary matrix satisfying $\mathsf{R}^\top \mathsf{R} = \mathsf{I}_n$. By substituting  into~\eqref{eqn:bayes_trace}, we obtain
\begin{equation}\label{eqn:bayes_trace_simp}
\mathrm{Tr} \left[ \mathsf{\hat{\Lambda}} \mathsf{P}_{\mathsf{\hat{\Lambda}}\mathsf{\hat{U}}^\top\mathsf{M}} \mathsf{\hat{\Lambda}} \right]
= \sum_{i=1}^n \mathsf{r}_i^{\top} \mathsf{\hat{\Lambda}}^2 \mathsf{r}_i
= \sum_{i=1}^n \sum_{j=1}^N \hat{\lambda}_j^2 \mathsf{r}^2_{ij}
= \sum_{j=1}^N \hat{\lambda}_j^2 a_j  \,,
\end{equation}
where $\mathsf{r}_i := [r_{i1},\dots,r_{iN}]^\top$ and we define $a_j = \sum_{i=1}^n r_{ij}^2$.
The optimal measurement is achieved if we can choose a set of orthonormal basis $\{\mathsf{r}_i\}_{i=1}^N$ such that the weighted sum of $a_j$ is maximized. Since $\mathsf{R}$ is unitary, we obtain the following restrictions on $a_j$
\[
0 \leq a_j \leq 1 \,, \text{ for } i = 1,\dots, N \quad \text{ and }
\sum_{i=1}^N a_j = n \,.
\]
Noting that $\hat{\lambda}_1>\dots>\hat{\lambda}_N>0$,~\eqref{eqn:bayes_trace_simp} is maximized if and only if the unitary matrix $\mathsf{R}$ has $a_j = 1$ for $1 \leq j \leq n$ and $a_j = 0$ for $n+1 \leq j \leq N$.
That is, $\mathsf{R}$ has the following form
\begin{equation}
    \mathsf{R} =
    \begin{bmatrix}
    \mathsf{R_0} \\
    \mathsf{0}_{(N-n)\times n}
    \end{bmatrix} \,,
\end{equation}
where $\mathsf{R_0} \in \mathbb{R}^{n\times n}$ and $\mathsf{R_0}^\top \mathsf{R_0} = \mathsf{I}_n$.
Hence, for any an optimal observation matrix $\mathsf{\hat{M}}$, the projection matrix $\mathsf{P}_{\mathsf{\hat{\Lambda}}\mathsf{\hat{U}}^\top\mathsf{\hat{M}}}$ satisfies
\[\mathsf{P}_{\mathsf{\hat{\Lambda}}\mathsf{\hat{U}}^\top\mathsf{\hat{M}}} = \mathsf{D} := \diag(\underbrace{1,\dots, 1}_{n}, \underbrace{0, \dots,0}_{N-n})\,.
\]
We thus have $\mathrm{Span}(\mathsf{\hat{\Lambda}}\mathsf{\hat{U}}^\top\mathsf{\hat{M}}) = \mathrm{Span}(\mathsf{D}_{:,1:n})$ and it follows the optimality condition~\eqref{eqn:optimal_M_cond} \[
\mathrm{Span}(\mathsf{\hat{M}}) = \mathrm{Span}(\mathsf{\hat{U}}\mathsf{\hat{\Lambda}}^{-1}\mathsf{D}_{:,1:n}) = \mathrm{Span}(\mathsf{\hat{U}}_n)\,.
\]

On the other hand, selecting $\mathsf{\hat{\Lambda}}\mathsf{\hat{U}}^\top\mathsf{\hat{M}} = \mathsf{D}_{:,1:n}$, we get $\mathsf{\hat{M}} = \mathsf{\hat{U}}_n\mathsf{\hat{\Lambda}}^{-1}_{:,1:n}$. Renormalizing $\mathsf{\hat{M}}$, we obtain $\mathsf{\hat{M}} = \mathsf{\hat{U}}_n$.
\end{proof}

\section{Kolmogorov $n$-width and the randomized SVD}\label{sec:kol_dis}

The beautiful relationship among the Kolmogorov $n$-width, the SVD of the operator $\mathcal{S}$, and the optimality condition in the Bayesian sense allows us to use the fast SVD solvers developed in recent years to find optimal basis functions. We study  a numerical setup in this subsection that  integrates the randomized SVD algorithm.

We recall that the $\mathcal{S}$ operator can be represented as the discrete Green's matrix $\mathsf{G}$, which can be regarded as an operator mapping from $(\mathbb{R}^N, \|\cdot\|_\mathsf{X})$ to $(\mathbb{R}^N, \|\cdot\|_\mathsf{Y})$. Its adjoint operator $\mathsf{G}^\ast$ maps $(\mathbb{R}^N, \|\cdot\|_\mathsf{Y})$ back to $(\mathbb{R}^N, \|\cdot\|_\mathsf{X})$. The SVD of $\mathsf{G}$ satisfies
\[
\mathsf{G} \mathsf{\hat{V}} = \mathsf{\hat{U}} \mathsf{\hat{\Lambda}} \,, \quad \mathsf{G}^\ast \mathsf{\hat{U}} = \mathsf{\hat{V}} \mathsf{\hat{\Lambda}} \,.
\]
Recalling~\eqref{eqn:sfG_expansion}, the basis $\mathsf{\hat{V}}$ can be found via standard eigendecomposition algorithms:
\begin{equation}\label{eqn:G_ast2}
 \mathsf{G}^\ast \mathsf{G}=\mathsf{\hat{V}}\Sigma\mathsf{\hat{V}}^{-1}\,,\quad\text{and}\quad
\mathsf{\Pi}_\mathsf{X}^{-1/2}\mathsf{G}^\top\mathsf{\Pi}_{\mathsf{Y}}\mathsf{G}\mathsf{\Pi}_\mathsf{X}^{-1/2}
=(\mathsf{\Pi}_\mathsf{X}^{1/2}\mathsf{\hat{V}})\Sigma(\mathsf{\Pi}_\mathsf{X}^{1/2}\mathsf{\hat{V}})^\top\,.
\end{equation}
We note that $\mathsf{\Pi}_\mathsf{X}^{1/2}\mathsf{\hat{V}}$ is orthogonal.

Naively, one can prepare $\mathsf{G}$ ahead of time, computing $\mathsf{G}^\ast \mathsf{G}$ or $\mathsf{\Pi}_\mathsf{X}^{-1/2}\mathsf{G}^\top\mathsf{\Pi}_{\mathsf{Y}}\mathsf{G}\mathsf{\Pi}_\mathsf{X}^{-1/2}$ as in~\eqref{eqn:G_ast2} before performing eigenvalue decomposition. However, $\mathsf{G}$ is typically a very large matrix, and preparing it ahead of time may require a large overhead. The procedure is roughly as follows.
\begin{itemize}
    \item Preparation of $\mathsf{G}$: This amounts to finding the LU decomposition for $\sfL$ and solving each columns of $\mathsf{G}$. The cost is $O(N^{2.3}+N^3)$ (We use Coppersmith–Winograd algorithm for measuring the complexity of LU decomposition. Faster solvers are available when $\sfL$ is sparse, but we omit further exploration of this point);
    \item Computation of $\mathsf{G}^\ast \mathsf{G}$ or $\mathsf{\Pi}_\mathsf{X}^{-1/2}\mathsf{G}^\top\mathsf{\Pi}_{\mathsf{Y}}\mathsf{G}\mathsf{\Pi}_\mathsf{X}^{-1/2}$: $O(N^3)$ by~\eqref{eqn:G_ast2} assuming the weight matrices are sparse;
    \item Eigenvalue decomposition of $\mathsf{G}^\ast \mathsf{G}$ or $\mathsf{\Pi}_\mathsf{X}^{-1/2}\mathsf{G}^\top\mathsf{\Pi}_{\mathsf{Y}}\mathsf{G}\mathsf{\Pi}_\mathsf{X}^{-1/2}$ costs $O(N^3)$.
\end{itemize}

We note that the largest overhead computational cost comes from the preparation of $\mathsf{G}$. It would be ideal if the calculation can be done on-the-fly, without the need to prepare for the entire matrix ahead of time. According to \eqref{eqn:G_ast2}, performing eigenvalue decomposition for $\mathsf{G}^\ast\mathsf{G}$ can be translated to performing SVD for $\mathsf{\Pi}_{\mathsf{Y}}^{1/2}\mathsf{G}\mathsf{\Pi}_{\mathsf{X}}^{-1/2}$. Thus, we propose to run randomized SVD for $\mathsf{\Pi}_{\mathsf{Y}}^{1/2}\mathsf{G}\mathsf{\Pi}_{\mathsf{X}}^{-1/2}$.

\subsection{Computation of optimal basis using Randomized SVD}\label{sec:rsvd}
Randomized SVD, introduced in~\cite{doi:10.1073/pnas.0709640104,WOOLFE2008335,doi:10.1137/090771806} shows that a low rank matrix $\mathsf{A}$ with rank $\sim r$, if multiplied by a set of $r+p$ i.i.d. randomly generated Gaussian vectors, with $p\sim 2$, has a high chance of recovering the singular vectors/values of $\mathsf{A}$. Algorithm~\ref{alg:rsvd} summarizes the approach.

\begin{algorithm}[h]
\caption{\textbf{Randomized SVD for $\mathsf{A}$}}\label{alg:rsvd}
\begin{algorithmic}
\State Input: Matrix $\mathsf{A}\in\Rb^{N\times N}$; estimated rank: $r$; oversampling number $p$.

\State \textbf{Randomized range finder for $\mathsf{Q}=\text{range}(\mathsf{A})$}
\State \space1. Generate random samples: Draw an $\NN\times (r+p)$ Gaussian random matrix $\mathsf{P}$. Denote $\mathsf{P}_{:,i}\in\Rb^{\MM}$ the $i$-th column of $\mathsf{P}$.
\State \space2. Compute $\mathsf{A}\mathsf{P}$.

\State \space3. Find the range: Perform QR-factorization $[\mathsf{Q},\sim]=\text{QR}(\mathsf{C})$.

\State \textbf{Compute $\mathsf{B} = \mathsf{Q}^\top \mathsf{A}$:}
\State \textbf{Perform SVD for $\mathsf{B}$}
\begin{itemize}
    \item[--] Call $\mathsf{B} = \widetilde{\mathsf{U}}{\mathsf{S}}\widetilde{\mathsf{V}}^\top = \sum_is_i\widetilde{\mathsf{u}}_i\widetilde{\mathsf{v}}^\top_i$
\end{itemize}

\State \textbf{Output:}$\{\widetilde{\mathsf{v}}_{i}\}^r_{i=1}$.
\end{algorithmic}
\end{algorithm}

Noting that the Algorithm~\ref{alg:rsvd} does not require  full access to the matrix $\mathsf{A}$, but rather its left matrix-vector product with $\mathsf{p}$, a random Gaussian i.i.d. vector, and its right product with the matrix  $\mathsf{q}$. In our context, we thus avoid the cost of preparing $\mathsf{G}$. To utilize this algorithm, we note that in our situation,
\[
\mathsf{A}=\mathsf{\Pi}_{\mathsf{Y}}^{1/2}\mathsf{G}\mathsf{\Pi}_{\mathsf{X}}^{-1/2}=\mathsf{\Pi}_{\mathsf{Y}}^{1/2}\mathsf{L}^{-1}\mathsf{\Pi}_{\mathsf{X}}^{-1/2}\,,
\]
so the main tasks are to understand how to compute $\mathsf{A}\mathsf{p}=\mathsf{\Pi}_{\mathsf{Y}}^{1/2}\mathsf{L}^{-1}\mathsf{\Pi}_{\mathsf{X}}^{-1/2}\mathsf{p}$ and $\mathsf{q}^\top\mathsf{A}=\mathsf{q}^\top\mathsf{\Pi}_{\mathsf{Y}}^{1/2}\mathsf{L}^{-1}\mathsf{\Pi}_\mathsf{X}^{-1/2}$. Details are as follows.
\begin{itemize}
    \item Defining $\mathsf{c} = \mathsf{L}^{-1}\mathsf{\Pi}_{\mathsf{X}}^{-1/2}\mathsf{p}$, we have
    \[
    \mathsf{L}\mathsf{c} = \mathsf{\Pi}_{\mathsf{X}}^{-1/2}\mathsf{p}\,.
    \]
    This amounts to solving the equation on the discrete setting using the source term  $\mathsf{\Pi}_{\mathsf{X}}^{-1/2}\mathsf{p}$. Then $\mathsf{A}\mathsf{p}=\mathsf{\Pi}_{\mathsf{Y}}^{1/2}\mathsf{c}$.
    \item Let $\mathsf{d} = \mathsf{q}^\top\mathsf{\Pi}_{\mathsf{Y}}^{1/2}\mathsf{L}^{-1}\mathsf{\Pi}_{\mathsf{X}}^{-1/2}$, we have:
    \[
    \mathsf{L}^{-\top}\mathsf{\Pi}_{\mathsf{Y}}^{1/2}\mathsf{q}=\left(\mathsf{d}\mathsf{\Pi}_{\mathsf{X}}^{1/2}\right)^\top\,.
    \]
    By solving $\mathsf{L}^\top\tilde{\mathsf{d}}^\top = \mathsf{\Pi}_{\mathsf{Y}}^{1/2}\mathsf{q}$, we obtain $\mathsf{d}$ by setting $\mathsf{d} = \tilde{\mathsf{d}}\mathsf{\Pi}_{\mathsf{X}}^{-1/2}$. This process amounts to inverting the transpose of the discrete system on a weighted source.
\end{itemize}

With these calculations incorporated to Algorithm~\ref{alg:rsvd}, we have an algorithm for finding the optimal PDE basis, which we summarize in Algorithm~\ref{ALG1}. The computational costs are as follows.
\begin{itemize}
    \item Computation of $\mathsf{\Pi}^{1/2}_{\mathsf{Y}}\mathsf{G}\mathsf{\Pi}_{\mathsf{X}}^{-1/2}\mathsf{P}$:
    \begin{enumerate}[wide, labelwidth=!, labelindent=0pt]
        \item Computation of $\mathsf{\Pi}_{\mathsf{X}}^{-1/2}\mathsf{P}$ costs $O((r+p)N^2)$;
        \item Solve for $\widetilde{\mathsf{C}}\in\mathbb{R}^{N\times (r+p)}$ according to \eqref{eqn:Csolver}: There are $r+p$ PDE solves, with each of cost $O(N^3)$. The total cost is $O(N^{2.3}+(r+p)N^2)$;
    \end{enumerate}
    \item QR-factorization of $\mathsf{C}$ costs $O((r+p)^2N)$.
    \item Computation of $\mathsf{Q}^\top \mathsf{\Pi}^{1/2}_{\mathsf{Y}}\mathsf{G}\mathsf{\Pi}_{\mathsf{X}}^{-1/2}$:
    \begin{enumerate}[wide, labelwidth=!, labelindent=0pt]
        \item Solve for $\mathsf{E}\in\mathbb{R}^{N\times (r+p)}$ according to \eqref{eqn:Dsolver}: There are $r+p$ PDE solves, costing $O(N^{2.3}+(r+p)N^2)$;
        \item Computation of $\mathsf{E}^\top \mathsf{\Pi}_{\mathsf{X}}^{-1/2}$ costs $O((r+p)N^2)$;
    \end{enumerate}
    \item Perform SVD for $\mathsf{B}$ costs $O((r+p)N^2)$.
\end{itemize}
The total cost is  $O(N^{2.3}+(r+p)N^2)$. In comparison with looking directly for an optimal basis by preparing  the whole Green's matrix, this solver significantly reduce the numerical cost. We should also note that the solver is \emph{online} in the sense that we can gradually increase $r$. Suppose by increase $r$ to $r+1$, the newly found $\mathsf{q}$ lies within $\epsilon$ accuracy of the space spanned by the old $\mathsf{Q}$, the singular value decomposition saturates with an accuracy of $\epsilon$.

\begin{algorithm}[h]
\caption{\textbf{Optimal basis finder}}\label{ALG1}
\begin{algorithmic}
\State Input: The discrete linear operators $\mathsf{L}$; $\mathsf{\Pi}_\mathsf{X};\mathsf{\Pi}_\mathsf{Y}$; estimated rank $r$; oversampling $p$;

\State \textbf{Randomized range finder for $\mathsf{Q}=\text{range}\left(\mathsf{\Pi}^{1/2}_{\mathsf{Y}}\mathsf{G}\mathsf{\Pi}_{\mathsf{X}}^{-1/2}\right)$}
\State \space1. Generate random samples: Draw an $N\times (r+p)$ Gaussian random matrix $\mathsf{P}$. Denote $\mathsf{P}_{:,i}\in\Rb^N$ the $i$-th column of $\mathsf{P}$.
\State \space2. Compute $\mathsf{\Pi}^{1/2}_{\mathsf{Y}}\mathsf{G}\mathsf{\Pi}_{\mathsf{X}}^{-1/2}\mathsf{P}$:
\begin{itemize}
\item[--] Solve for $\widetilde{\mathsf{C}}\in\mathbb{R}^{N\times (r+p)}$ so that
\begin{equation}\label{eqn:Csolver}
\mathsf{L}
\widetilde{\mathsf{C}} = \mathsf{\Pi}_{\mathsf{X}}^{-1/2}\mathsf{P}\,;
\end{equation}
\item[--] Set $\mathsf{C}=\mathsf{\Pi}_{\mathsf{Y}}^{1/2}\widetilde{\mathsf{C}}$.
\end{itemize}
\State \space3. Find the range: Perform QR-factorization $[\mathsf{Q},\sim]=\text{QR}(\mathsf{C})$.

\State \textbf{Compute $\mathsf{B} = \mathsf{Q}^\top \Pi^{1/2}_{\mathsf{Y}}\mathsf{G}\mathsf{\Pi}_{\mathsf{X}}^{-1/2}$:}
\begin{itemize}
\item[--] Define $\widetilde{\mathsf{Q}} = \mathsf{\Pi}_{\mathsf{Y}}^{1/2}\mathsf{Q}$;
    \item[--] Solve
    \begin{equation}\label{eqn:Dsolver}
    \mathsf{L}^\top\mathsf{E}=\widetilde{\mathsf{Q}}\,,
    \end{equation}
    \item[--] Define $\mathsf{B} =\mathsf{E}^\top \mathsf{\Pi}_{\mathsf{X}}^{-1/2}$.
\end{itemize}

\State \textbf{Perform SVD for $\mathsf{B}$}
\begin{itemize}
    \item[--] Call $\mathsf{B} = \widetilde{\mathsf{U}}{\mathsf{\Lambda}}\widetilde{\mathsf{V}}^\top= \sum_i\hat{\lambda}_i\widetilde{\mathsf{u}}_i\widetilde{\mathsf{v}}^\top_i$.
\end{itemize}
\State \textbf{Calculate optimal basis}
\[
\mathsf{\hat{v}}_{i}=\mathsf{\Pi}_\mathsf{X}^{-1/2}\widetilde{\mathsf{v}}_i,\quad \mathsf{\hat{u}}_{i}=\frac{1}{\hat{\lambda}_i}\mathsf{G}\mathsf{\hat{v}}_{i}\,,\quad\text{for}\quad i=1\,,\dots, r\,.
\]

\State \textbf{Output:} $\{\hat{\lambda}_i,\mathsf{\hat{v}}_{i},\mathsf{\hat{u}}_i\}^r_{i=1}$.
\end{algorithmic}
\end{algorithm}

\section{Optimal basis for nonlinear PDEs}\label{sec:nonlinear}
One critique of Kolmogorov $n$-width or the concept of Bayesian PDE is that the concepts are valid only for linear equations. Indeed only in the linear case can one argue about the solution space, and only in the linear case can the Gaussian field of the source translates to a Gaussian field of the solution.

What would happen for nonlinear PDEs? We argue in this section that if a nonlinear PDE has a substantial linear component, the optimal basis functions still form a good venue to conduct iterations. To start, we write out equation as
\begin{equation}\label{eqn:MP1r}
\mathcal{L} (u(x))+\mathcal{N}(u(x))=f(x),\quad x\in\Omega
\end{equation}
where $\mathcal{L}:\mathcal{Y}\to\mathcal{X}$ and $\mathcal{N}:\mathcal{Y}\to\mathcal{X}$ represent the linear and nonlinear components respectively.

\begin{remark}
Many PDE examples can be written in the form of~\eqref{eqn:MP1r}. This includes:
\begin{itemize}
\setlength{\itemindent}{2em}
    \item[]{$\bullet$} The semilinear elliptic equation:
\[
-\nabla \cdot (\kappa(x) \nabla u(x)) + u^3(x)= f(x),\quad x\in\Omega\,.
\]
The equation describes nonlinear diffusion generated by nonlinear sources~\cite{joseph1973quasilinear}, and the gravitational equilibrium of stars~\cite{chandrasekhar1957introduction,lions1982existence}. In this case the linear and nonlinear term are respectively
\begin{equation}
\mathcal{L} u = -\nabla \cdot (\kappa(x) \nabla u)\,, \quad \mathcal{N}(u) = u^3 \,.
\end{equation}
    \item[]{$\bullet$} The semilinear radiative transport equation that governs the dynamics of the distribution of photons $u(x,v)$ on the phase space:
\[
v\cdot \nabla_x u + (\sigma_\mathrm{t}(x) + \sigma_\mathrm{b}(x) \langle u \rangle (x)) u =  \sigma_\mathrm{s}(x) \int_{\mathbb{V}} u(x,v') \rmd \mu(v') + f(x,v)\,, \quad (x,v)\in\Omega\times\mathbb{V},
\]
where $x\in\Omega$ is the physical domain variable and $v\in\mathbb{V}$ is the velocity domain variable. The equation characterizes the distribution of photon particles upon scattering with the underlying media through $\sigma_{\mathrm{s}}$ and absorbed through $\sigma_{\mathrm{t}}$. In this case, the linear and the nonlinear components are
\begin{equation}
    \begin{aligned}
    &\mathcal{L} u = v\cdot \nabla_x u + \sigma_\mathrm{t}(x) u
-  \sigma_\mathrm{s}(x) \int_{\mathbb{V}} u(x,v') \rmd \mu(v')\,, \\
    &\mathcal{N}(u) = \sigma_\mathrm{b}(x) \langle u \rangle (x) u \,,
    \end{aligned}
\end{equation}
where the nonlinear component represents the two-photon scattering. See~\cite{BaDeEn2009:handbook,ReZh2021:unique,LaReZh2022:inverse}.
\end{itemize}
We study both examples numerically  in~\Cref{sec:numerical}.
\end{remark}

One way to view this nonlinear equation is to set $f(x)-\mathcal{N}(u(x))$ as the source term, and find the corresponding solution by inverting the linear operator $\mathcal{L}$. In doing so, we can still project the solutions in the space spanned by the leading spectrum components. Recall the definition \eqref{eqn:basisw} of $\mathcal{S}$, the solution operator associated with $\mathcal{L}$, and denote $\{\lambda_i,\hat{u}_i,\hat{v}_i\}$ as the singular value-function triplets.
Then, if the solution is projected on this space, we have the following a-priori error estimate:
\begin{theorem}\label{thm:nonlinearcase} Under Assumption \ref{assum:SR}, for any $f\in\mathcal{X}$, suppose that the solution $u$ to \eqref{eqn:MP1r} satisfies $u\in\mathcal{Y}$ and $\mathcal{N}(u(x))\in\mathcal{X}$, then for any $n>0$, we have
\begin{equation}\label{eqn:basis_nonlin}
\left\|u-\sum^n_{i=1}\lambda_i\hat{c}_i \hat{u}_{i}\right\|_\mathcal{Y}\leq \lambda_{n+1}\left(\|f\|_{\mathcal{X}}+\|\mathcal{N}(u)\|_{\mathcal{X}}\right)\,,
\end{equation}
where $\hat{c}_i$ is the projection
\begin{equation}\label{eqn:choiceofxnonlinear}
\hat{c}_i = \left\langle f-\mathcal{N}(u), \hat{v}_{i} \right\rangle  \quad \text{for} \quad i=1\,,\dots\,,n\,.
\end{equation}
\end{theorem}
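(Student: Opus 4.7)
The plan is to treat the nonlinear term as a modification of the source and then invoke the linear result already proved in \Cref{thm:basis}. Concretely, I would rewrite \eqref{eqn:MP1r} as
\[
\mathcal{L}(u) = f - \mathcal{N}(u),
\]
and note that by the hypothesis $\mathcal{N}(u)\in\mathcal{X}$, so the effective source $\tilde{f} := f-\mathcal{N}(u)$ lies in $\mathcal{X}$. Under Assumption~\ref{assum:SR} the solution operator $\mathcal{S}$ maps $\mathcal{X}$ to $\mathcal{Y}$, so $u=\mathcal{S}(\tilde{f})\in\mathcal{U}$ — that is, $u$ lies in the same space of admissible solutions that Theorem~\ref{thm:basis} was designed for.

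Next I would apply the bound \eqref{eqn:approximationerrorbound} directly to this $u$. Since Theorem~\ref{thm:basis} guarantees that for any $u\in\mathcal{U}$,
\[
\left\|u-\sum_{i=1}^{n}\lambda_i \hat{c}_i \hat{u}_i\right\|_{\mathcal{Y}} \;\leq\; \lambda_{n+1}\,\|\mathcal{L}(u)\|_{\mathcal{X}},
\qquad \hat{c}_i=\langle\mathcal{L}(u),\hat{v}_i\rangle_{\mathcal{X}},
\]
and here $\mathcal{L}(u)=f-\mathcal{N}(u)$, the projection coefficients become exactly $\hat{c}_i=\langle f-\mathcal{N}(u),\hat{v}_i\rangle_{\mathcal{X}}$, matching \eqref{eqn:choiceofxnonlinear}. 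The final step is a triangle inequality on the right-hand side,
\[
\|\mathcal{L}(u)\|_{\mathcal{X}} = \|f-\mathcal{N}(u)\|_{\mathcal{X}} \leq \|f\|_{\mathcal{X}}+\|\mathcal{N}(u)\|_{\mathcal{X}},
\]
which produces \eqref{eqn:basis_nonlin}.

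There is really no analytical obstacle here: the nonlinearity does not interact with the SVD structure because we treat $u$ \emph{a posteriori} as the solution of a linear equation whose source happens to depend on $u$. The only subtlety worth flagging explicitly in the write-up is that \eqref{eqn:basis_nonlin} is an \textbf{a priori} estimate in the sense that the right-hand side involves $\|\mathcal{N}(u)\|_{\mathcal{X}}$ evaluated at the unknown $u$, and similarly the optimal coefficients $\hat{c}_i$ are defined using $u$. Consequently the bound does not itself give a computable recipe for constructing the projection — it only certifies that \emph{if} one had access to $u$, projecting onto the leading $n$ left singular functions of $\mathcal{L}$ would yield the stated error. This is precisely what motivates the iterative scheme discussed in the remainder of \Cref{sec:nonlinear}, where $\mathcal{N}(u)$ is approximated by $\mathcal{N}$ applied to the current iterate.
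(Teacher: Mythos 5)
Your proposal is correct and is precisely the argument the paper uses: view $f-\mathcal{N}(u)$ as the effective source so that Theorem~\ref{thm:basis} applies directly, then split $\|\mathcal{L}(u)\|_{\mathcal{X}}=\|f-\mathcal{N}(u)\|_{\mathcal{X}}$ by the triangle inequality. Your closing remark that the bound is a priori (involving the unknown $u$ on the right) is a useful clarification consistent with how the paper frames the result.
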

The proof is straightforward: Recall Theorem~\ref{thm:basis} and apply the triangle inequality to split the bound for $f$ and $\mathcal{N}(u)$.

We note that $\mathcal{N}(u)$ on the right hand side of~\eqref{eqn:basis_nonlin} is the application of $\mathcal{N}$ on the true solution $u$ and thus $\left(\|f\|_{\mathcal{X}}+\|\mathcal{N}(u)\|_{\mathcal{X}}\right)$ is a bounded quantity. This indicates that if the sequence of $\{\lambda_n\}$ is vanishing, the true solution $u$ could nevertheless be represented well by $\{\hat{u}_i\}$, the basis of the linear component $\mathcal{L}$. These estimates let us to approximate the solution as:
\begin{equation}\label{eqn:approx_nonlinear_u}
u(x)\approx u_n = \sum^{n}_{i=1}\lambda_ic_{i} \hat{u}_{i}
\,,\quad
\mathcal{L}u(x)\approx \mathcal{L}u_n = \sum^{n}_{i=1}c_i \hat{v}_{i}
\end{equation}
where the coefficients are
\begin{equation}\label{eqn:firstchoiceofci}
\{\hat{c}_{i}\}^{n}_{i=1}=\argmin\limits_{\left\{c_{i}\right\}^n_{i=1}\subset\mathbb{R}} \left\|\sum^{n}_{i=1}{c_{i}}\hat{v}_{i}-\left(f-\mathcal{N}\left(\sum^{n}_{i=1}\lambda_ic_{i} \hat{u}_{i}\right)\right)\right\|_{\mathcal{X}}\,,
\end{equation}
or
\begin{equation}\label{eqn:secondchoiceofci}
\{\hat{c}_{i}\}^{n}_{i=1}=\argmin\limits_{\left\{c_{i}\right\}^n_{i=1}\subset\mathbb{R}} \left\|\sum^{n}_{i=1}{c_{i}}\hat{v}_{i}-P_n\left(f-\mathcal{N}\left(\sum^{n}_{i=1}\lambda_ic_{i} \hat{u}_{i}\right)\right)\right\|_{\mathcal{X}}\,.
\end{equation}
In the formula, $P_n$ is the projection map onto $\text{Span}\left\{\hat{v}_{i}\right\}^n_{i=1}$, namely:
\begin{equation}\label{eqn:proj_P}
P_n(g) = \sum_{i=1}^n\langle g,\hat{v}_i\rangle_\mathcal{X} \hat{v}_i\,,\quad P_n^\perp(g) = \sum_{i=n+1}^\infty\langle g,\hat{v}_i\rangle_\mathcal{X} \hat{v}_i\,.
\end{equation}

We will show that under mild assumptions, the form of~\eqref{eqn:approx_nonlinear_u} with the coefficients $\{\hat{c}_i\}$ computed from~\eqref{eqn:firstchoiceofci} or \eqref{eqn:secondchoiceofci} provide good approximation. The theorem is built upon the following stability assumption.
\begin{assumption}[Stability and locally Lipschitz property]\label{assum:stability} The equation \eqref{eqn:MP1r} and the nonlinear operator $\mathcal{N}$ satisfy the following properties:
\begin{enumerate}[label=$\bullet$,wide, labelwidth=!, labelindent=10pt]
    \item There exists a non-decreasing function $E:\mathbb{R}^+\rightarrow\mathbb{R}^+$ such that $\lim_{x\rightarrow0}E(x)=0$ and for any $f,g\in\mathcal{X}$, if $u_f,u_g\in\mathcal{Y}$ are solutions to \eqref{eqn:MP1r} with source term $f$ and $g$, respectively, then
    \begin{equation}\label{eqn:stability}
    \left\|u_f-u_g\right\|_\mathcal{Y}\leq E\left(\|f-g\|_\mathcal{X}\right).
    \end{equation}
    \item $\mathcal{N}:\mathcal{Y}\rightarrow\mathcal{X}$ is a locally Lipschitz operator.
\end{enumerate}
\end{assumption}
Under this assumption, we have the following result.
\begin{theorem}\label{thm:nonlinearcase2}
  Suppose that Assumptions \ref{assum:SR} and~\ref{assum:stability} hold and that $n>0$ is sufficiently large. For any $f\in\mathcal{X}$, suppose the solution $u$ to \eqref{eqn:MP1r} satisfies $u\in\mathcal{Y}$ and that $\mathcal{N}(u(x))\in\mathcal{X}$.
 Then  we have
\begin{itemize}
\setlength{\itemindent}{2em}
    \item[]{--} For $u\approx u_n=\sum^{n}_{i=1}\lambda_i\hat{c}_{i} \hat{u}_{i}$ where $\hat{c}_{i}$ is defined in \eqref{eqn:firstchoiceofci}, we have
\begin{equation}\label{eqn:firsterrorbound}
\left\|u-u_n\right\|_\mathcal{Y}\leq E\left(\left(1+C\lambda_{n+1}\right)
\mathcal{E}_{n}^{(1)}
\right)\,,
\end{equation}
where $\mathcal{E}_{n}^{(1)}=\left\|P^{\perp}_n\left(f-\mathcal{N}\left( u \right)\right)\right\|_{\mathcal{X}}$, and
\item[]{--} For $u\approx u_n=\sum^{n}_{i=1}\lambda_i\hat{c}_{i} \hat{u}_{i}$ where $\hat{c}_{i}$ is defined in~\eqref{eqn:secondchoiceofci}, we have
\begin{equation}\label{eqn:seconderrorbound}
\begin{aligned}
\left\|u-u_n\right\|_\mathcal{Y}\leq
\lambda_{n+1}\mathcal{E}_{n}^{(2)} + E\left(C\lambda_{n+1}\left(\mathcal{E}_{n}^{(1)}+\mathcal{E}_{n}^{(2)}\right)\right),
\end{aligned}
\end{equation}
where $\mathcal{E}_{n}^{(2)}=\left\|P^{\perp}_n\left(f-\mathcal{N}\left(\sum^n_{i=1}\hat{c}_{i}\hat{u}_{i}\right)\right)\right\|_{\mathcal{X}}$ with $\{\hat{c}_i\}$ solving~\eqref{eqn:secondchoiceofci}, and $P^\perp_n$ is defined in~\eqref{eqn:proj_P}.
\end{itemize}
In these estimates, $C$ is a constant depending on $\mathcal{N},\|u\|_\mathcal{Y}$ and $E$ is the function defined in~\eqref{eqn:stability}.
\end{theorem}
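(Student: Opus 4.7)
Both bounds hinge on interpreting $u_n$ as the exact solution of a perturbed nonlinear PDE and then applying the stability estimate~\eqref{eqn:stability}. Since $\mathcal{L}(\lambda_i\hat{u}_i)=\hat{v}_i$ by the SVD relations~\eqref{eqn:basisw}, setting $u_n=\sum_{i=1}^n\lambda_i\hat{c}_i\hat{u}_i$ gives $\mathcal{L}(u_n)=\sum_i\hat{c}_i\hat{v}_i$, so $u_n$ solves exactly $\mathcal{L}v+\mathcal{N}v=g_n$ with $g_n:=\sum_i\hat{c}_i\hat{v}_i+\mathcal{N}(u_n)$. Consequently $\|u-u_n\|_{\mathcal{Y}}\le E(\|f-g_n\|_{\mathcal{X}})$, reducing each claim to an estimate of the residual $\|f-g_n\|_{\mathcal{X}}$.

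For~\eqref{eqn:firsterrorbound}, the optimality of $\{\hat{c}_i\}$ in~\eqref{eqn:firstchoiceofci} lets us upper-bound $\|f-g_n\|_{\mathcal{X}}$ by testing against any competitor. A natural choice is $c_i^\star:=\langle f-\mathcal{N}(u),\hat{v}_i\rangle_{\mathcal{X}}$, for which $\sum_ic_i^\star\hat{v}_i=P_n(f-\mathcal{N}(u))$; the induced approximation $u_n^\star=\sum_i\lambda_ic_i^\star\hat{u}_i$ is precisely the Kolmogorov approximation of $u$ studied in Theorem~\ref{thm:nonlinearcase}, so $\|u-u_n^\star\|_{\mathcal{Y}}\le\lambda_{n+1}\mathcal{E}_n^{(1)}$. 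A direct rearrangement gives
\[
\textstyle\sum_ic_i^\star\hat{v}_i-(f-\mathcal{N}(u_n^\star))=\mathcal{N}(u_n^\star)-\mathcal{N}(u)-P_n^\perp(f-\mathcal{N}(u)),
\]
and local Lipschitz continuity of $\mathcal{N}$ near $u$ bounds the first term by $C\lambda_{n+1}\mathcal{E}_n^{(1)}$ while the second is $\mathcal{E}_n^{(1)}$; together $\|f-g_n\|_{\mathcal{X}}\le(1+C\lambda_{n+1})\mathcal{E}_n^{(1)}$, producing~\eqref{eqn:firsterrorbound} after applying~\eqref{eqn:stability}.

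For~\eqref{eqn:seconderrorbound}, the minimizer~\eqref{eqn:secondchoiceofci} admits the explicit form $\hat{c}_i=\langle f-\mathcal{N}(u_n),\hat{v}_i\rangle_{\mathcal{X}}$, so that $\mathcal{L}(u_n)=P_n(f-\mathcal{N}(u_n))$. I would split the error by introducing the auxiliary \emph{linear} solution $\tilde u:=\mathcal{S}(f-\mathcal{N}(u_n))$, which differs from $u_n$ only in the tail modes: $\tilde u-u_n=\mathcal{S}(P_n^\perp(f-\mathcal{N}(u_n)))$. Because $\mathcal{S}$ has operator norm $\lambda_{n+1}$ on $\mathrm{span}\{\hat{v}_i\}_{i>n}$, this contributes the leading term $\|\tilde u-u_n\|_{\mathcal{Y}}\le\lambda_{n+1}\mathcal{E}_n^{(2)}$. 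Meanwhile $\tilde u$ itself solves the full nonlinear PDE with source $f+\mathcal{N}(\tilde u)-\mathcal{N}(u_n)$, so~\eqref{eqn:stability} combined with local Lipschitz yields $\|u-\tilde u\|_{\mathcal{Y}}\le E(C\|\tilde u-u_n\|_{\mathcal{Y}})$. The triangle inequality then produces~\eqref{eqn:seconderrorbound}; the appearance of $\mathcal{E}_n^{(1)}$ inside $E$ reflects the fact that the Lipschitz radius around $u$ must itself be controlled via the first-statement bound to certify that $u_n$ (and hence $\tilde u$) remain in the ball where $C$ applies.

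The principal obstacle is precisely the \emph{local} nature of Lipschitz continuity: its constant depends on a ball about $u$ that must contain $u_n$, $u_n^\star$, and $\tilde u$ simultaneously. The hypothesis ``$n$ sufficiently large'' is what forces $\mathcal{E}_n^{(1)}$ and $\mathcal{E}_n^{(2)}$ to be small enough that all three iterates remain in a common neighborhood on which a uniform constant $C$ is available. Making this bootstrap rigorous --- verifying that the a posteriori error bounds do not exceed the a priori radius one has assumed in order to invoke the Lipschitz estimate --- is where the bulk of the technical care resides.
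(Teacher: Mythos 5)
Your proof of~\eqref{eqn:firsterrorbound} matches the paper's argument in substance: you cast $u_n$ as the exact solution of a perturbed PDE, invoke the stability bound, and then use the optimality of $\{\hat c_i\}$ in~\eqref{eqn:firstchoiceofci} against the competitor $c_i^\star = \langle f-\mathcal{N}(u),\hat v_i\rangle_{\mathcal{X}}$ to bound the residual by $(1+C\lambda_{n+1})\mathcal{E}_n^{(1)}$. This is the same decomposition, with $c_i^\star$ playing the role of the paper's $c_i$ and your $u_n^\star$ playing the role of the paper's truncated series; only the labelling differs.

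For~\eqref{eqn:seconderrorbound}, however, there is a genuine gap. You assert that the minimizer of~\eqref{eqn:secondchoiceofci} ``admits the explicit form $\hat c_i = \langle f-\mathcal{N}(u_n),\hat v_i\rangle_{\mathcal{X}}$,'' so that $\mathcal{L}(u_n) = P_n(f-\mathcal{N}(u_n))$. This is not an explicit form of the minimizer; it is the statement that the minimum value in~\eqref{eqn:secondchoiceofci} is exactly zero, i.e.\ that the nonlinear fixed-point system $c_i = \langle f-\mathcal{N}(\sum_j\lambda_j c_j\hat u_j),\hat v_i\rangle_{\mathcal{X}}$, $i=1,\dots,n$, has a solution. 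Nothing in the hypotheses guarantees this, and the theorem is stated for the minimizer, not for a fixed point. Your subsequent identity $\tilde u - u_n = \mathcal{S}\bigl(P_n^\perp(f-\mathcal{N}(u_n))\bigr)$ relies on this vanishing; without it, $\tilde u - u_n$ also carries the term $\mathcal{S}\bigl(P_n(f-\mathcal{N}(u_n)) - \sum_i\hat c_i\hat v_i\bigr)$, which you have not bounded. The paper handles exactly this residual by comparing the minimizer against the competitor $c_i^\star$ in the optimality inequality, and that is where the $\mathcal{E}_n^{(1)}$ term comes from --- it is not, as you suggest, a ``Lipschitz-radius'' safety margin, but a concrete estimate of $\bigl\|\sum_i\hat c_i\hat v_i - P_n(f-\mathcal{N}(u_n))\bigr\|_{\mathcal{X}}$ via optimality followed by the local Lipschitz bound on $\mathcal{N}$. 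To repair your argument, you should drop the fixed-point assumption, write $u^{(3)} = u_n + u^{(2)}$ with $\mathcal{L}(u^{(2)}) = P_n^\perp(f-\mathcal{N}(u_n))$, and then estimate the residual of $u^{(3)}$ in the nonlinear equation; the extra piece $\bigl\|\sum_i\hat c_i\hat v_i - P_n(f-\mathcal{N}(u_n))\bigr\|_{\mathcal{X}}$ is then controlled by testing the minimization against $c_i^\star$, which ultimately yields $C\lambda_{n+1}\mathcal{E}_n^{(1)}$.
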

The proof of this result is deferred to~\Cref{app:thm_nonlinearcase2}.

The theorem essentially gives a justification for using the basis of $\{\hat{u}_i\}$ to represent the solution, as long as the coefficients are found properly. We should note that though the two formulations~\eqref{eqn:firstchoiceofci} and~\eqref{eqn:secondchoiceofci} look rather similar, the error estimates present very different behavior. By comparing~\eqref{eqn:firsterrorbound} and~\eqref{eqn:seconderrorbound}, we see that the first choice for finding $\{\hat{c}_i\}$ has one level of smallness while the second choice is second order. To be more specific: Supposing that $E$ is Lipschitz, the smallness of the error in~\eqref{eqn:firsterrorbound} only comes from the smallness of $\mathcal{E}_n^{(1)}$, the projection error. The decaying $\lambda_{n+1}$ does not introduce any further control of the error. On the other hand, there are two terms in~\eqref{eqn:seconderrorbound}, both of which have the quadratic form. For example, the first term $\lambda_{n+1}\mathcal{E}_n^{(2)}$ is small both because of the decaying $\lambda_{n+1}$ and because of the small projection error. The second term in~\eqref{eqn:seconderrorbound} is also quadratically small. In some sense, the second formulation~\eqref{eqn:secondchoiceofci} benefits from both the decaying $\lambda_{n+1}$ and the decaying truncation error, while the first formulation~\eqref{eqn:firsterrorbound} tapers off with the control reflecting only the truncation error.

Nevertheless, both estimates suggest that the leading modes for the linear part of the operator $\{\hat{u}_i\}$ form a good representation basis for presenting the solution to the nonlinear equation.
We choose to execute the minimization~\eqref{eqn:secondchoiceofci}
through fixed-point iteration, as described
in~\Cref{ALG2}. In the specification of this algorithm, we denote by $\{\mathsf{\hat{v}}_{i}\}^{n}_{i=1}$ and $\left\{\mathsf{\hat{u}}_{i}\right\}^{n}_{i=1}$  the numerical basis functions computed in~\Cref{ALG1} in the offline preparation stage. We note that alternatives to the fixed-point iteration could  be employed, but this is beyond the scope of the current paper.

\begin{algorithm}[h]
\caption{\textbf{Nonlinear multiscale PDE solver in $\Omega$}}\label{ALG2}
\begin{algorithmic}
\State Input: the discrete linear operators $\mathsf{L}$; the discrete source $\mathsf{f}$; nonlinear operator $\mathcal{N}$.

\State \textbf{Offline:} Prepare $\{\hat{\lambda}_i\}_{i=1}^n$; $\left\{\mathsf{\hat{v}}_{i}\right\}^{n}_{i=1}$; $\left\{\mathsf{\hat{u}}_{i}\right\}^{n}_{i=1}$ using~\Cref{ALG1}.
\State \textbf{Online:}
Choose error tolerance $\epsilon>0$, and update $\left\{c^k_{i}\right\}^{n}_{i=1}$ using:
\begin{equation}\label{eqn:updateci}
c^{k+1}_i=\left\langle \mathsf{f} - \mathcal{N}\left(\sum^{n}_{i=1} \hat{\lambda}_i c^k_{i} \mathsf{\hat{u}}_{i} \right),  \mathsf{\hat{v}}_{i} \right\rangle_\mathsf{X}
\end{equation}
until $\sum^n_{i=1}\hat{\lambda}_i^2|c^{k+1}_i-c^{k}_i|^2 < \epsilon$. Let $\hat{c}_i = c^{k+1}_i$ for $i=1\,,\dots, n$.

\State \textbf{Output:}
\[
\mathsf{u}=\sum^{n}_{i=1} \hat{\lambda}_i \hat{c}_i \mathsf{\hat{u}}_{i}\,.
\]
\end{algorithmic}
\end{algorithm}
\begin{remark}
The intuition of the iteration formula \eqref{eqn:updateci} in~\Cref{ALG2} comes from the proof of the wellposedness of~\eqref{eqn:MP1r}. In~\cite{chipot2009elliptic}, one way to show the existence of solution is to prove that $u^n$ from the following updating formula will converge when $n\rightarrow\infty$:
\[
\mathcal{L} (u^{n+1}(x)) =f(x)-\mathcal{N}(u^{n}(x)) \,.
\]
Numerically, projecting both sides of this updating formula in $\text{Span}\left\{\mathsf{\hat{v}}_{i}\right\}^n_{i=1}$ gives the iteration formula~\eqref{eqn:updateci}.
\end{remark}

\section{Numerical results}\label{sec:numerical}

We present numerical evidence to showcase the discussions of the previous sections. Two subsections are dedicated to linear and nonlinear problems, respectively. In all examples, we use spatial domain $\Omega = [0,.5]^2$.

\subsection{Linear systems}\label{sec:linear_num}
This subsection studies~\eqref{eqn:MPc} numerically, inverting a linear operator $\mathcal{L}$ for any given source term $f(x)$. We present two examples, an elliptic equation and the radiative transport equation, both having strong inhomogeneities in the media and  multiscale behavior.

\subsubsection{Elliptic equation}\label{sec:linear_elliptic}
We consider the linear elliptic equation with highly oscillatory medium:
\begin{equation}\label{eqn:linear_elliptic}
\begin{cases}
-\nabla \cdot (\kappa(x_1,x_2) \nabla u(x_1,x_2)) = f(x_1,x_2),\quad& (x_1,x_2)\in\Omega\,, \\
u(x_1,x_2) = 0,\quad& (x_1,x_2)\in\partial\Omega\,,
\end{cases}
\end{equation}
where the medium $\kappa(x_1,x_2)$ has the form of
\begin{equation} \label{eqn:medium}
\kappa(x_1,x_2) = 2+\sin(2\pi x_1)\cos(2\pi x_2)+\frac{2+1.8\sin(2\pi
  x_1/\eps)}{2+1.8\cos(2\pi x_2 / \eps)} + \frac{2+\sin(2\pi x_2
  /\eps)}{2+1.8\cos(2\pi x_1/\eps)}\,.
\end{equation}
We plot the media in \cref{fig:medium} for different values of $\eps$.
\begin{figure}[htbp]
  \centering
  \includegraphics[width=0.3\textwidth]{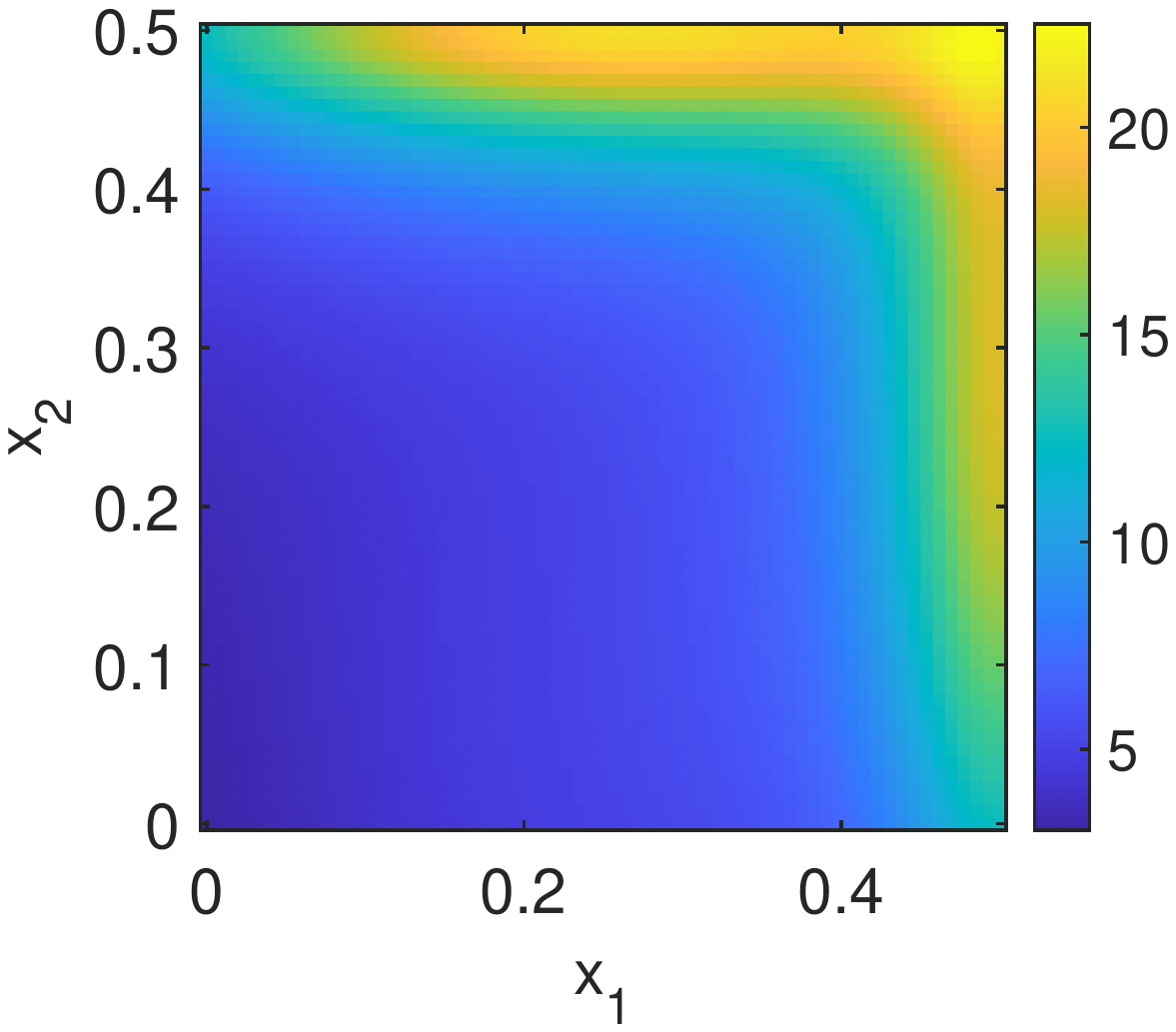}
  \includegraphics[width=0.3\textwidth]{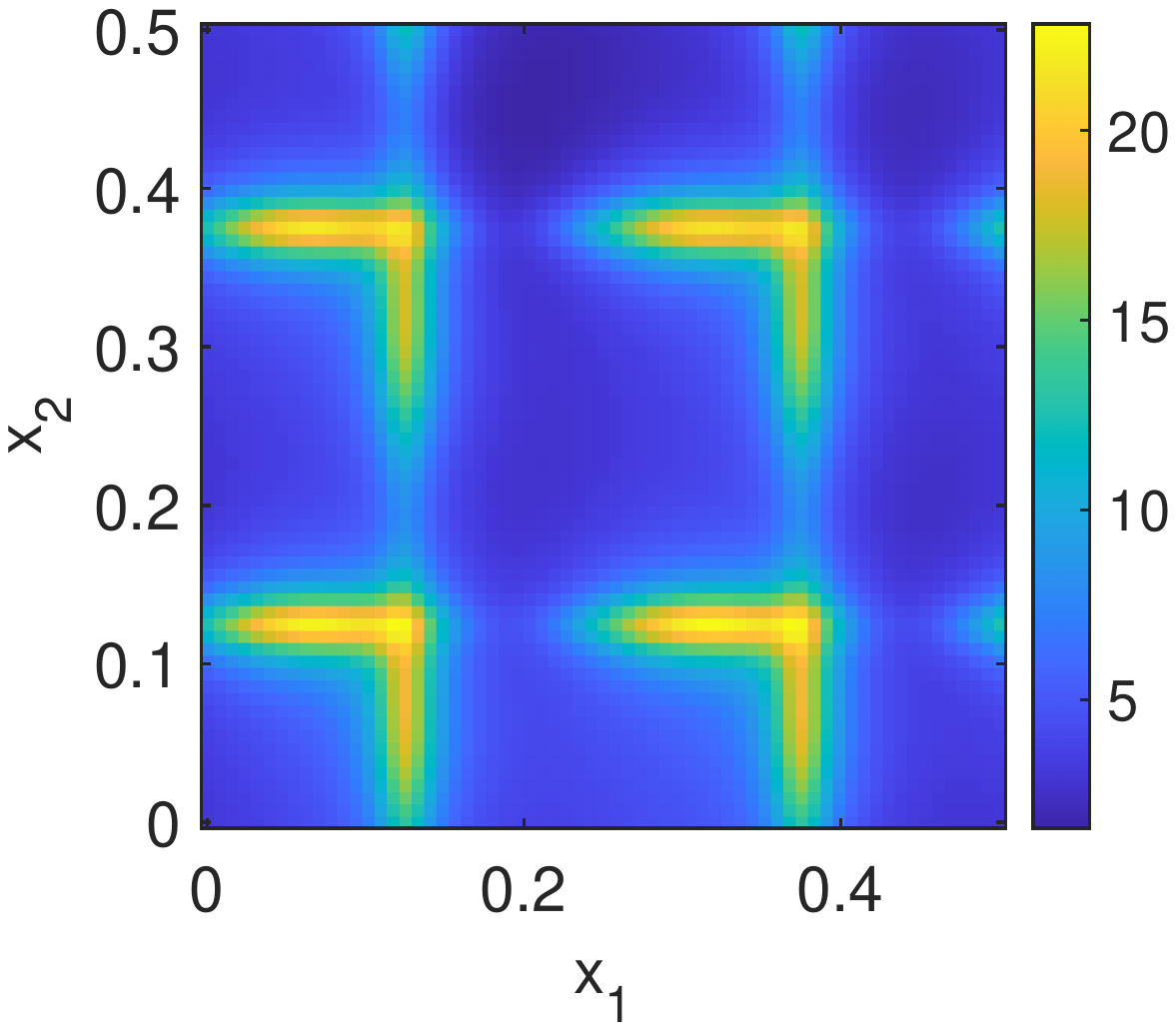}
  \includegraphics[width=0.3\textwidth]{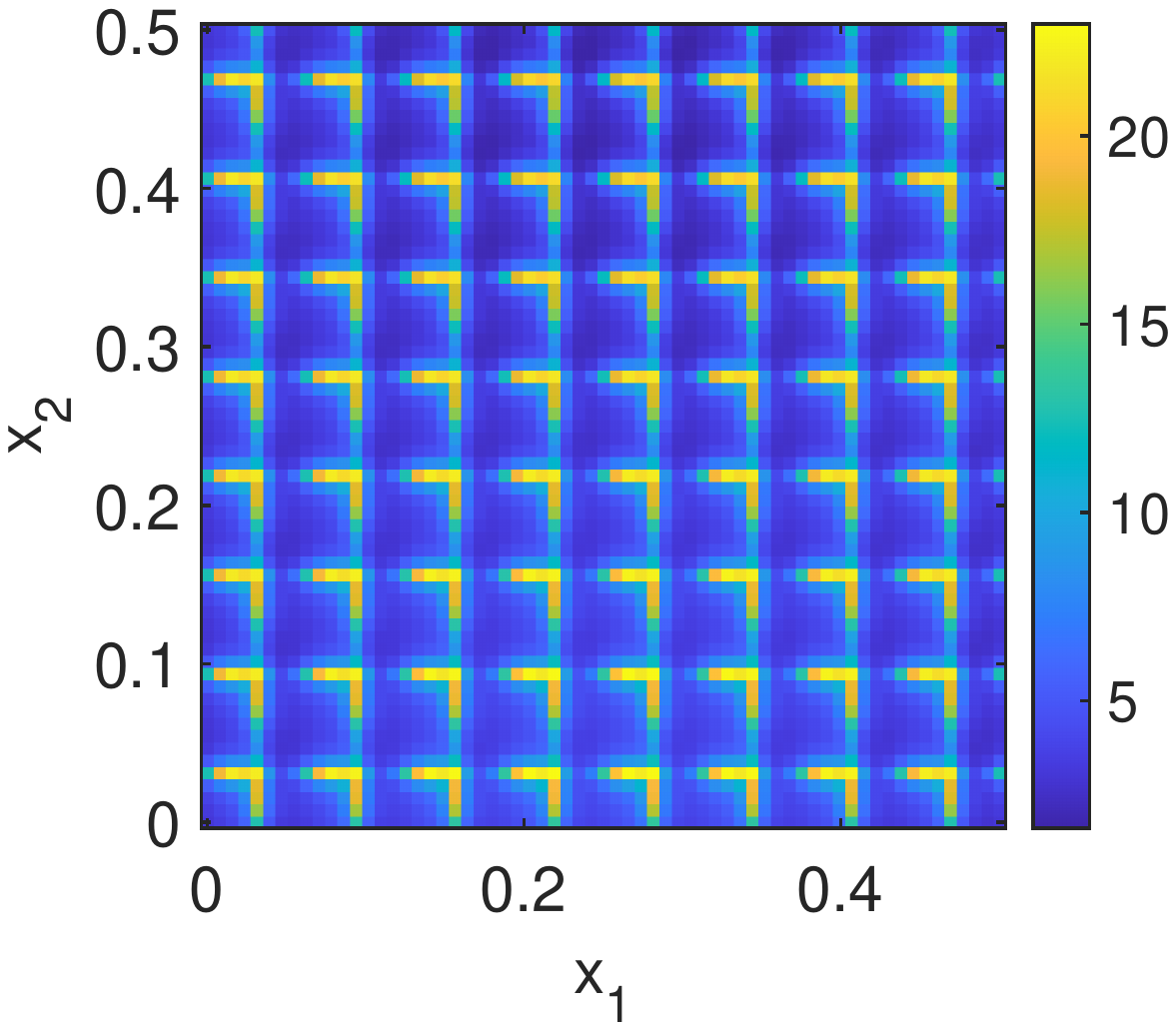}
  \caption{Multiscale medium $\kappa$~\eqref{eqn:medium} for $\eps = 2^0$ (left), $2^{-2}$ (middle), $2^{-4}$ (right).}
  \label{fig:medium}
\end{figure}

The reference solutions and the optimal basis are computed using the standard finite difference scheme on uniform grid with mesh size $h = 2^{-6} = \tfrac{1}{64}$, so that even the smallest $\eps$ is resolved by 8 grid points. The total degrees of freedom $N$ equals $63 \times 63 = 3969$.

In the computation below, we set $\mathsf{\Pi}_\mathsf{Y} = \mathsf{I}_N$ (identity matrix) so that the solution space is $\mathsf{Y} = L^2$. The source space $\mathsf{X}$ is set to be $H^p$ or $L^2$. Accordingly, $\mathsf{\Pi}_\mathsf{X}$ needs to be adjusted, see Appendix~\ref{appendix:Pi}.

We first present the optimal basis functions for different choices of $\eps$. The relative singular values of the discrete Green's matrix $\mathsf{G}$, when measured in $H^p$ and $L^2$ norms, are shown in \cref{fig:svd}.
We make two observations. First, the decay rate of singular values of $\mathsf{G}$ is independent of $\eps$. This point suggests strongly the oscillation level of the medium does not affect the number of important basis for the solution.
Second, the regularity of $\mathsf{X}$, the source space, has a  significant effect: A smoother space produces a faster decay of singular values, meaning that fewer basis functions are deemed important.
In \cref{fig:left_svectors} we showcase the first three basis functions for different choices of $\eps$.
We  note that although the singular values decay with almost identical rate across $\eps$, the profiles of singular vectors (basis functions) are quite different.

\begin{figure}[htbp]
  \centering
  \includegraphics[width=0.4\textwidth]{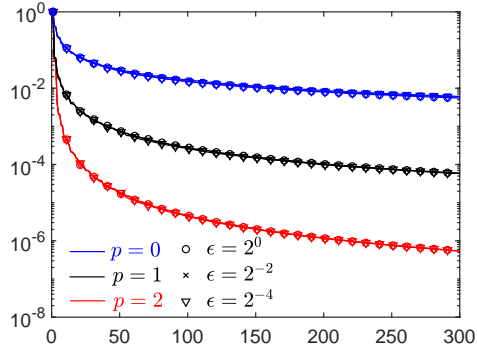}
  \caption{Singular values of the discrete Green's matrix $\mathsf{G}$ as an operator from $H^p$ to $L^2$ for $p=0$ (left), $p=1$ (middle), $p=2$ (right). The Green's matrix is computed from the linear elliptic equation~\eqref{eqn:linear_elliptic} with medium $\kappa$ defined in~\eqref{eqn:medium}.}
  \label{fig:svd}
\end{figure}

\begin{figure}[htbp]
  \centering
  \includegraphics[width=0.2\textwidth, height = 0.1\textheight]{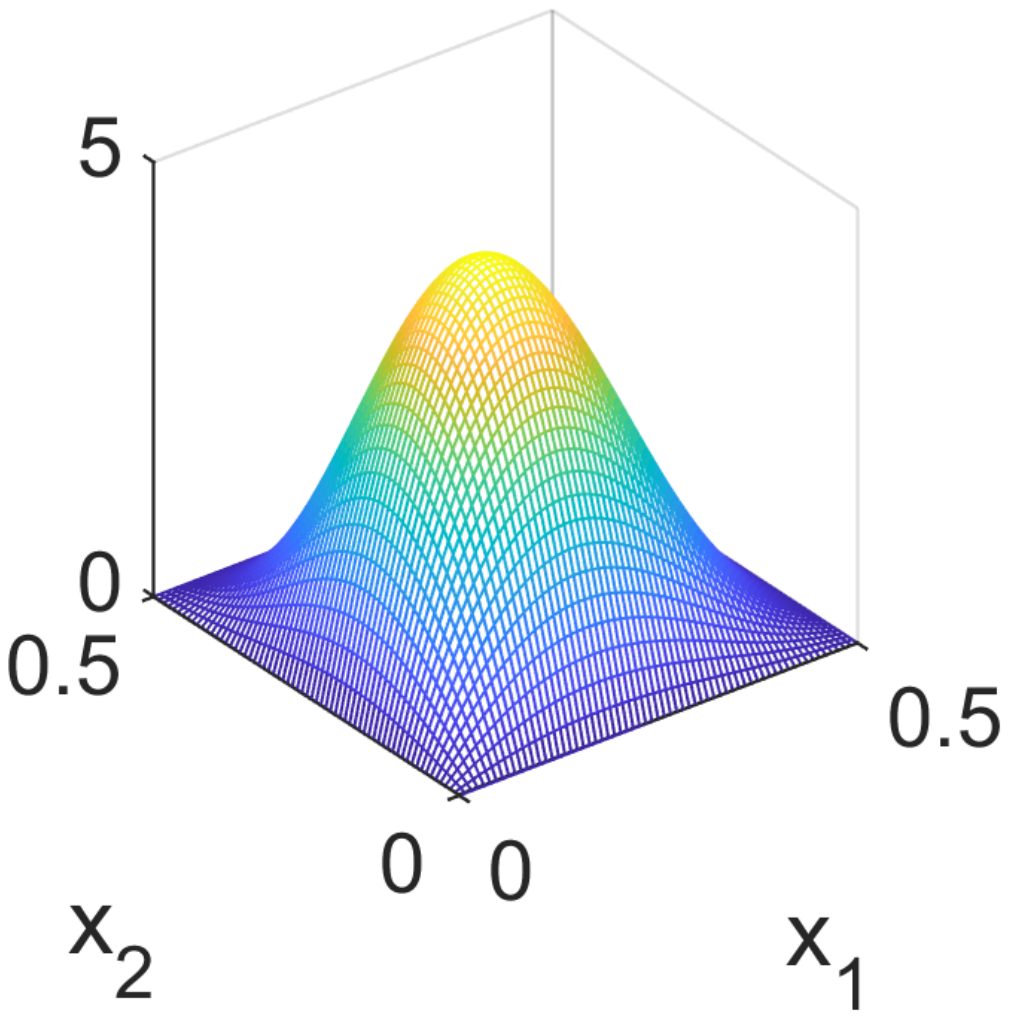}
  \includegraphics[width=0.2\textwidth, height = 0.1\textheight]{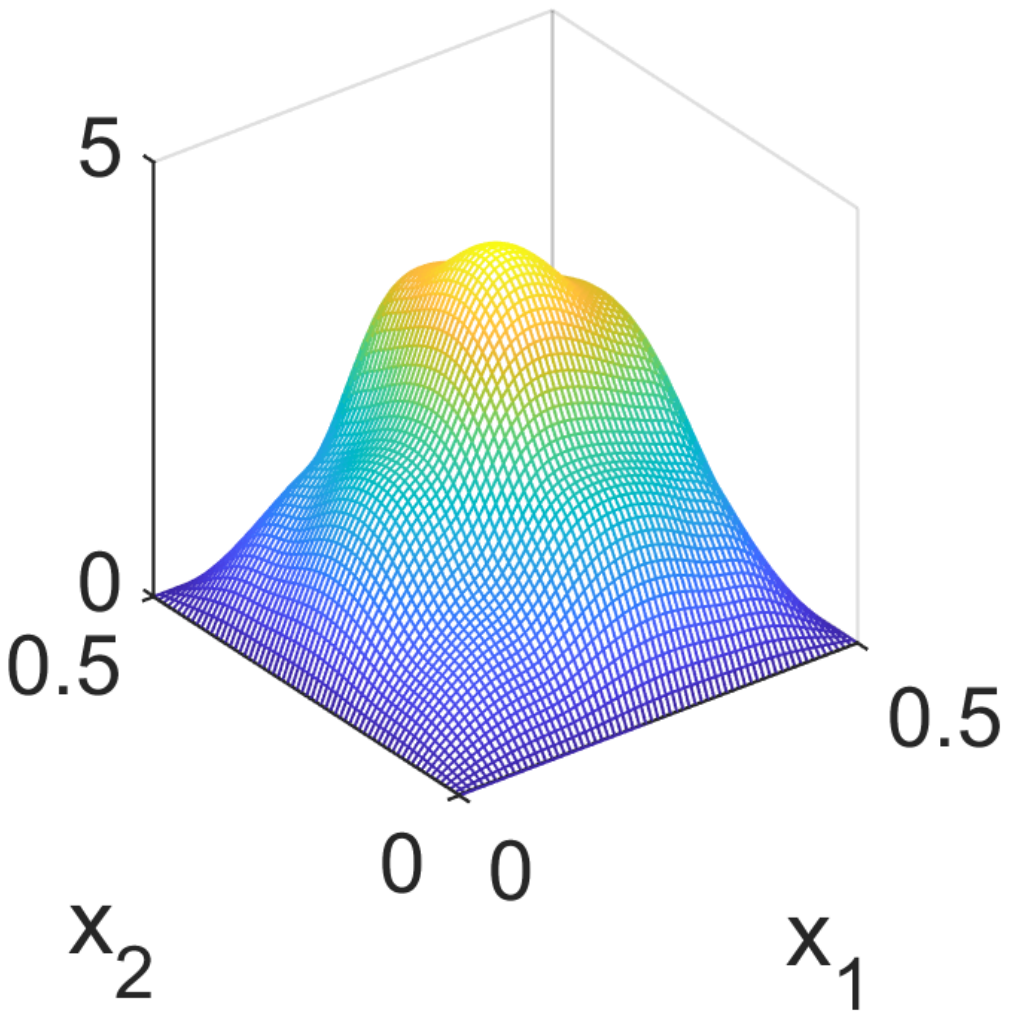}
  \includegraphics[width=0.2\textwidth, height = 0.1\textheight]{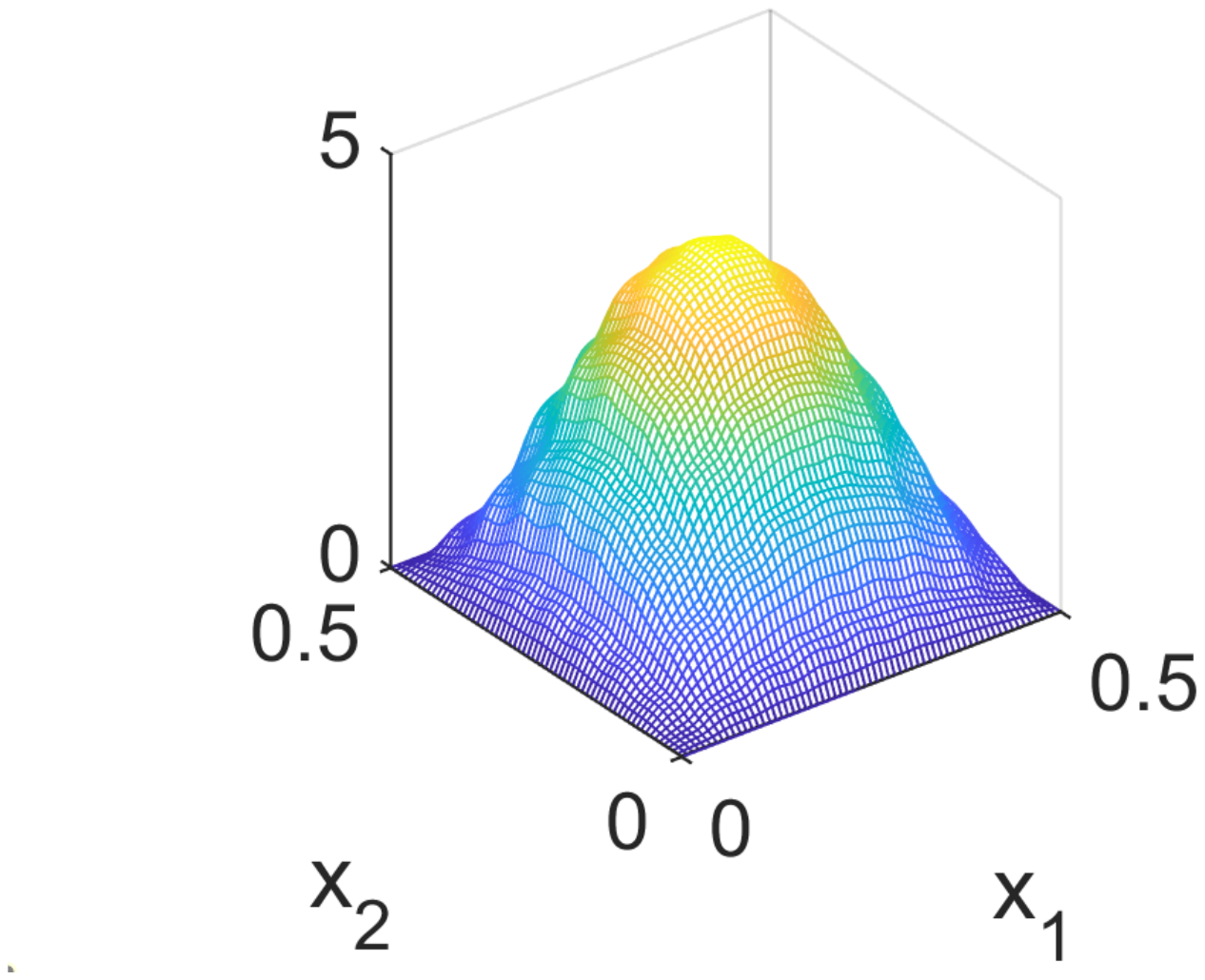}
  \\
  \includegraphics[width=0.2\textwidth, height = 0.1\textheight]{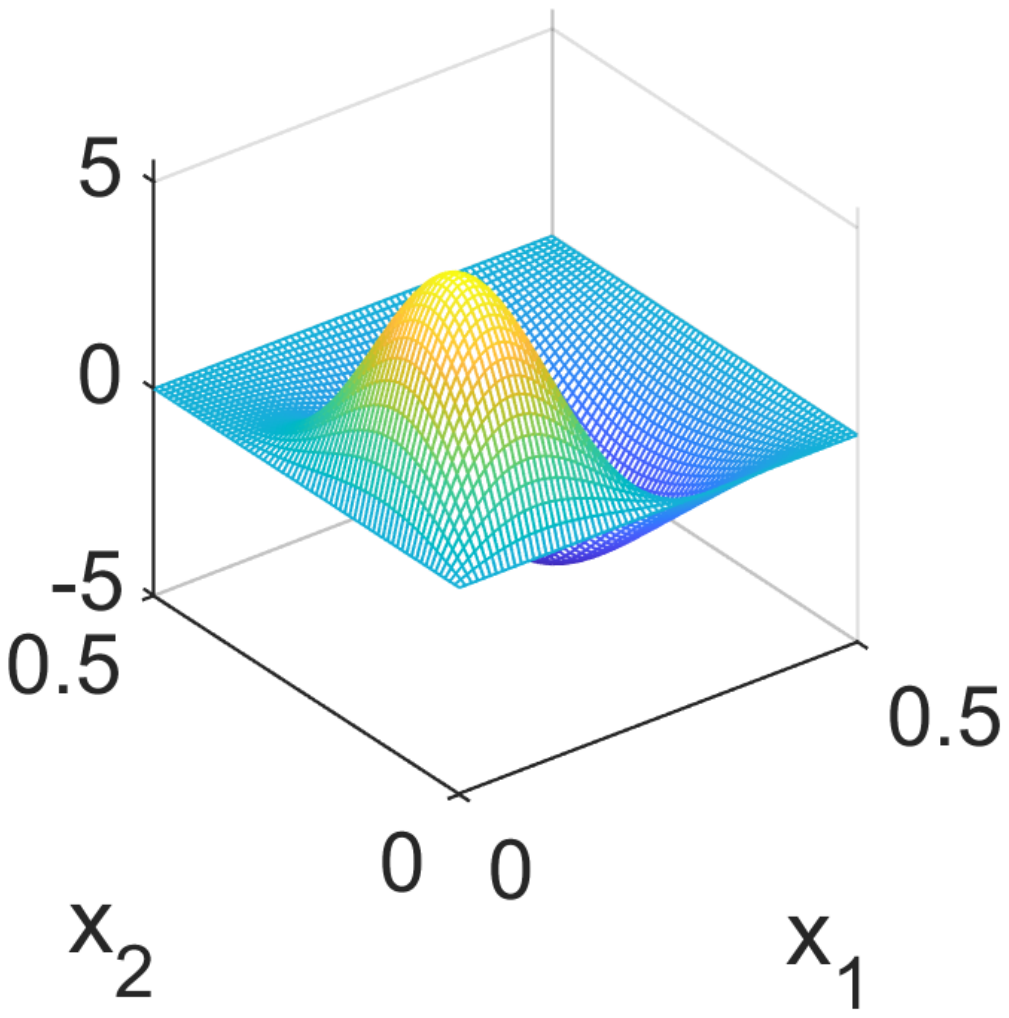}
  \includegraphics[width=0.2\textwidth, height = 0.1\textheight]{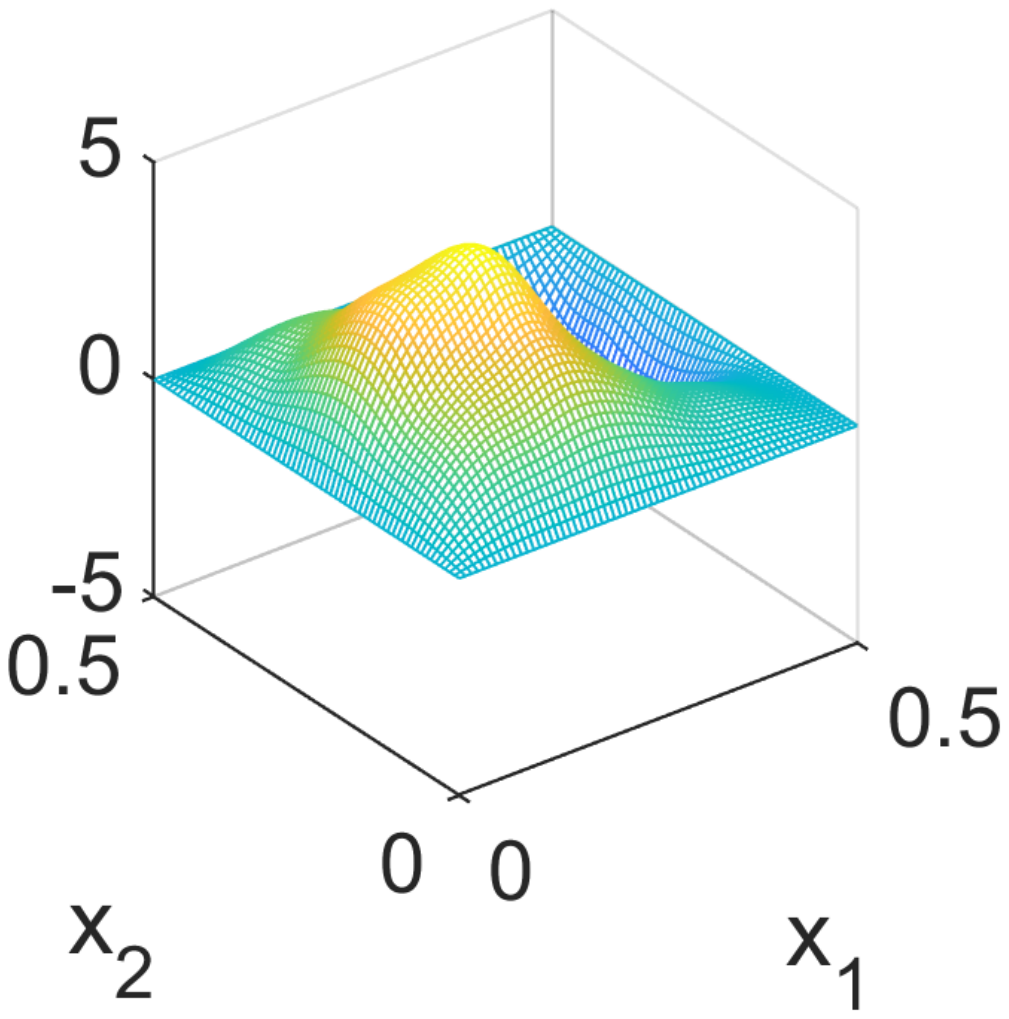}
  \includegraphics[width=0.2\textwidth, height = 0.1\textheight]{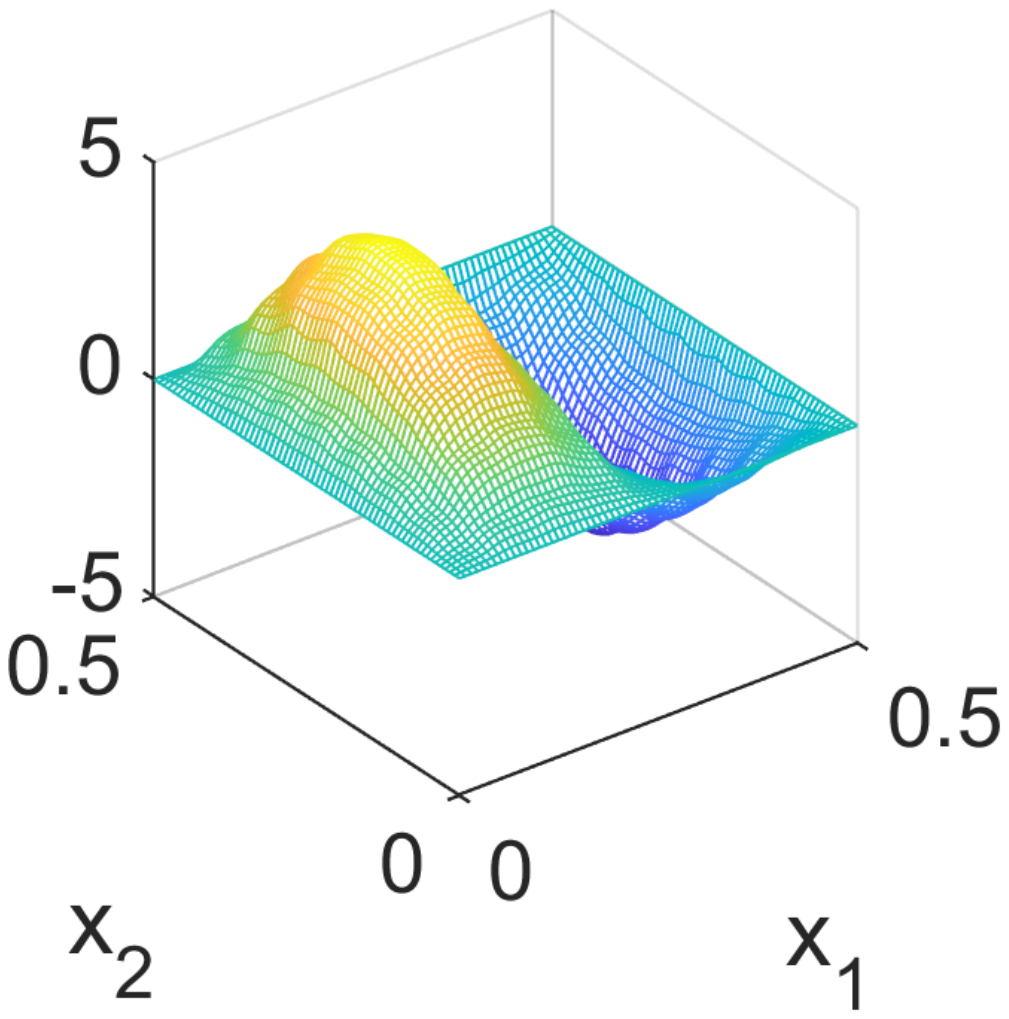}
  \\
  \includegraphics[width=0.2\textwidth, height = 0.1\textheight]{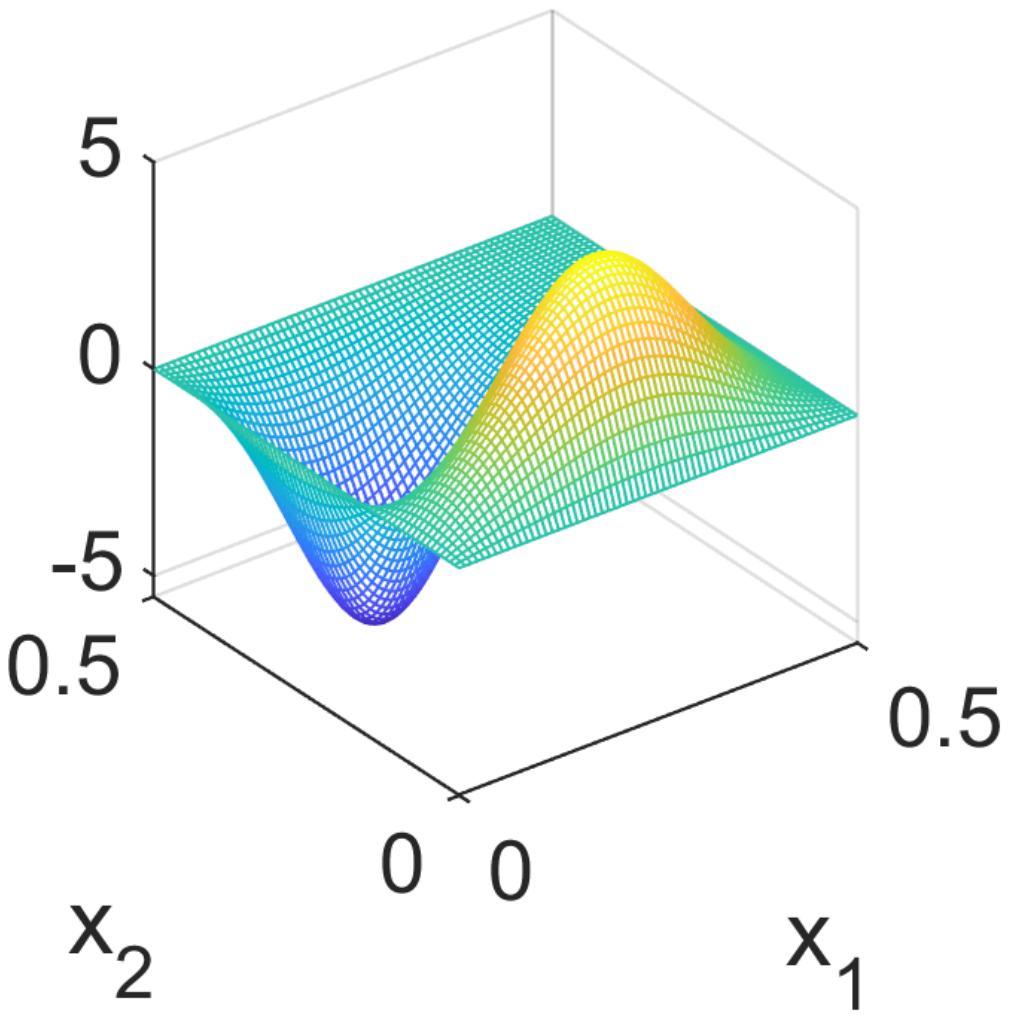}
  \includegraphics[width=0.2\textwidth, height = 0.1\textheight]{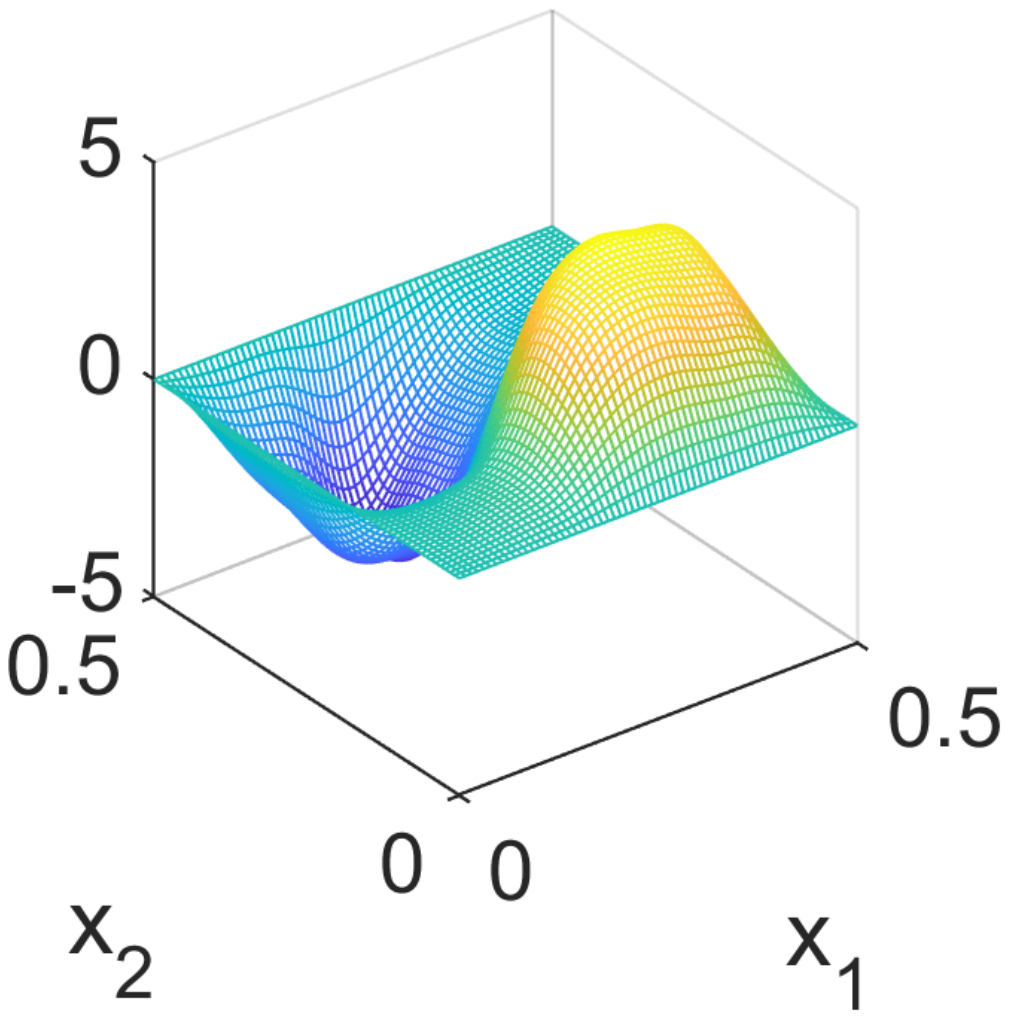}
  \includegraphics[width=0.2\textwidth, height = 0.1\textheight]{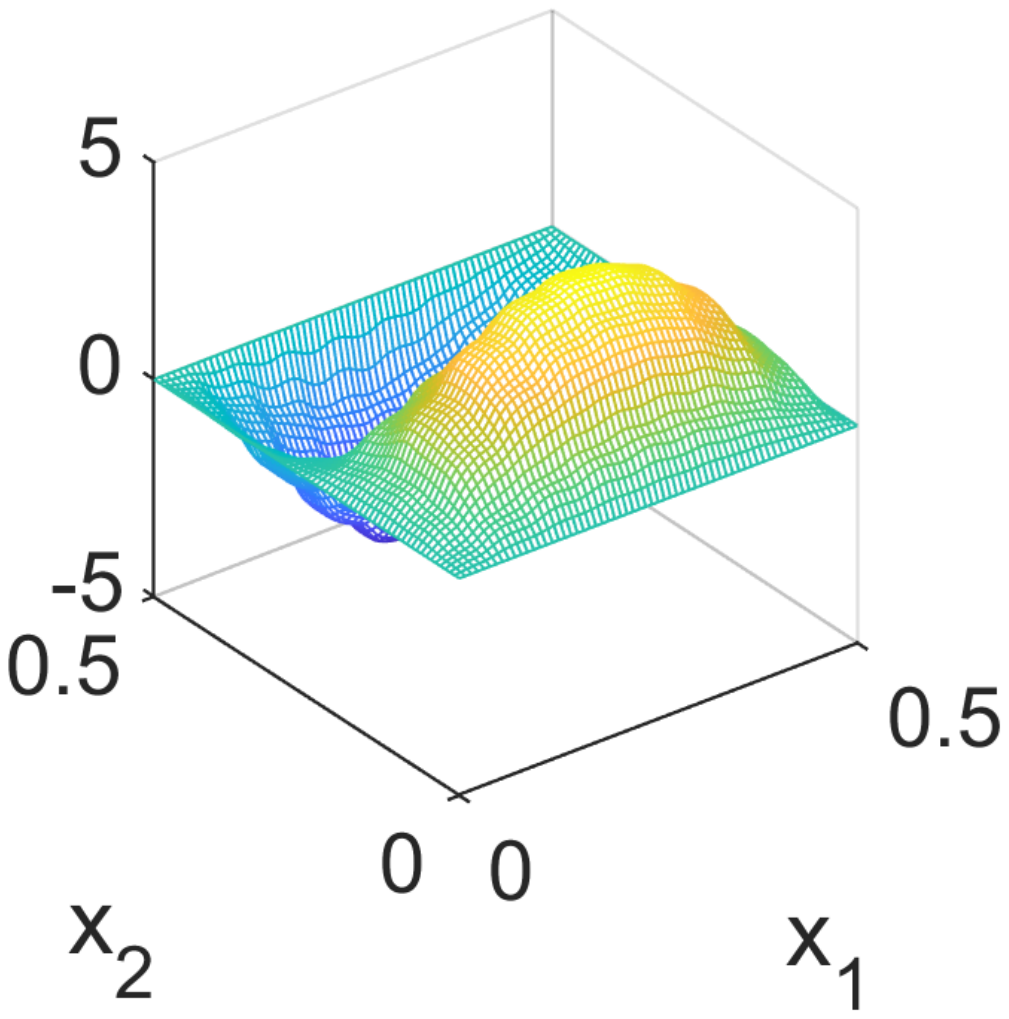}
  \caption{Left singular vectors $\mathsf{\hat{u}}_i$ of Green's matrix $\mathsf{G}$ computed from the rSVD for the linear elliptic equation~\eqref{eqn:linear_elliptic} with medium $\kappa$ defined in~\eqref{eqn:medium} for $\eps=2^{0}$ (left), $2^{-2}$ (middle), $2^{-4}$ (right). The rows from top to bottom show, respectively, the first, second and third singular vectors. The singular vectors are computed using weight matrices $\mathsf{\Pi}_\mathsf{X} = \mathsf{\Pi}_\mathsf{Y} = \mathsf{I}$.
  }
  \label{fig:left_svectors}
\end{figure}

These bases  capture well the features of the reference solutions. We test the performance of these basis function using the source term  $f(x_1,x_2) = \sin(4\pi x_1)\sin(4\pi x_2)$. In \cref{fig:ref_soln} we plot the reference solution for different values of $\eps$ and  show the decay in $L_2$ and energy norm (defined in~\eqref{eqn:energy_norm_def}).

\begin{figure}[htbp]
  \centering
  \includegraphics[width=0.25\textwidth]{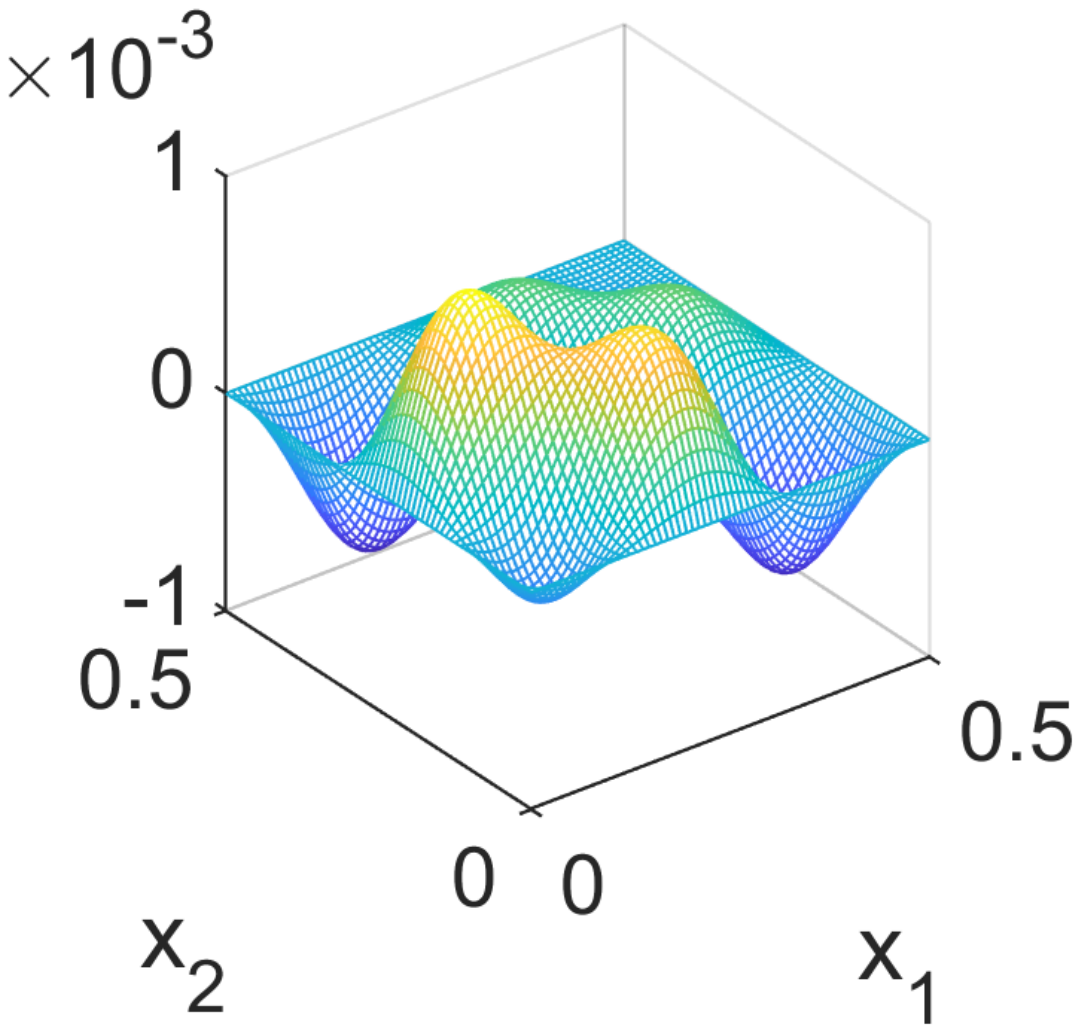}
  \includegraphics[width=0.25\textwidth]{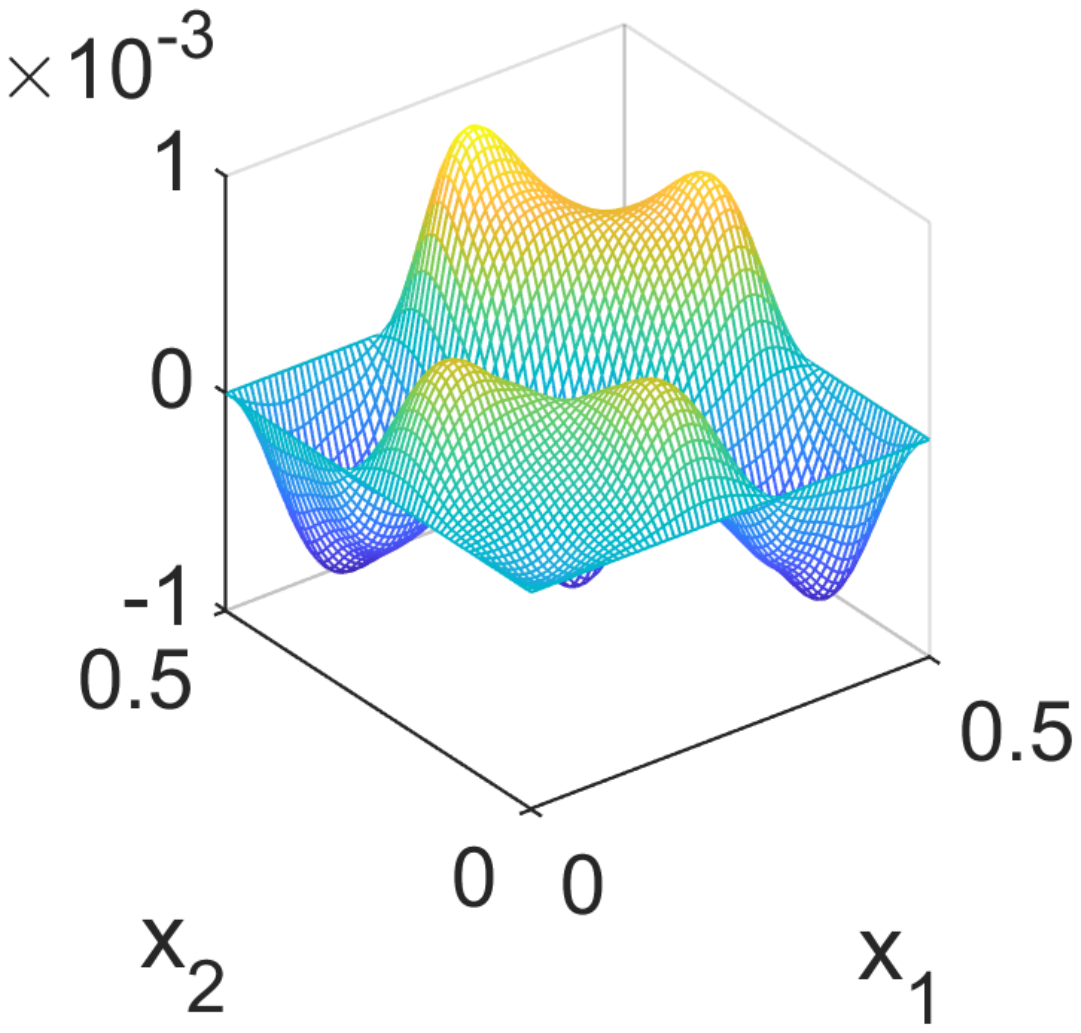}
  \includegraphics[width=0.25\textwidth]{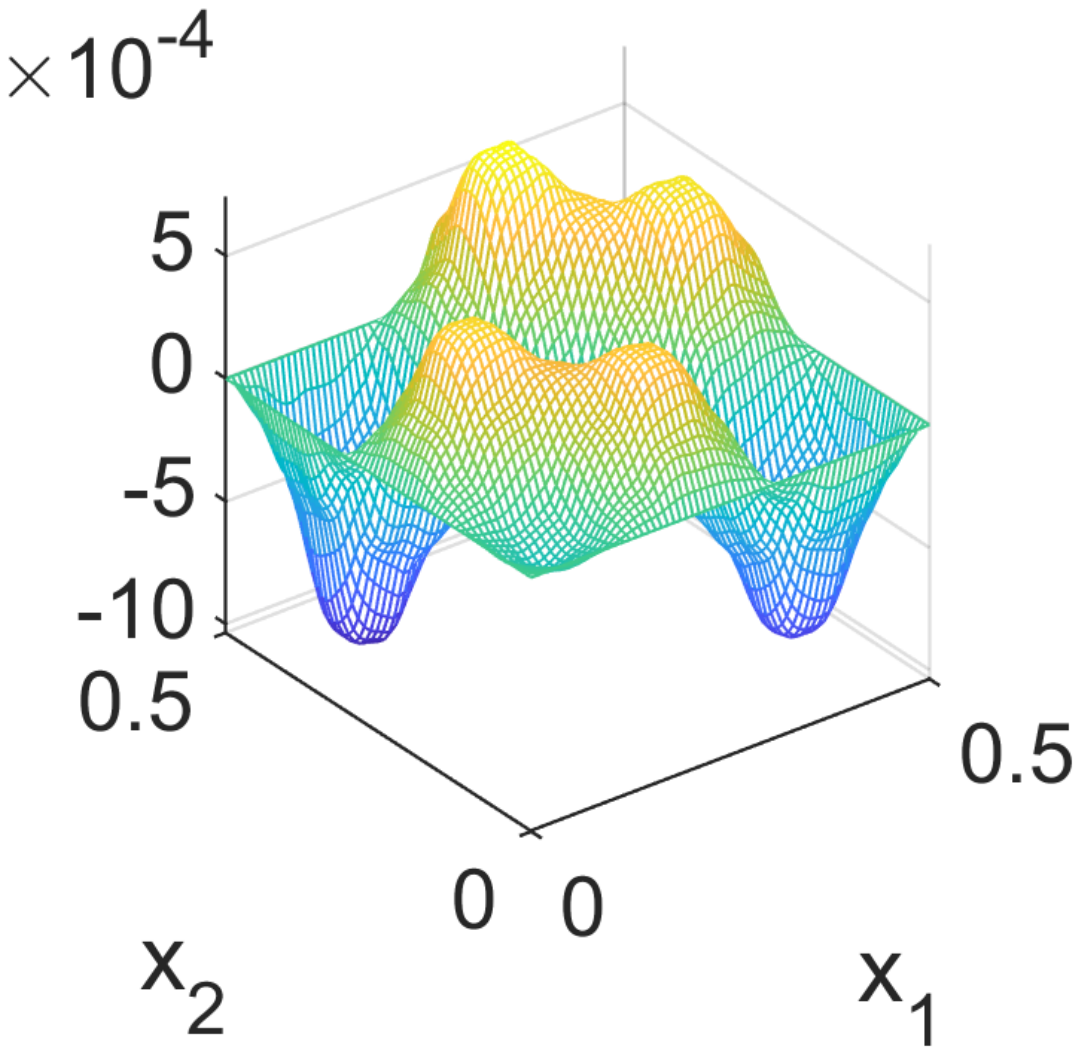} \\
  \includegraphics[width=0.25\textwidth]{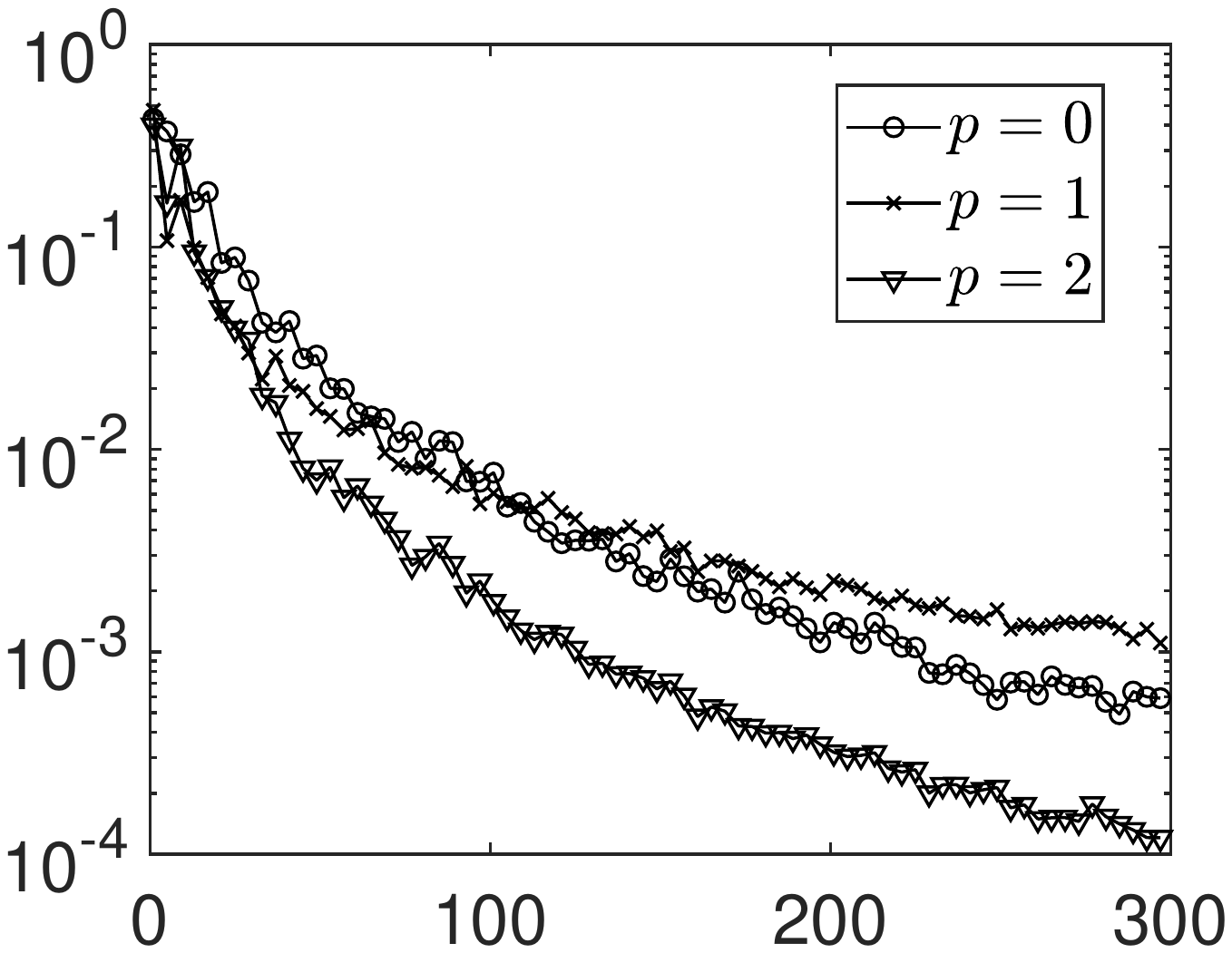}
  \includegraphics[width=0.25\textwidth]{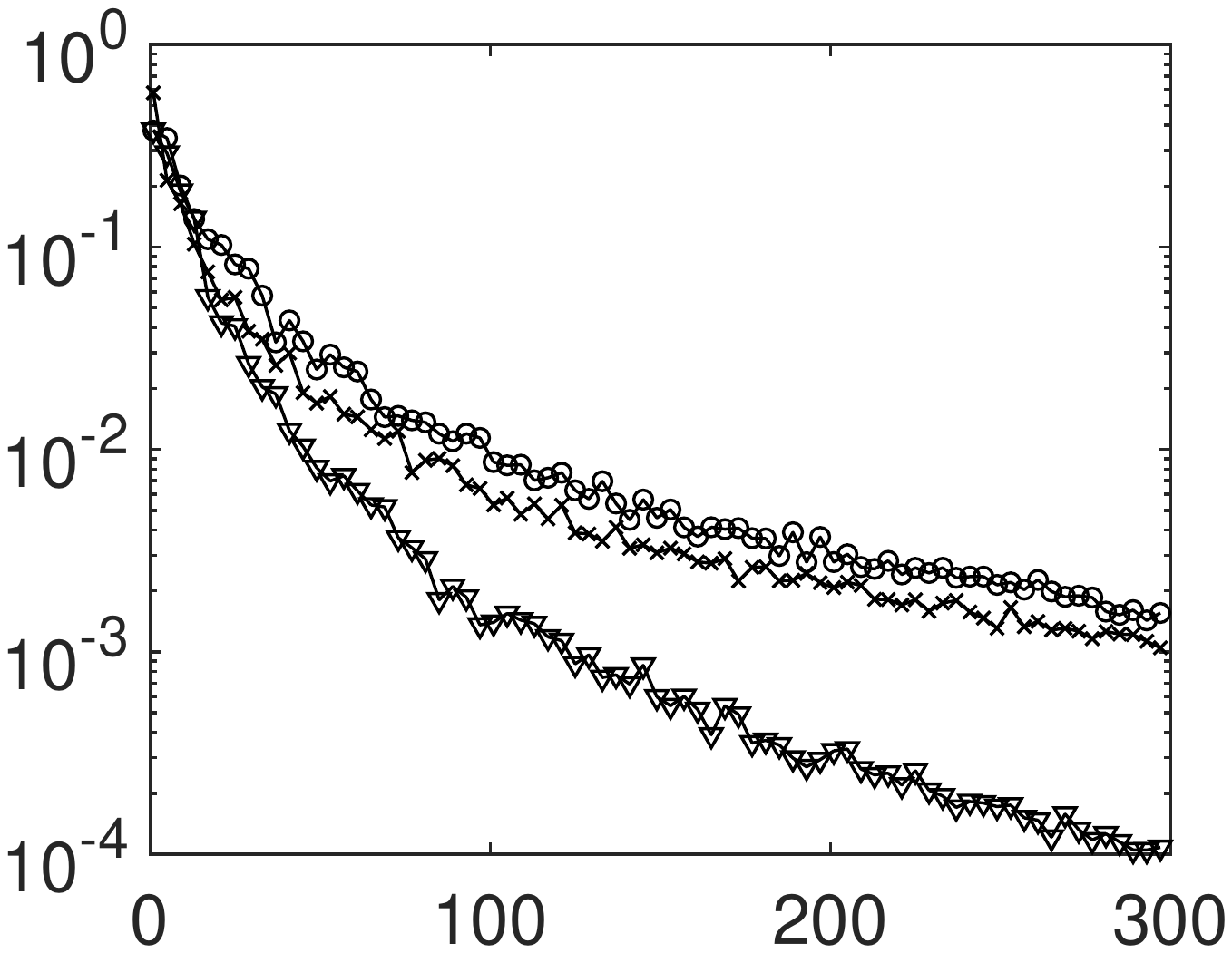}
  \includegraphics[width=0.25\textwidth]{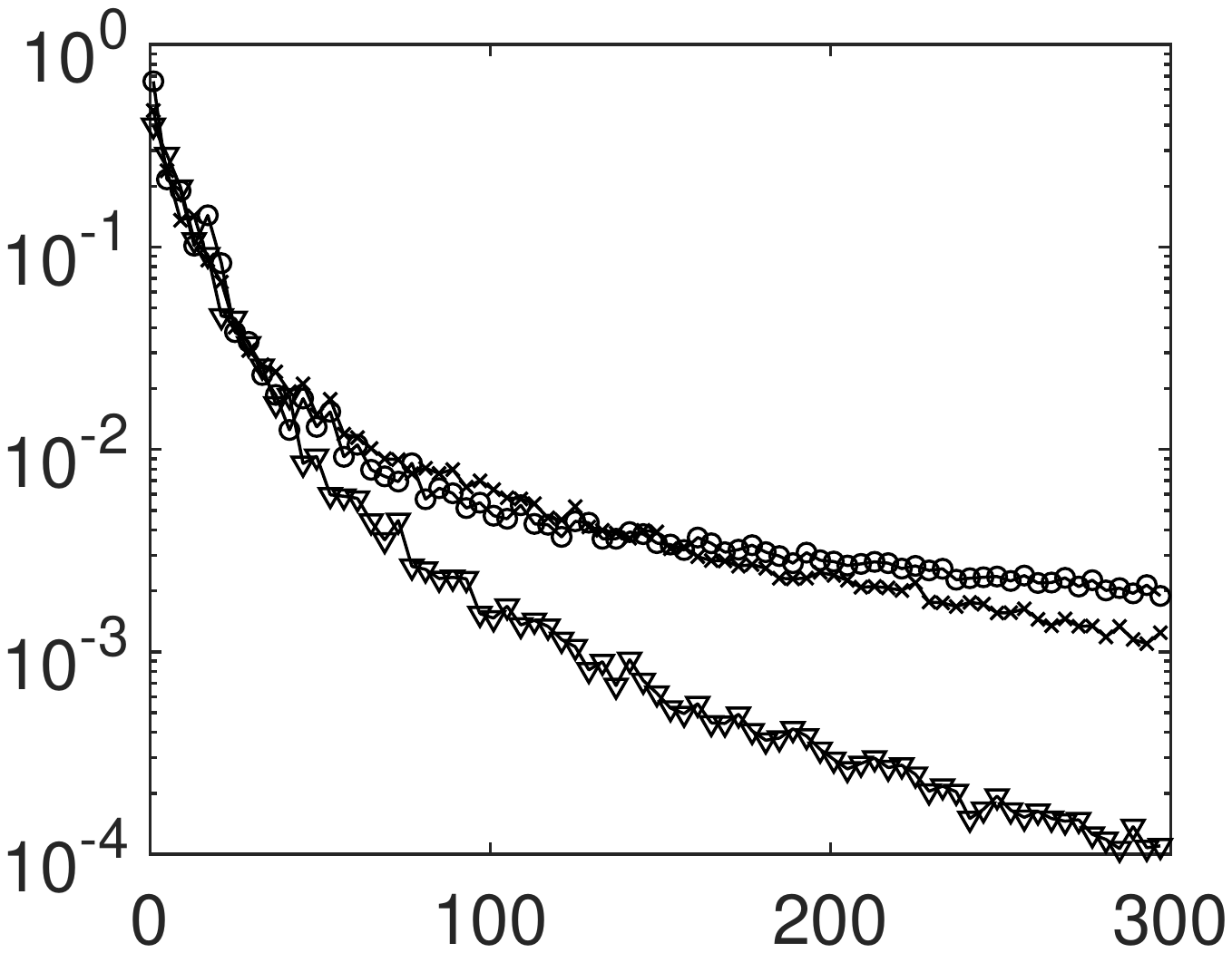}
  \\
  \includegraphics[width=0.25\textwidth]{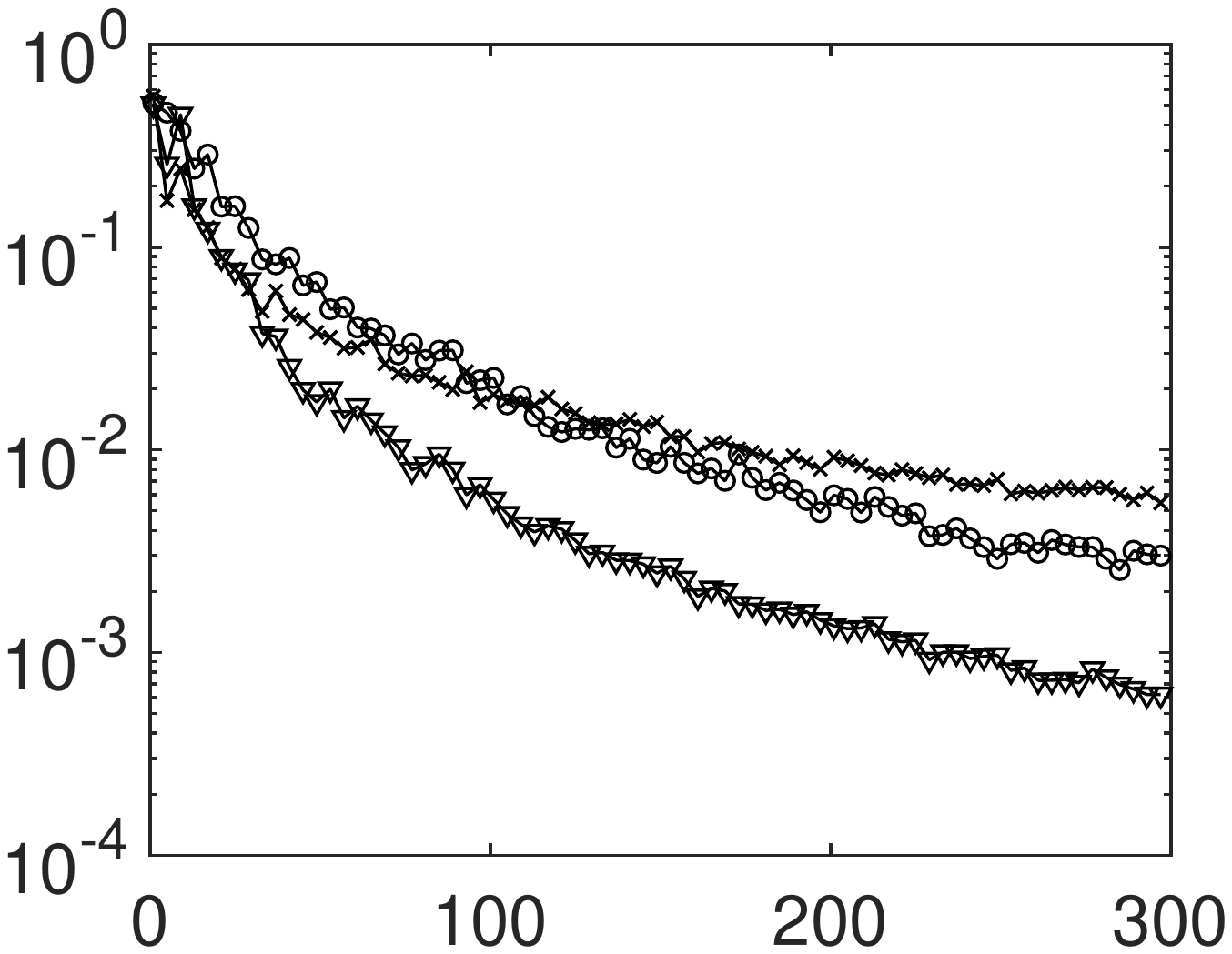}
  \includegraphics[width=0.25\textwidth]{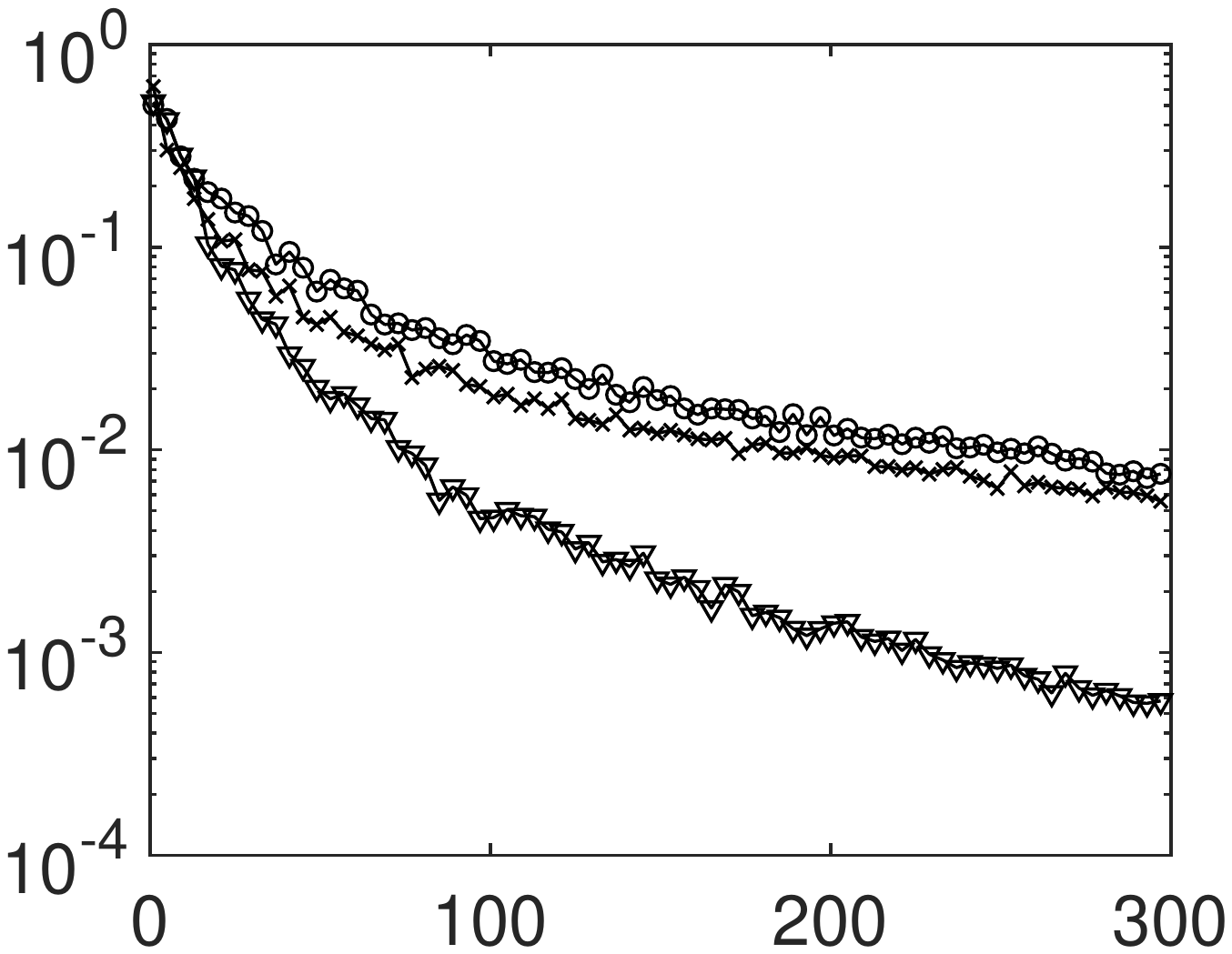}
  \includegraphics[width=0.25\textwidth]{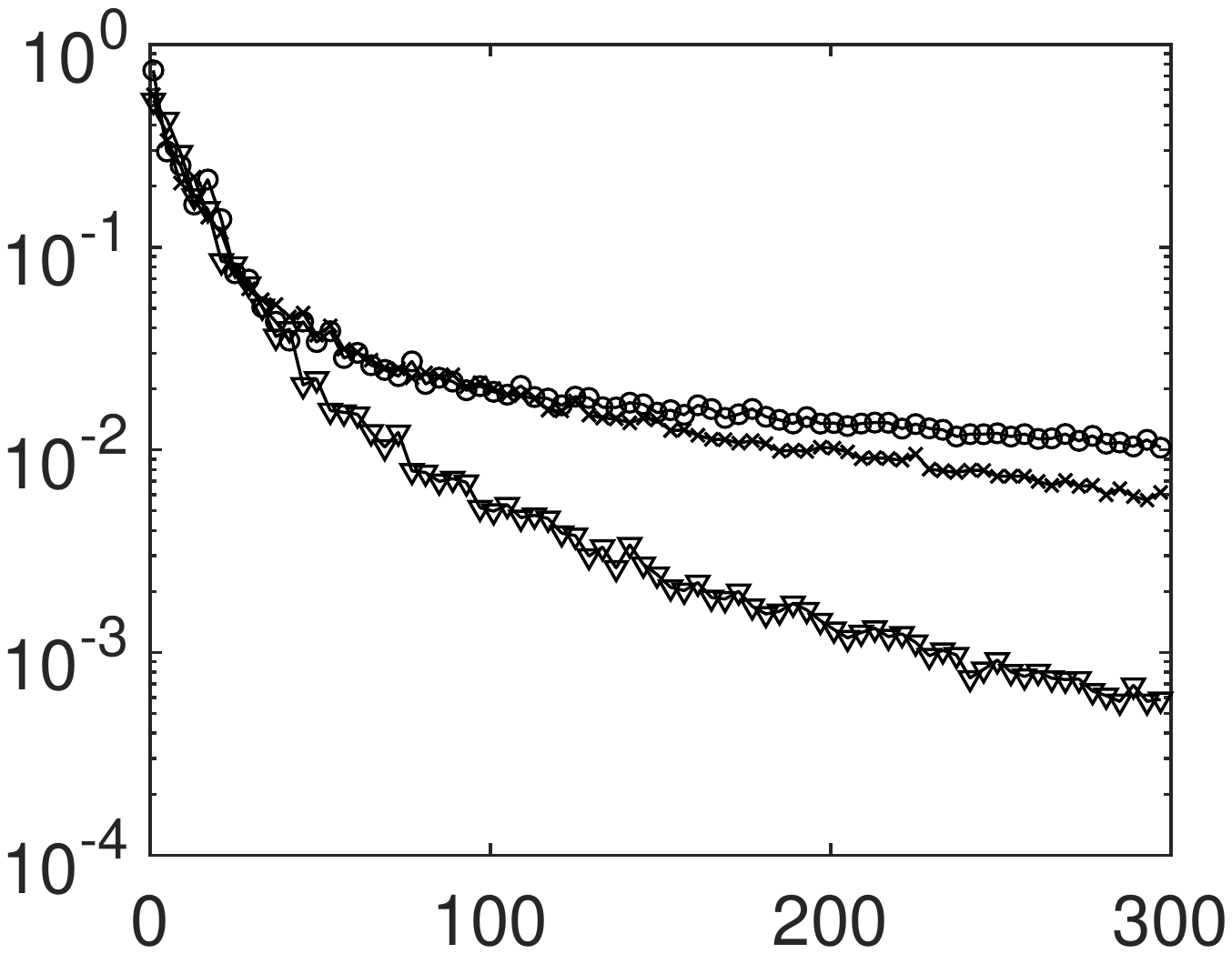}
  \caption{The plots in the first row show the reference solutions for the linear elliptic equation~\eqref{eqn:linear_elliptic} with $\eps=2^{0}$ (left), $2^{-2}$ (middle) and $2^{-4}$ (right) using the medium~\eqref{eqn:medium}. The plots on the second row present the decay of $L^2$-norm as the number of basis increases with $\eps=2^{0}$ (left), $2^{-2}$ (middle) and $2^{-4}$ (right). The plots on the third row present the decay of energy-norm with $\eps=2^{0}$ (left), $2^{-2}$ (middle) and $2^{-4}$ (right). We should note that the $L^2$ error achieves $10^{-4}$ with $300$ basis, in comparison to $3969$ degree of freedom.
  }
  \label{fig:ref_soln}
\end{figure}

\subsubsection{Radiative transport equation}\label{sec:RTE}

The multiscale radiative transport equation is defined as follows:
\begin{equation}\label{eqn:RTE}
\begin{cases}
v\cdot \nabla_x u + (\sigma_\mathrm{a}(x) + \sigma_\mathrm{s}(x)) u =  \sigma_\mathrm{s}(x) \mathcal{K} u(x,v)   + f(x,v),\quad& x\in\Omega\,, v\in\Sb^1\,, \\
u(x,v) = 0,\quad (x,v)\in\Gamma_-\,,
\end{cases}
\end{equation}
where $\Sb^1\subset\Rb^2$ denotes the unit circle on the plane. We denote the spatial variables by $x = (x_1,x_2)$ and parameterize the velocity vector by its angle, that is, $v = (\cos\theta,\sin\theta)\in\Sb^1$, $\theta\in[0,2\pi)$. The homogeneous boundary condition is imposed on the incoming part of the boundary:
\[
\Gamma_- = \{ (x,v)\in\partial\Omega\times\Sb^1: x\in\partial\Omega, \,\, -v\cdot n_x >0\}
\]
where $n_x$ is the exterior normal direction at $x\in\partial\Omega$. We define the scattering operator $\mathcal{K}$ to have the following form
\begin{equation}\label{eqn:scat_op}
    \mathcal{K} u(x,v) = \int_{\Sb^{1}} K(v,v') u(x,v') \rmd \mu(v'),
\end{equation}
where $\mu$ is the normalized spherical measure on $\Sb^{1}$. We use the Henyey-Greenstein phase function~\cite{Ar1999:optical,HeGr1941:diffuse} as the scattering kernel
\begin{equation}\label{eqn:scat_ker}
    K(v,v') = C \frac{1-g^2}{(1+g^2-2g v\cdot v')^{3/2}}\,,
\end{equation}
with $C$ being chosen to satisfy $\int_{\mathbb{S}^{1}} K(v,v')\rmd \mu(v') = 1$. We set $g = 0.5$ in all the following experiments.
We choose the scattering coefficient $\sigma_{\mathrm{s}}^{\eps}$ in the numerical experiments
\begin{equation} \label{eqn:scat_rte}
\sigma_{\mathrm{s}}(x_1,x_2) =
\frac{1}{\eps_1}\kappa(x_1,x_2;\eps_2)\,,
\end{equation}
where the function $\kappa$ is defined in~\eqref{eqn:medium} with $\eps$ replaced by $\eps_2$, and the absorption coefficient $\sigma_{\mathrm{a}}$
\begin{equation} \label{eqn:absp_rte}
\sigma_{\mathrm{a}}(x_1,x_2) =
\eps_1\left( 1 + \sin(4x_1^2x_2^2) + \frac{1.1+\cos(2\pi x_1/\eps_2)}{1.1+\cos(2\pi x_2/\eps_2)}  + \frac{1.1+\sin(\pi x_2/\eps_2)}{1.1+\sin(\pi x_1/\eps_2)} \right)\,.
\end{equation}
The absorption coefficient for different values of $\eps_2$ are plotted in \cref{fig:aborsption}. The small parameter $\eps_1$ is called the Knudsen number and $\eps_2$ is the small medium oscillation period. The parameter $(\eps_1,\eps_2)$ characterizes different regimes of the RTE~\eqref{eqn:RTE}. In general, the limiting regime as $(\eps_1,\eps_2)\to0$ can be taken through different routes, and it could end up at different limits~\cite{abdallah2012diffusion,goudon2003homogenization,goudon2001diffusion}. An advantage of the  method presented in this paper is that it can handle different regimes of RTE in a unified way.

\begin{figure}[htbp]
  \centering
  \includegraphics[width=0.3\textwidth]{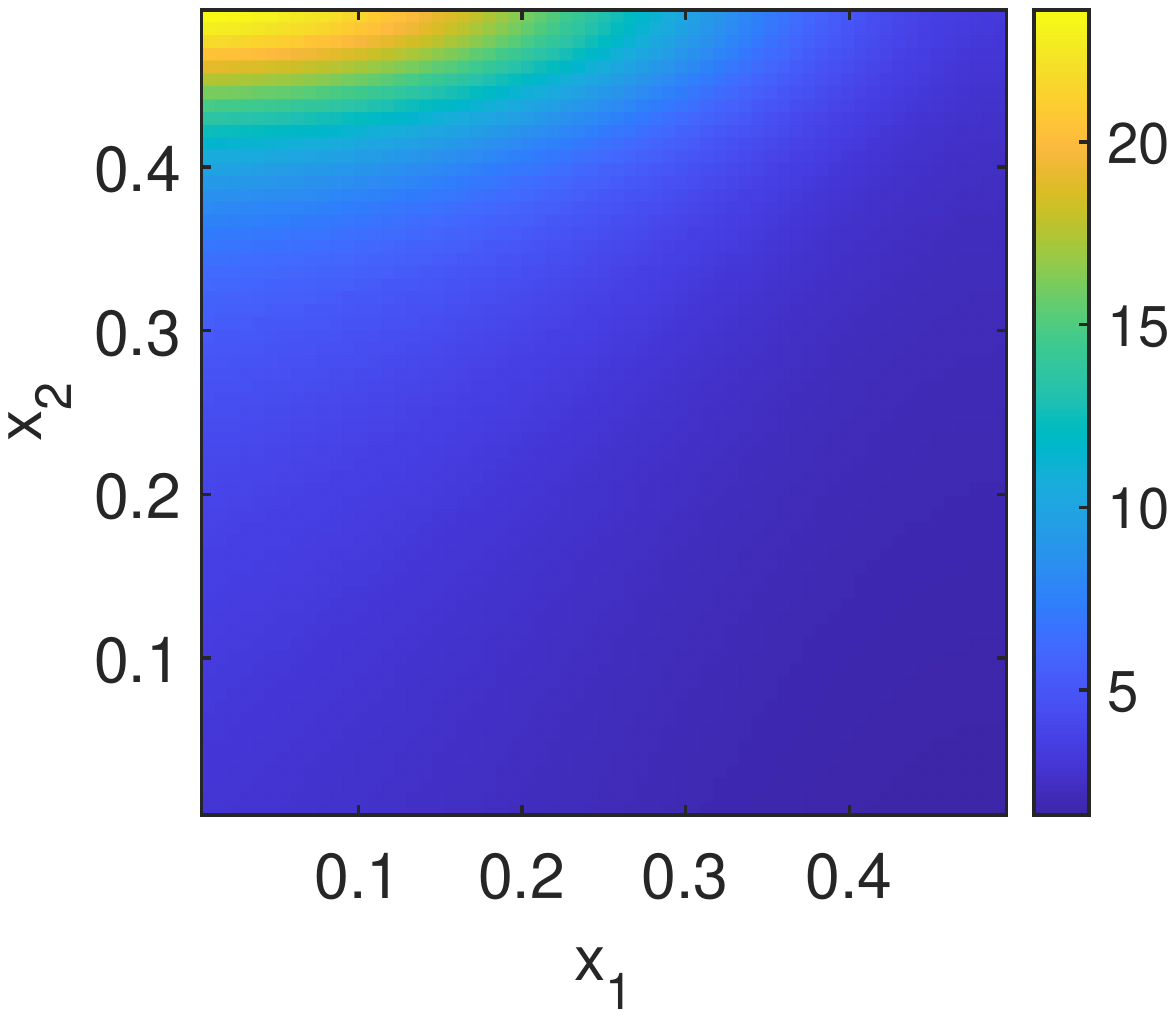}
  \includegraphics[width=0.3\textwidth]{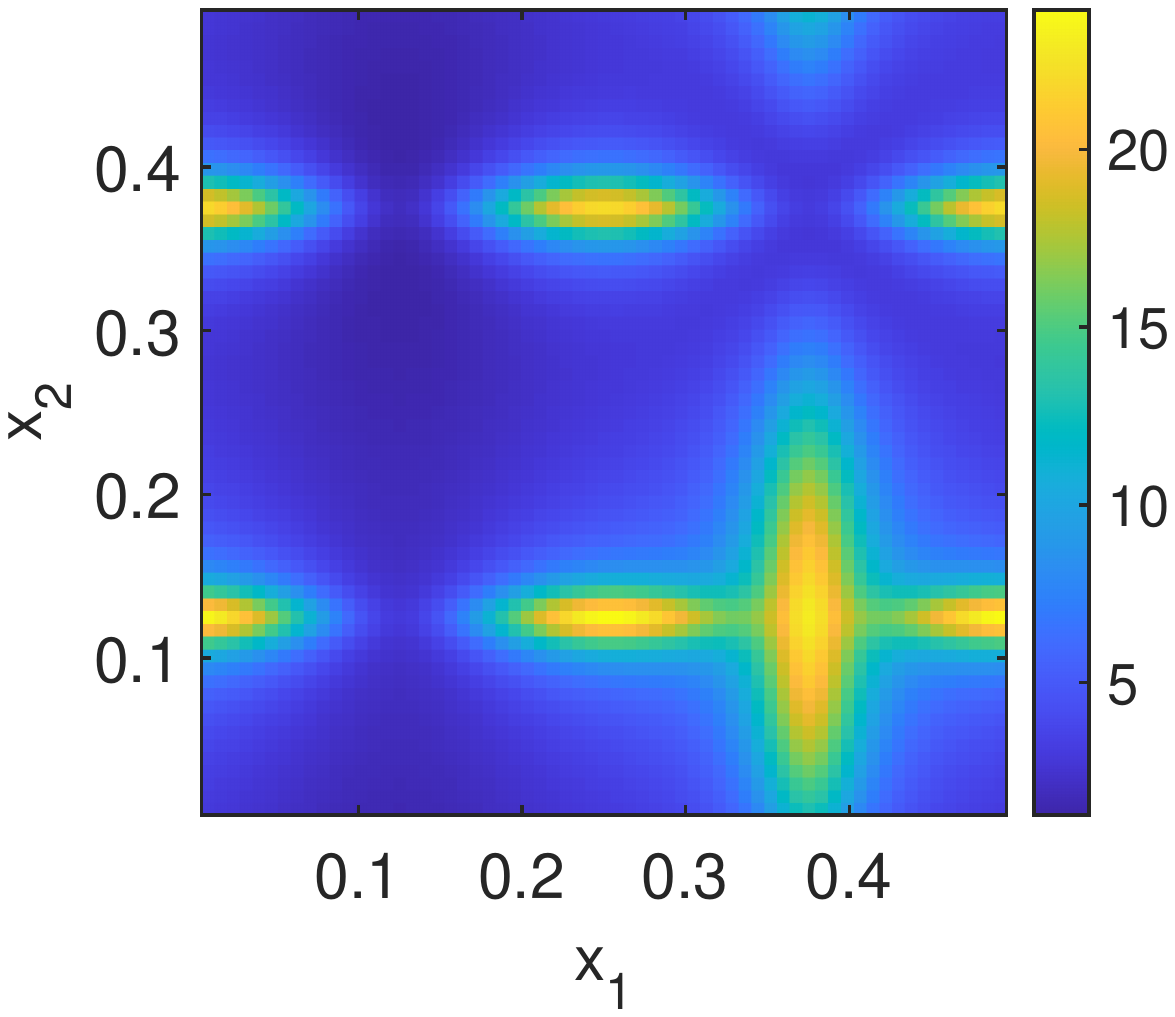}
  \includegraphics[width=0.3\textwidth]{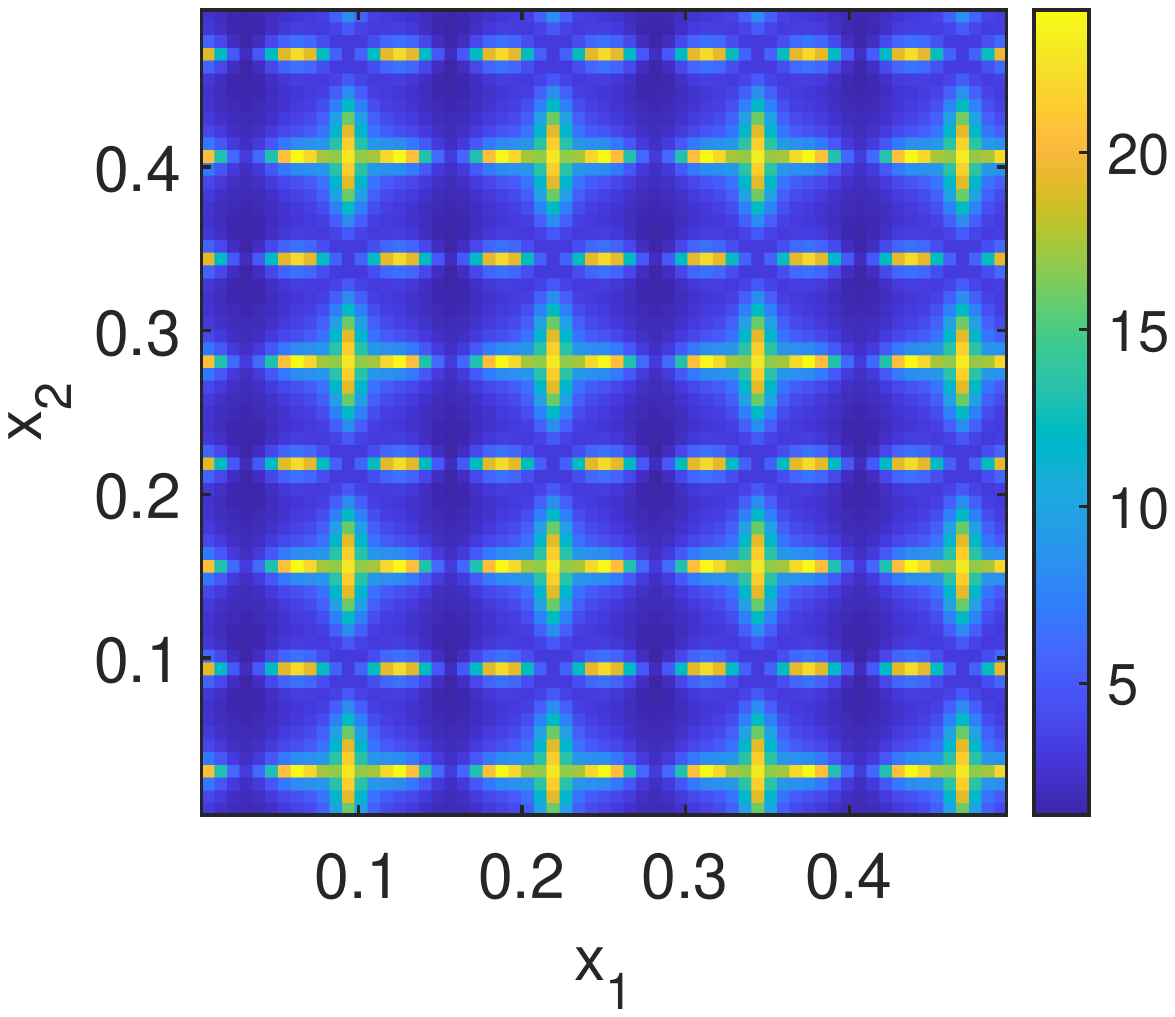}
  \caption{Absorption coefficient $\sigma_{\mathrm{a}}$~\eqref{eqn:absp_rte} for $\eps_2 = 1$(left), $2^{-2}$ (middle), $2^{-4}$ (right). We set $\eps_1 = 1$ in the plots.}
  \label{fig:aborsption}
\end{figure}

The reference solution and the global basis are both computed using the up-wind scheme for the transport term and the trapezoidal rule for the scattering operator. The spatial mesh size is $h = 2^{-6} = \tfrac{1}{64}$ so that $\eps_i$ are resolved. This leads to $N_x=63$ in $x_1$- and $x_2$-direction. The number of quadrature points is $N_v = 40$. The total Degrees Of Freedom equals $63\times 63\times 40 = 158760$ in the discrete equation.

It is shown in~\cite{AgAg:1998boundary} that the smoothness of the source function in the velocity direction does not lead to higher regularity of the RTE solution, so $L_2$ norm is chosen for $v$. Therefore,
the weight matrix $\mathsf{\Pi}_\mathsf{X}$ is chosen to that takes the spatial derivatives into account but places only constant weights on the velocity domain.
Denoting $\mathsf{f} = [\mathsf{f}_{ijl}]_{ijl}$, where $i,j$ denote the indices in the spatial direction and $l$ denotes the index in the velocity direction, we will place discrete Sobolev $(p,0)$-norm on $\mathsf{f}$, with  $p$ denoting the order of spatial derivative and $\mathsf{\Pi}_{\mathsf{X}}$ adjusted accordingly; see Appendix~\ref{appendix:Pi}.

Equipped with different $(p,0)$-norm, the decay rate of singular
values for the Green's matrix changes. In \cref{fig:RTE_svd} we
show the relative singular values for different choices of $p$,
$\eps_1$, and $\eps_2$. Similar to the observation in the previous
example, the decay rate of singular values is insensitive to the
choice of the medium. Various choices of values for $\eps_2$ lead to
almost identical decay rates. At the same time, different $p$-values
reflects the smoothness of the source term. As $p$ increases, the
decay becomes faster. In \cref{fig:RTE_left_svectors_1}, we plot
the most dominant basis (left singular vector) for various of choices
of $(\eps_1,\eps_2)$, where we set $(p,0)=(1,0)$.

\begin{figure}[htbp]
  \centering
  \includegraphics[width=0.3\textwidth]{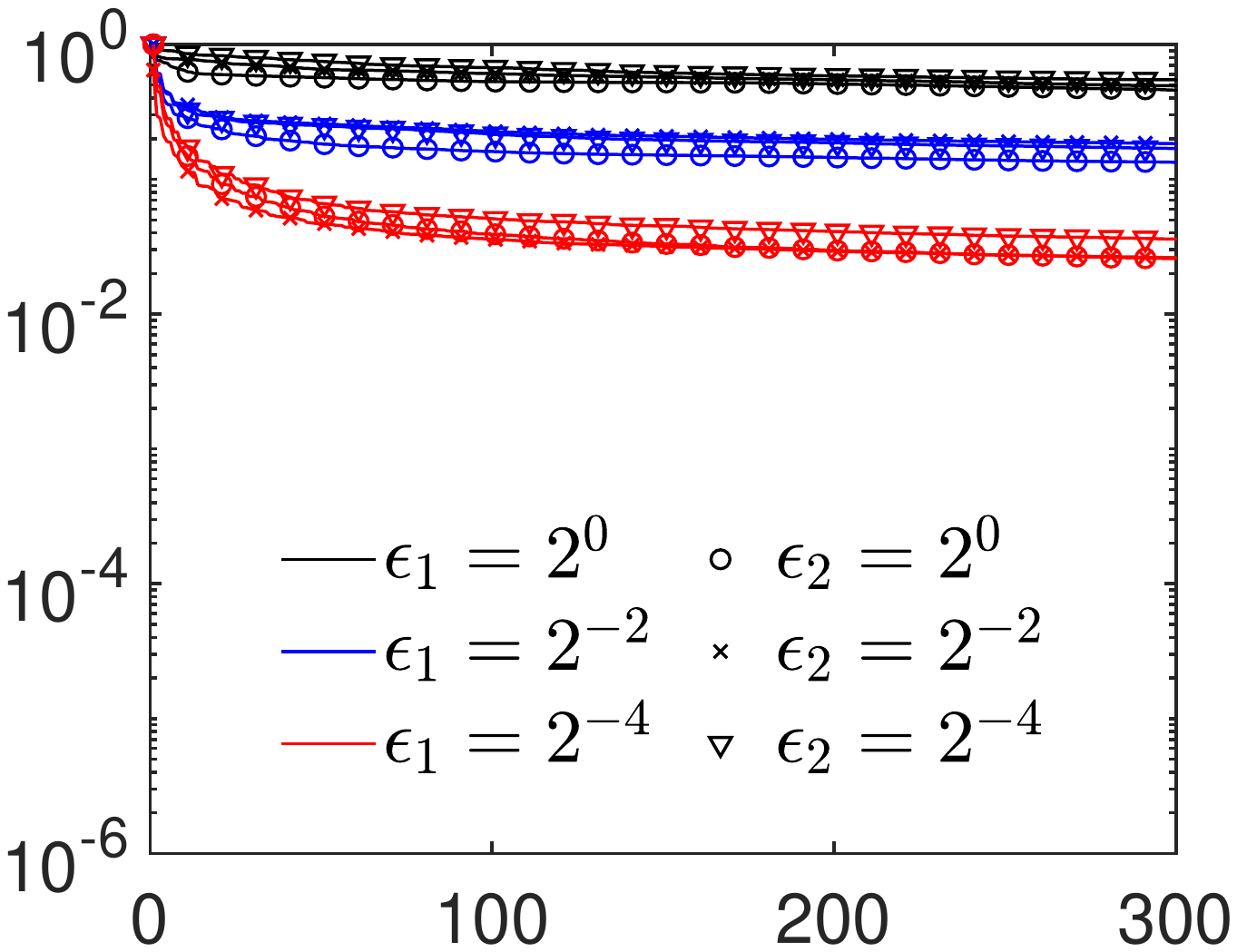}
  \includegraphics[width=0.3\textwidth]{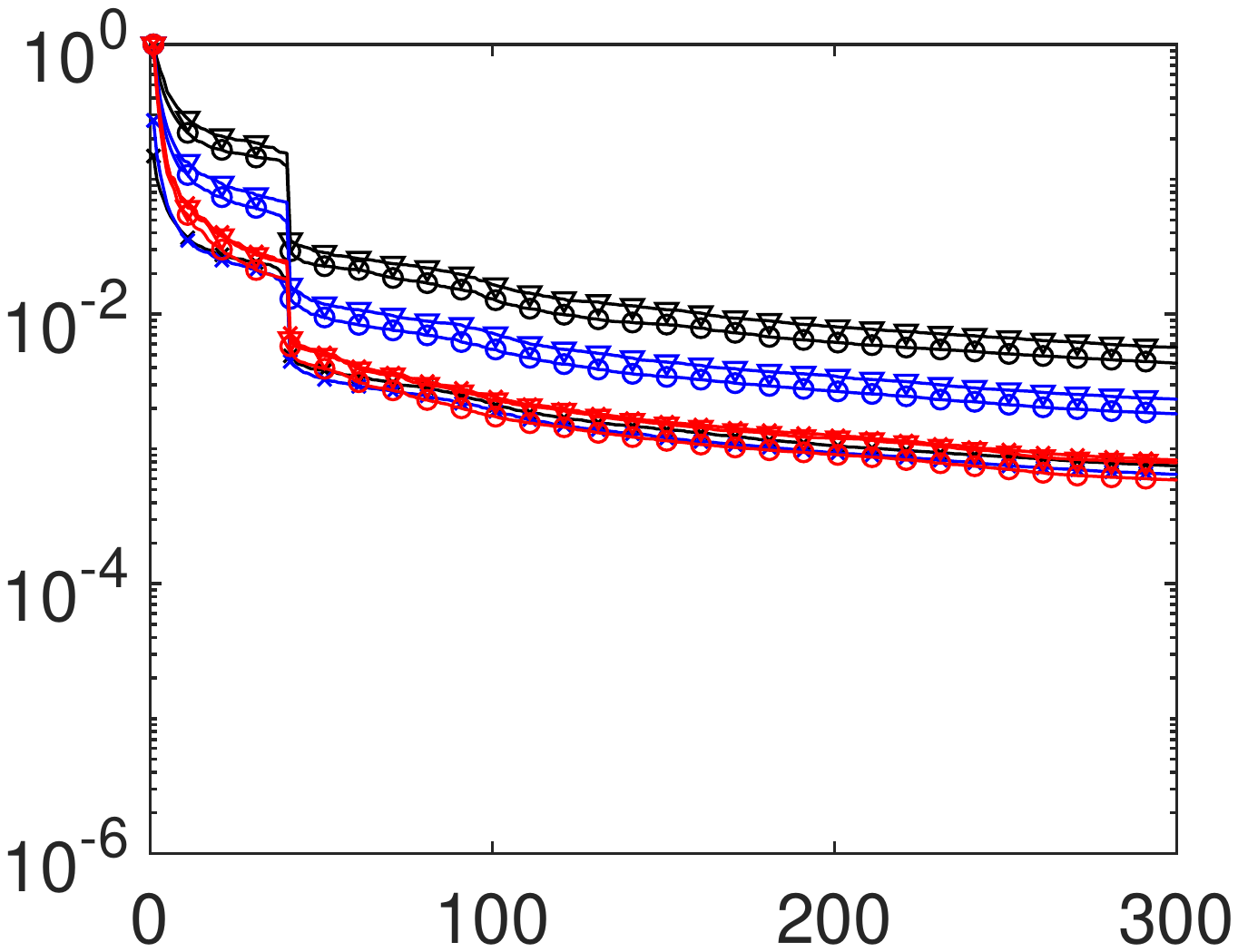}
  \includegraphics[width=0.3\textwidth]{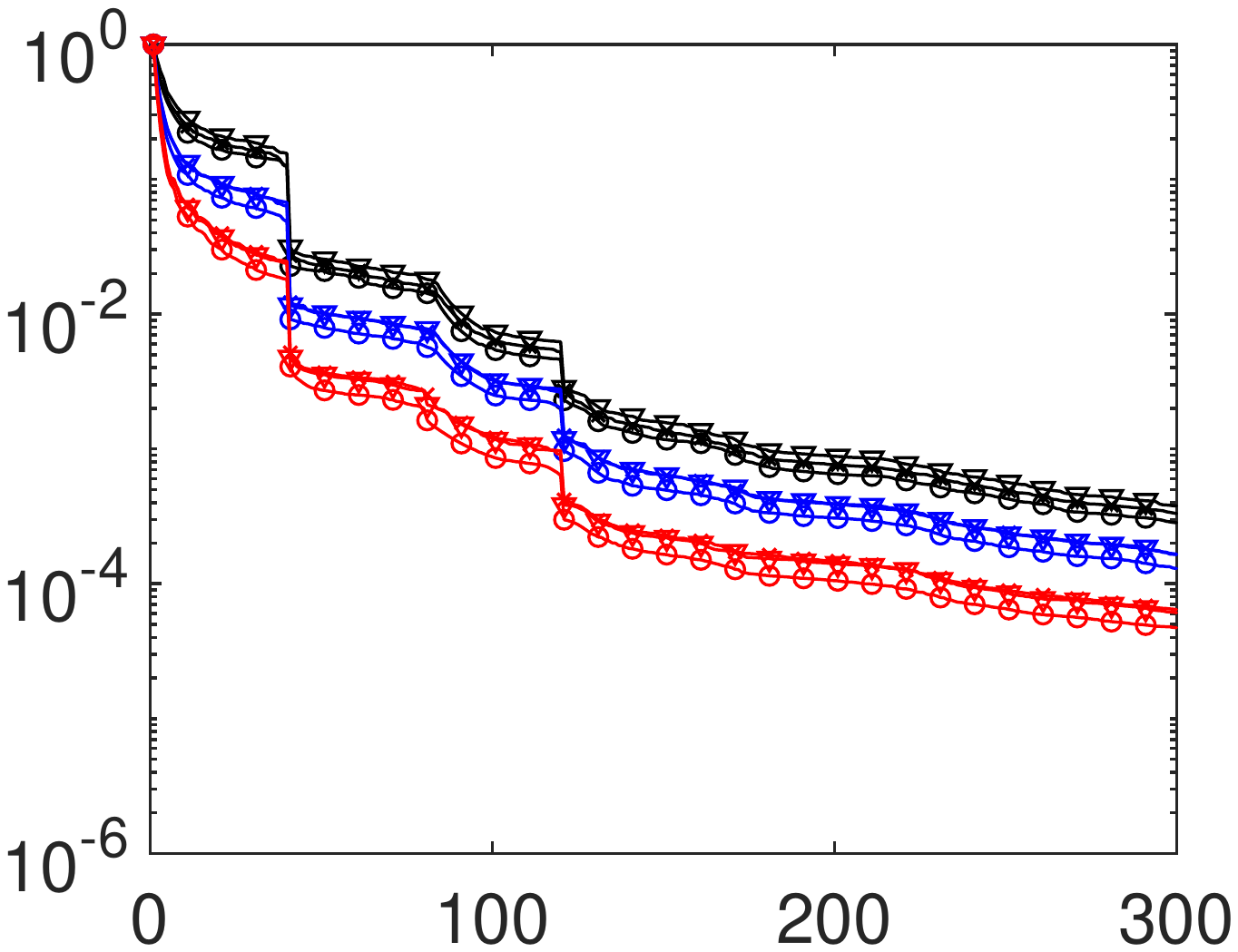}
  \caption{Singular values of the discrete Green's matrix $\mathsf{G}$ as an operator from $H^{p,0}$ to $L^2$ for $p=0$ (left), $p=1$ (middle), $p=2$ (right). The Green's matrix is computed from the RTE~\eqref{eqn:RTE} with medium $\sigma^\eps$ defined in~\eqref{eqn:scat_rte} and~\eqref{eqn:absp_rte}
  }
  \label{fig:RTE_svd}
\end{figure}

\begin{figure}[htbp]
  \centering
  \includegraphics[width=0.2\textwidth, height = 0.1\textheight]{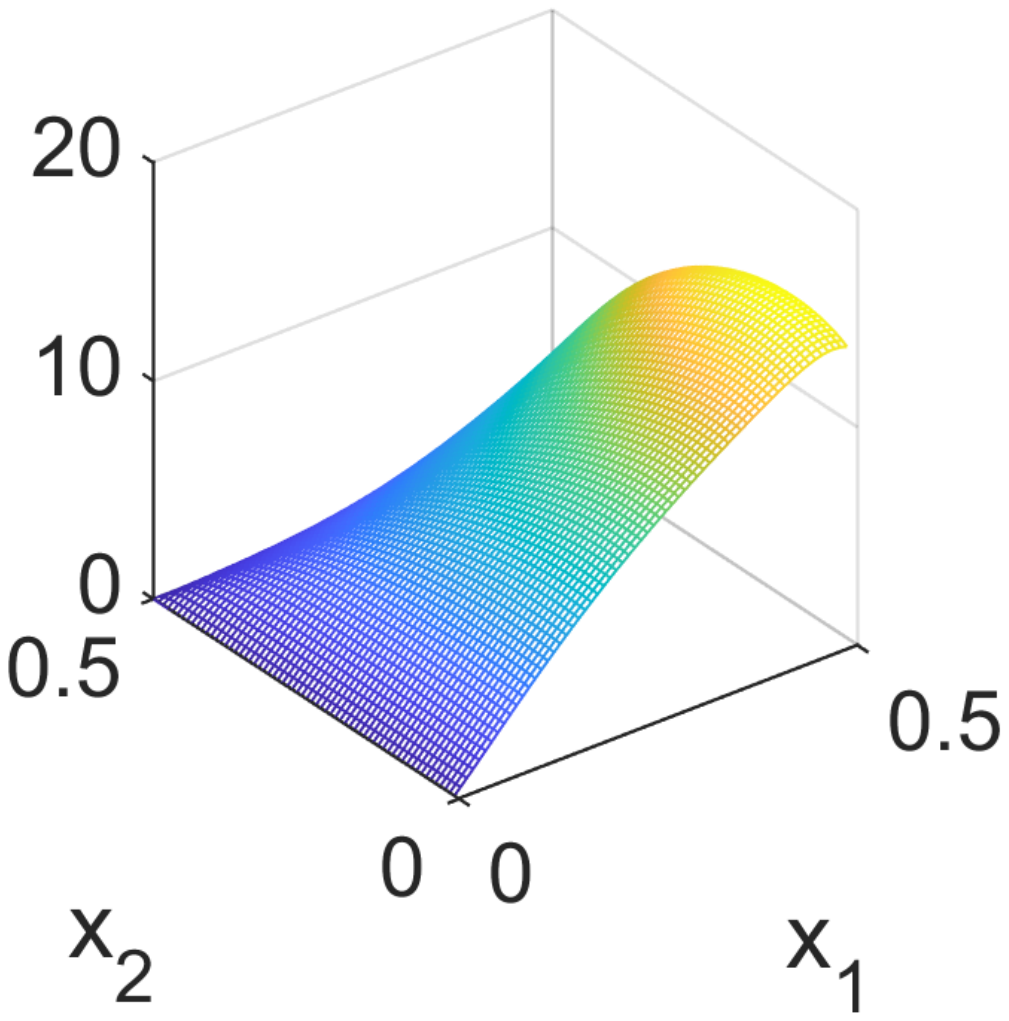}
  \includegraphics[width=0.2\textwidth, height = 0.1\textheight]{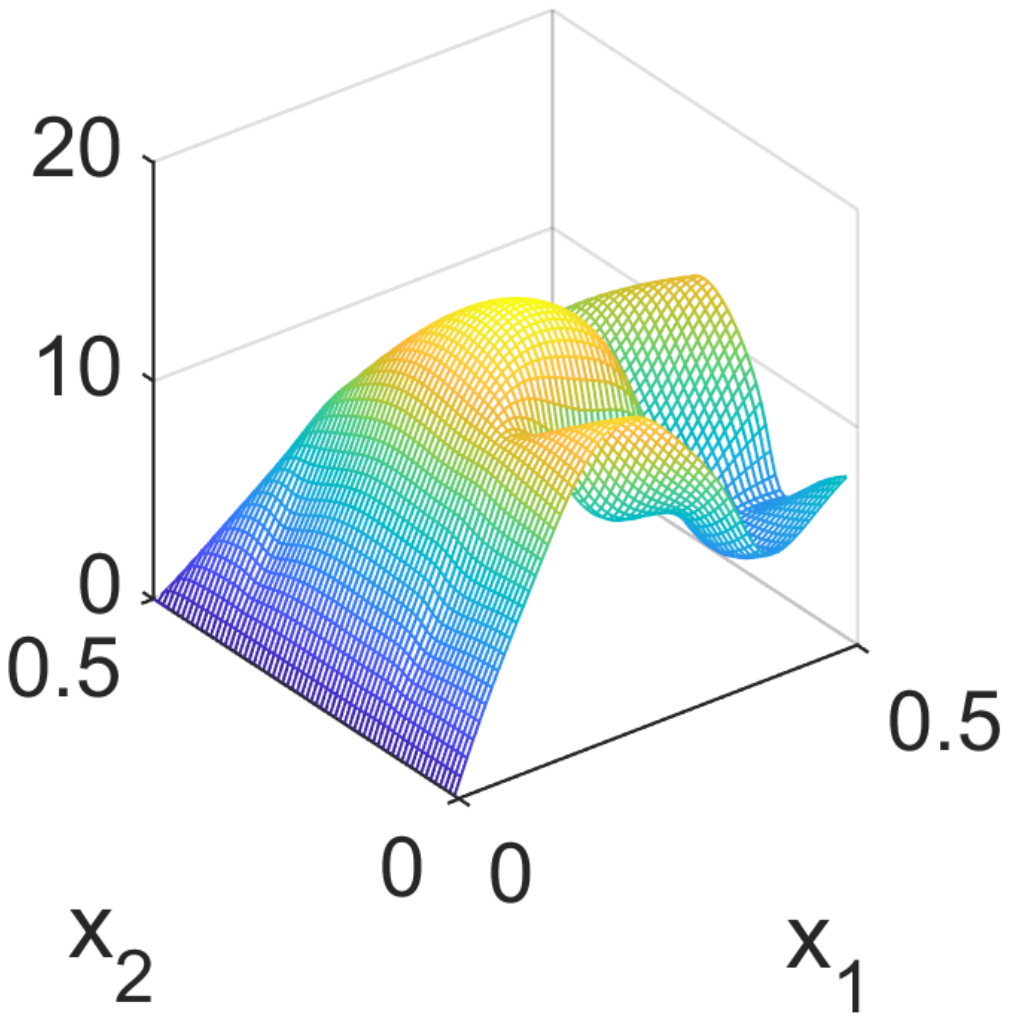}
  \includegraphics[width=0.2\textwidth, height = 0.1\textheight]{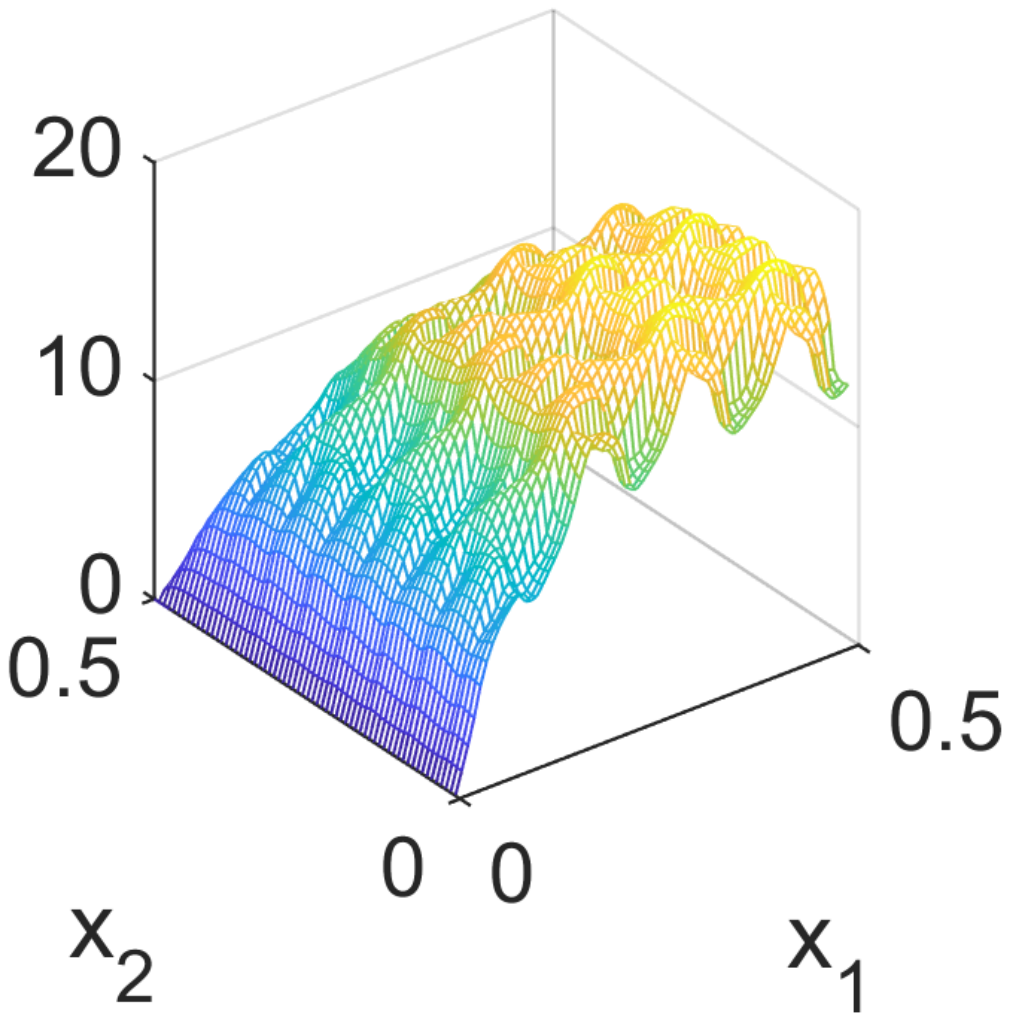}
  \\
  \includegraphics[width=0.2\textwidth, height = 0.1\textheight]{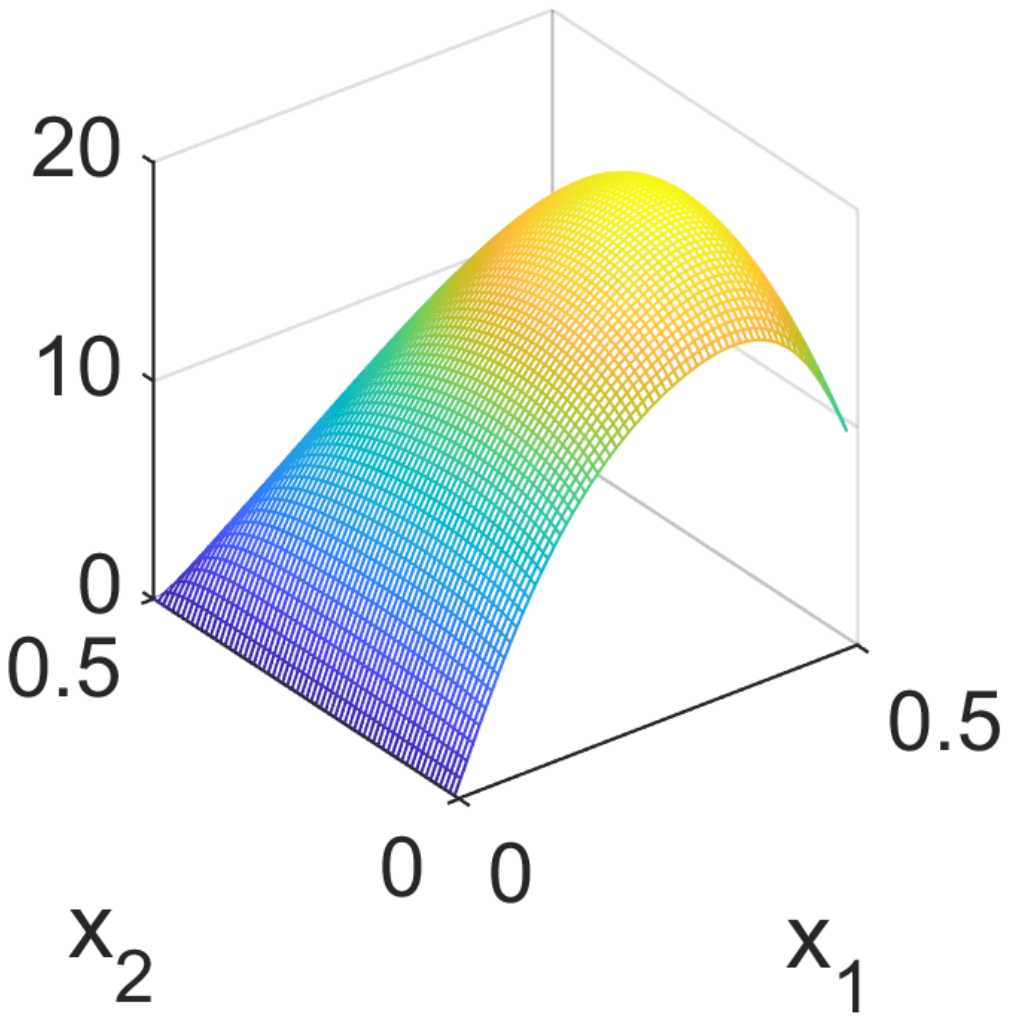}
  \includegraphics[width=0.2\textwidth, height = 0.1\textheight]{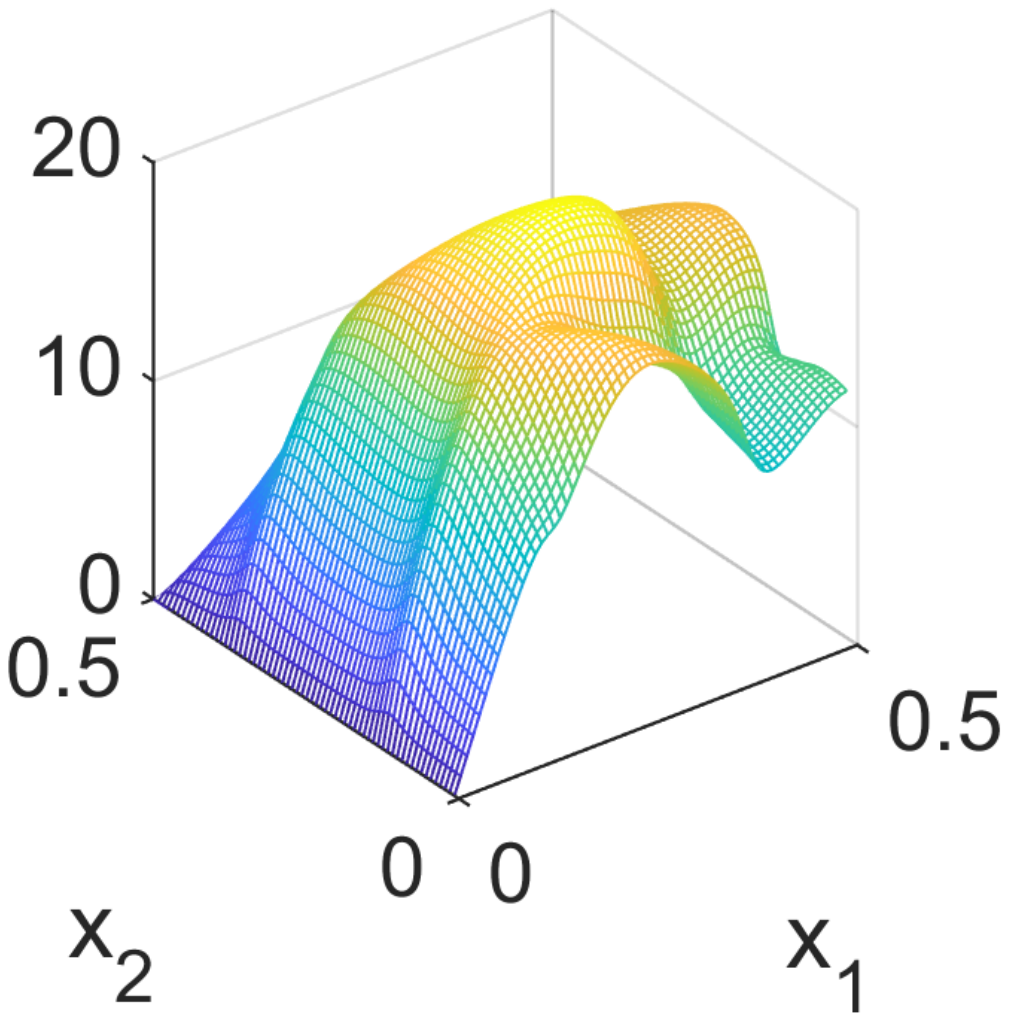}
  \includegraphics[width=0.2\textwidth, height = 0.1\textheight]{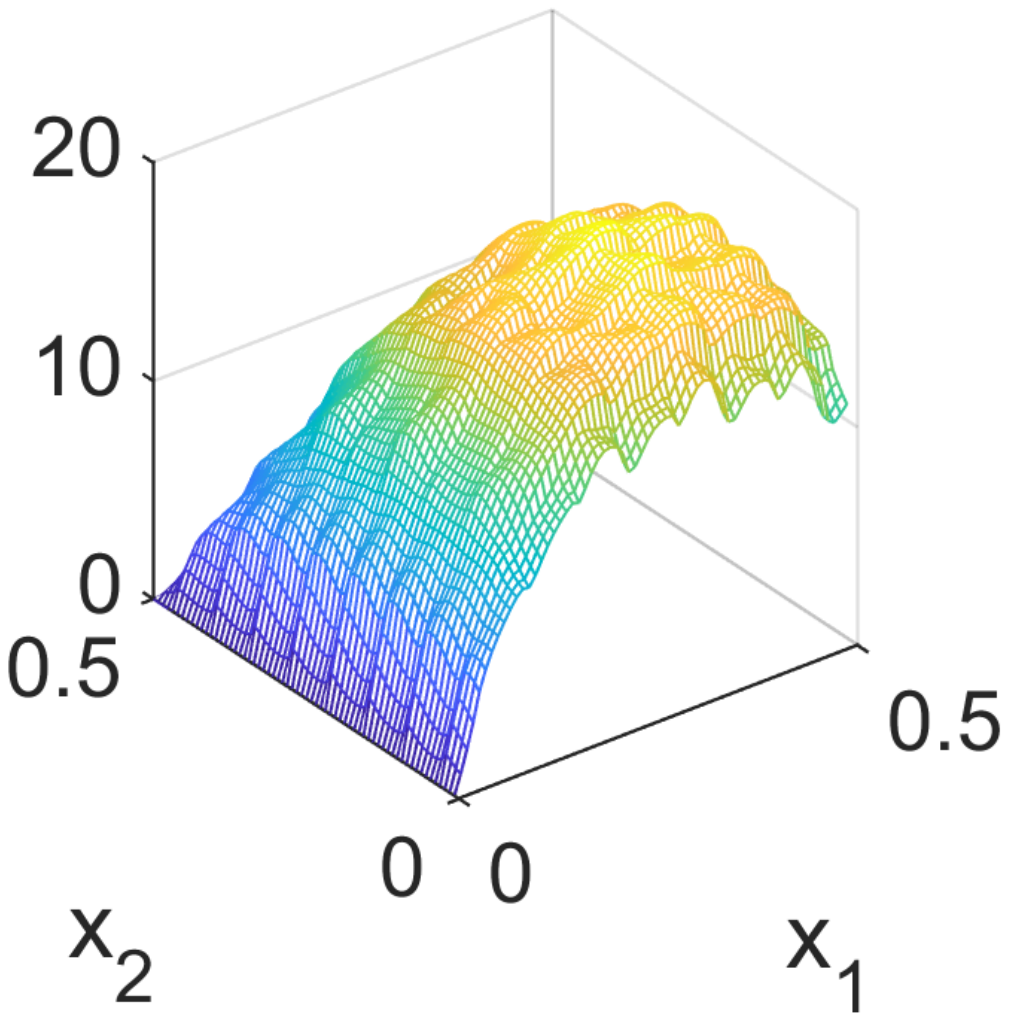}
  \\
  \includegraphics[width=0.2\textwidth, height = 0.1\textheight]{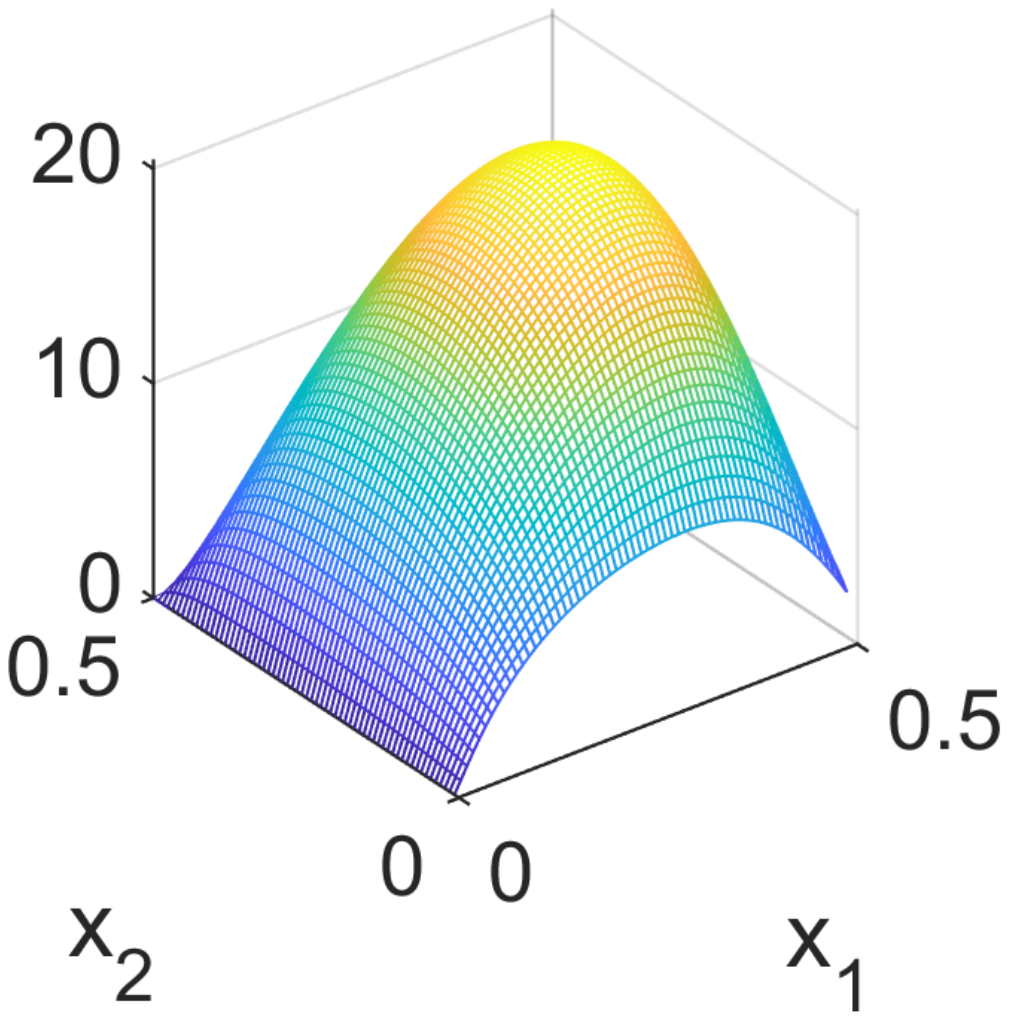}
  \includegraphics[width=0.2\textwidth, height = 0.1\textheight]{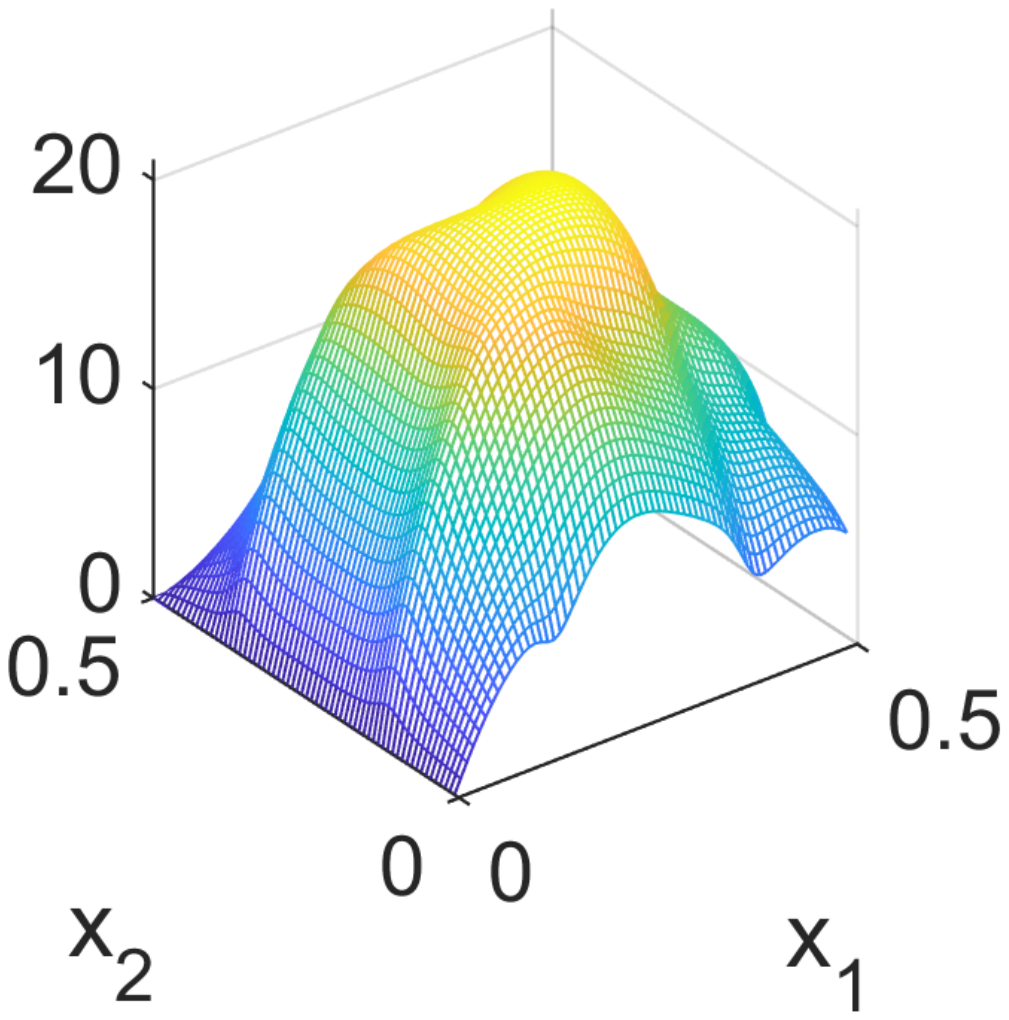}
  \includegraphics[width=0.2\textwidth, height = 0.1\textheight]{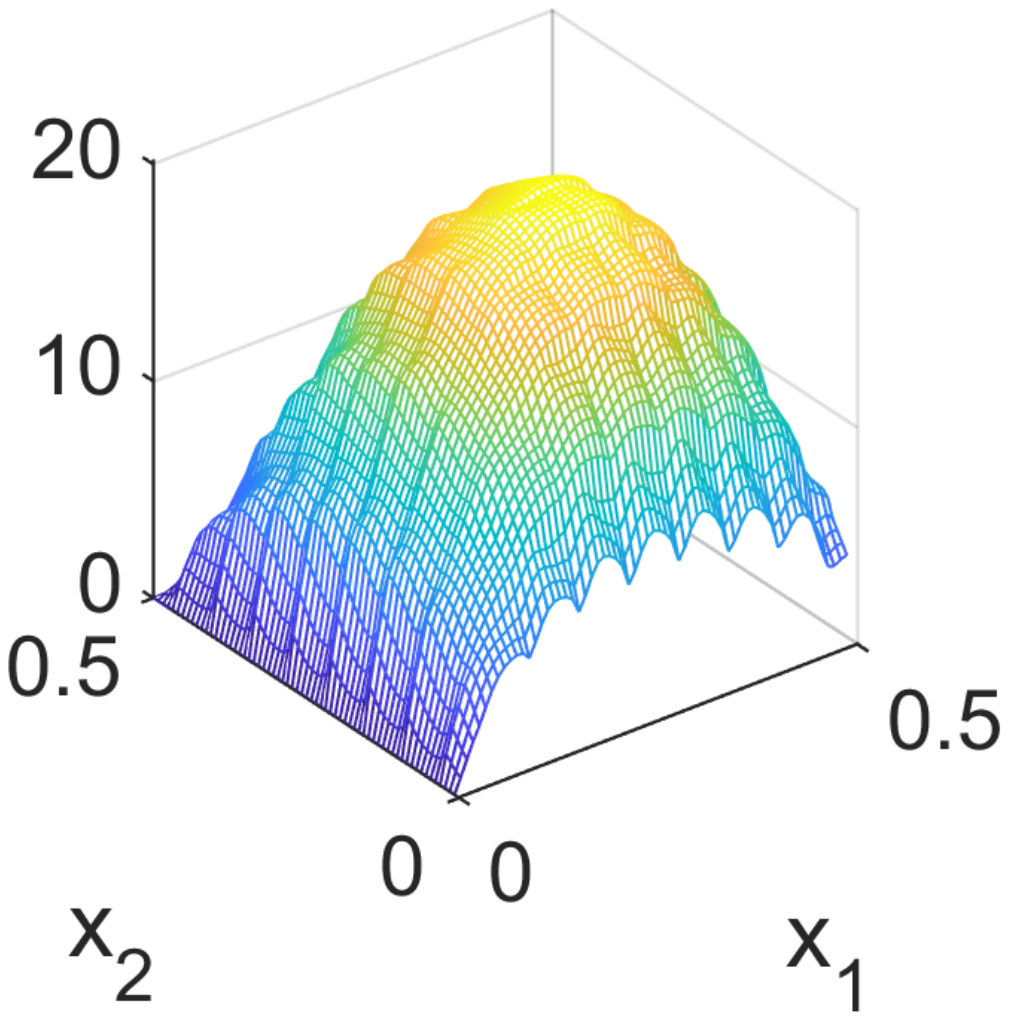}
  \caption{The first left singular vectors $\mathsf{\hat{u}}$ at $\theta=1.75\pi$ of the Green's matrix $\mathsf{G}$ computed from the rSVD for RTE~\eqref{eqn:RTE} with medium $\sigma$ defined in~\eqref{eqn:scat_rte} and~\eqref{eqn:absp_rte} for different $\eps$'s. The columns from left to right show, respectively, $\eps_2=2^{0}$, $\eps_2=2^{-2}$ and $\eps_2 = 2^{-4}$. The rows from top to bottom show, respectively, $\eps_1=2^{0}$, $\eps_1=2^{-2}$ and $\eps_1=2^{-4}$.
  }
  \label{fig:RTE_left_svectors_1}
\end{figure}

Next, we utilize these optimal basis functions to solve the
RTE~\eqref{eqn:linear_elliptic} and show the online test
performance. We choose
\begin{equation} \label{def.f}
f(x_1,x_2,\theta)=\exp\left( -\frac{(x_1-L/2)^2 +
  (x_2-L/2)^2}{(L/4)^2} - \frac{(\cos\theta-1)^2 +
  \sin^2\theta}{0.2^2} \right)
\end{equation}
and plot in~\cref{fig:RTE_ref_soln_fgauss} the reference solutions of
different sources and media. In \cref{fig:RTE_err}, we show the
relative $L^2$ error as a function of the number of bases for
different media and smoothness index $p$ of the source space. As in
the case above, higher regularity brings better convergence in the
error, and $\eps_2$ has limited influence on the error decay while
smaller $\eps_1$ leads to a faster decay rate.

\begin{figure}[htbp]
  \centering
  \includegraphics[width=0.2\textwidth, height = 0.1\textheight]{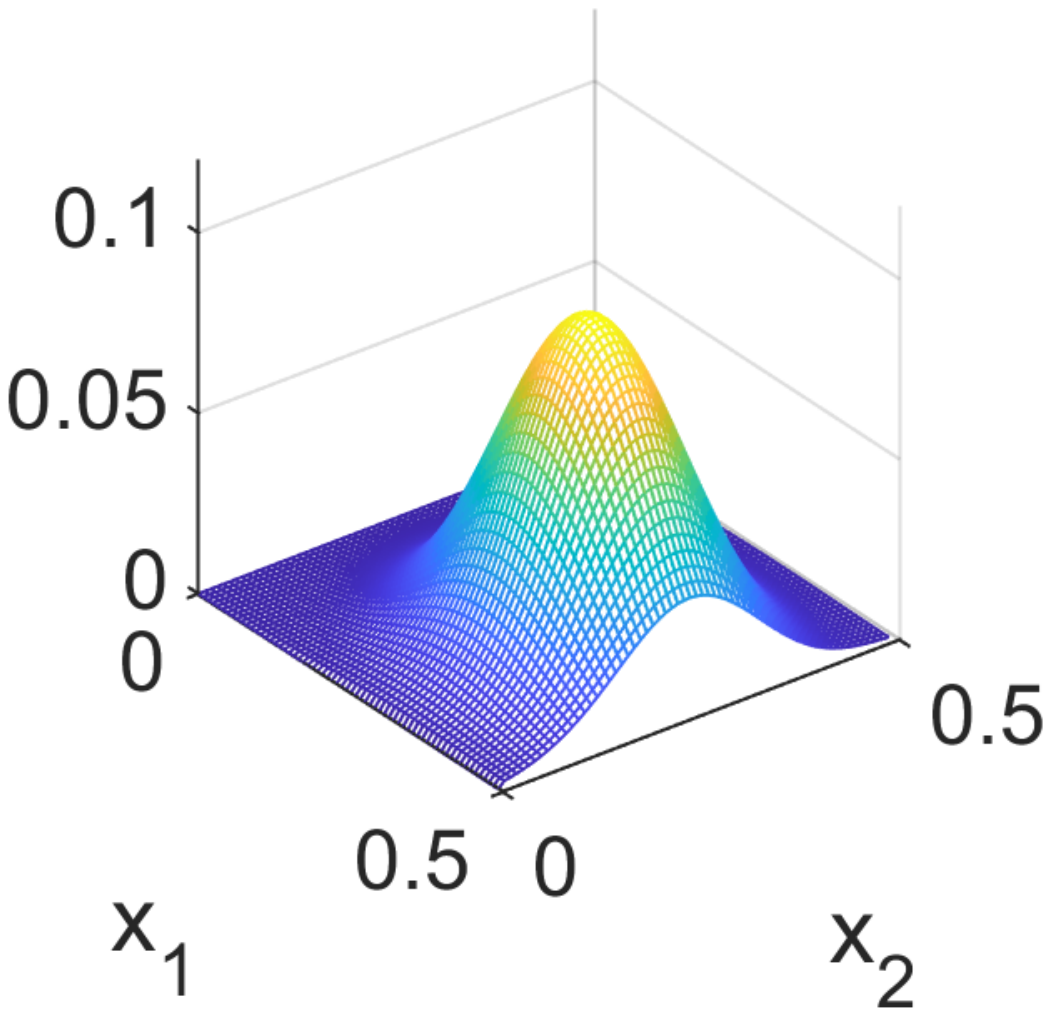}
  \includegraphics[width=0.2\textwidth, height = 0.1\textheight]{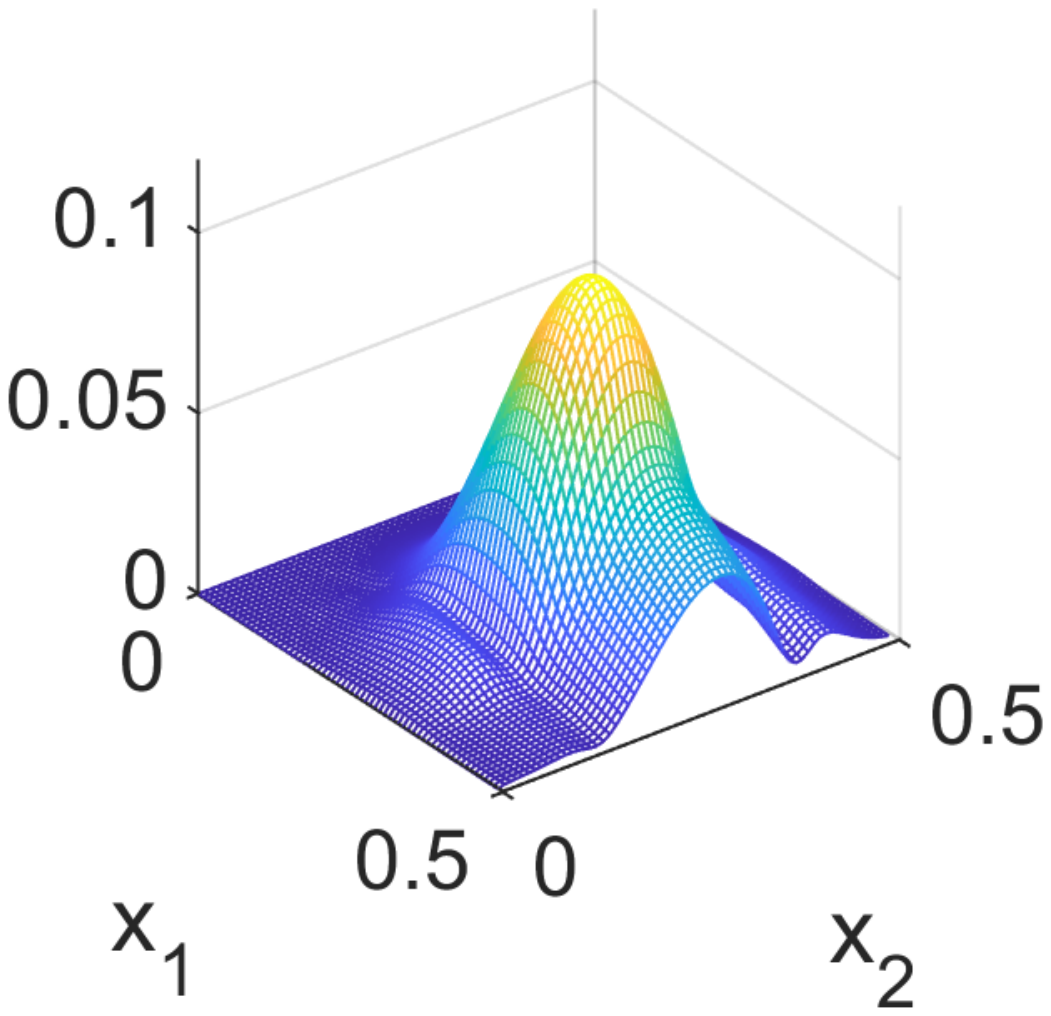}
  \includegraphics[width=0.2\textwidth, height = 0.1\textheight]{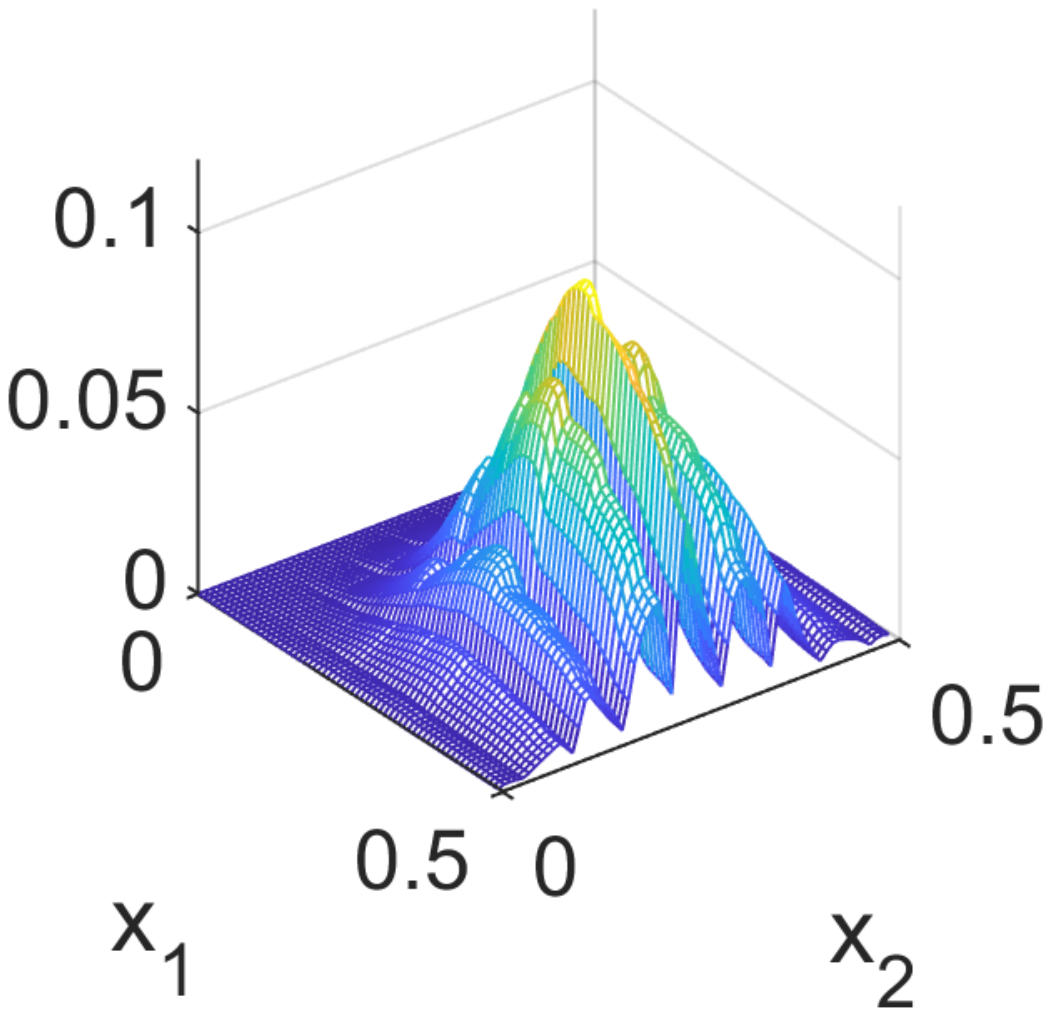}

  \includegraphics[width=0.2\textwidth, height = 0.1\textheight]{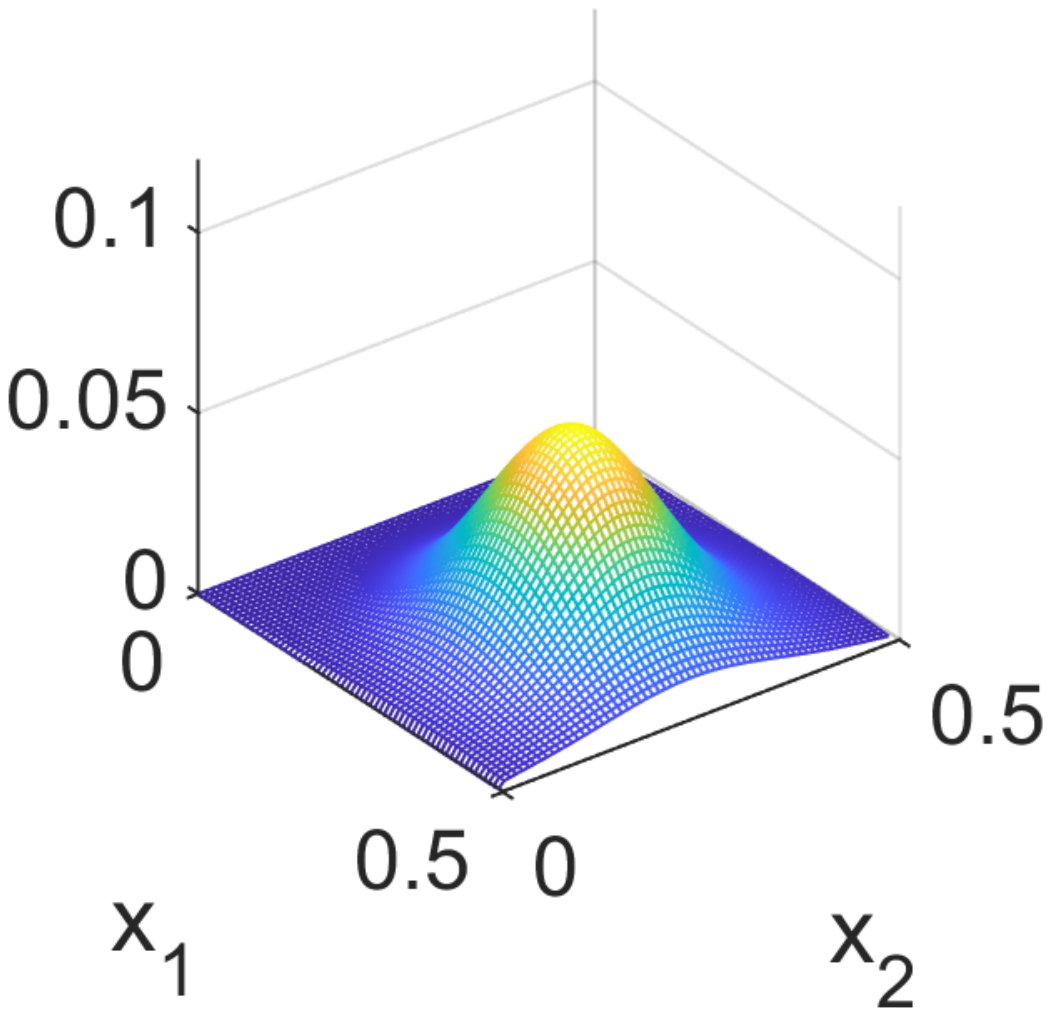}
  \includegraphics[width=0.2\textwidth, height = 0.1\textheight]{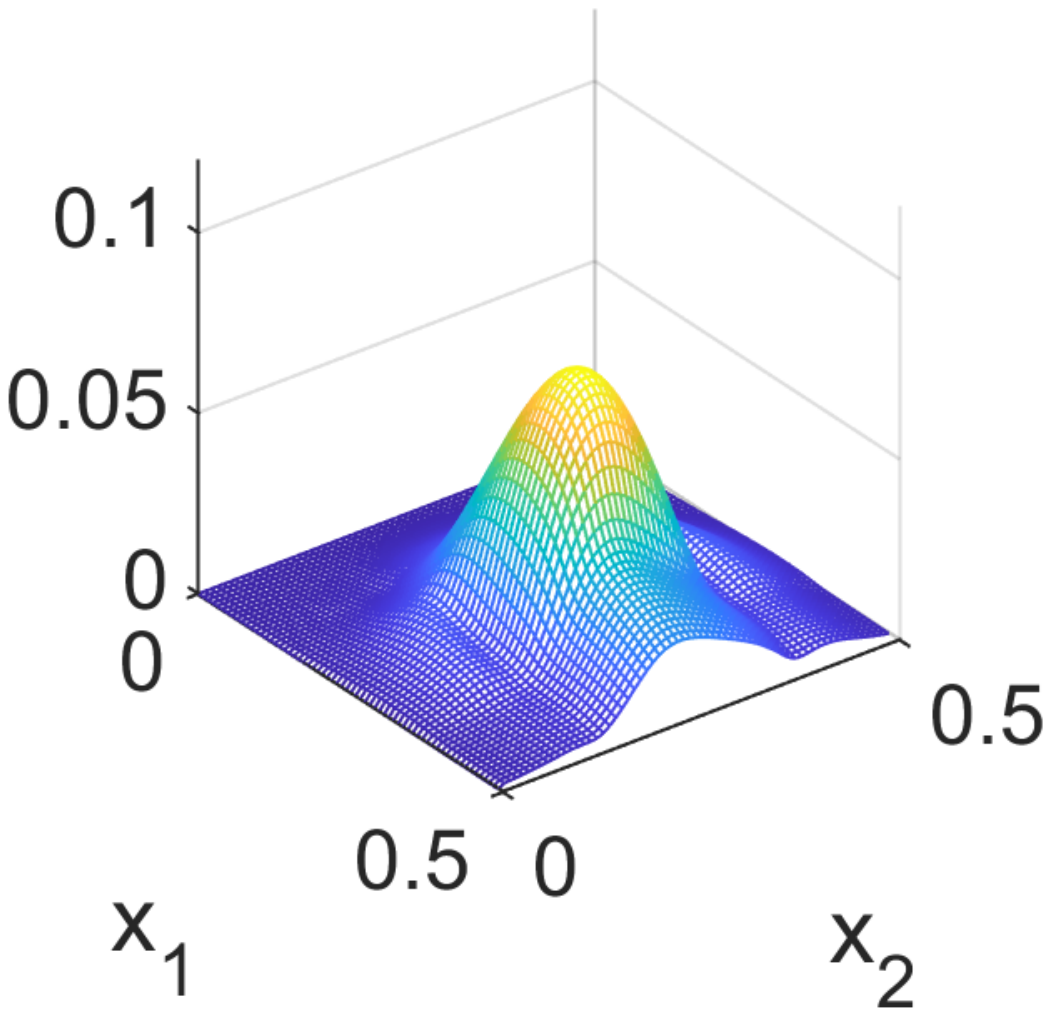}
  \includegraphics[width=0.2\textwidth, height = 0.1\textheight]{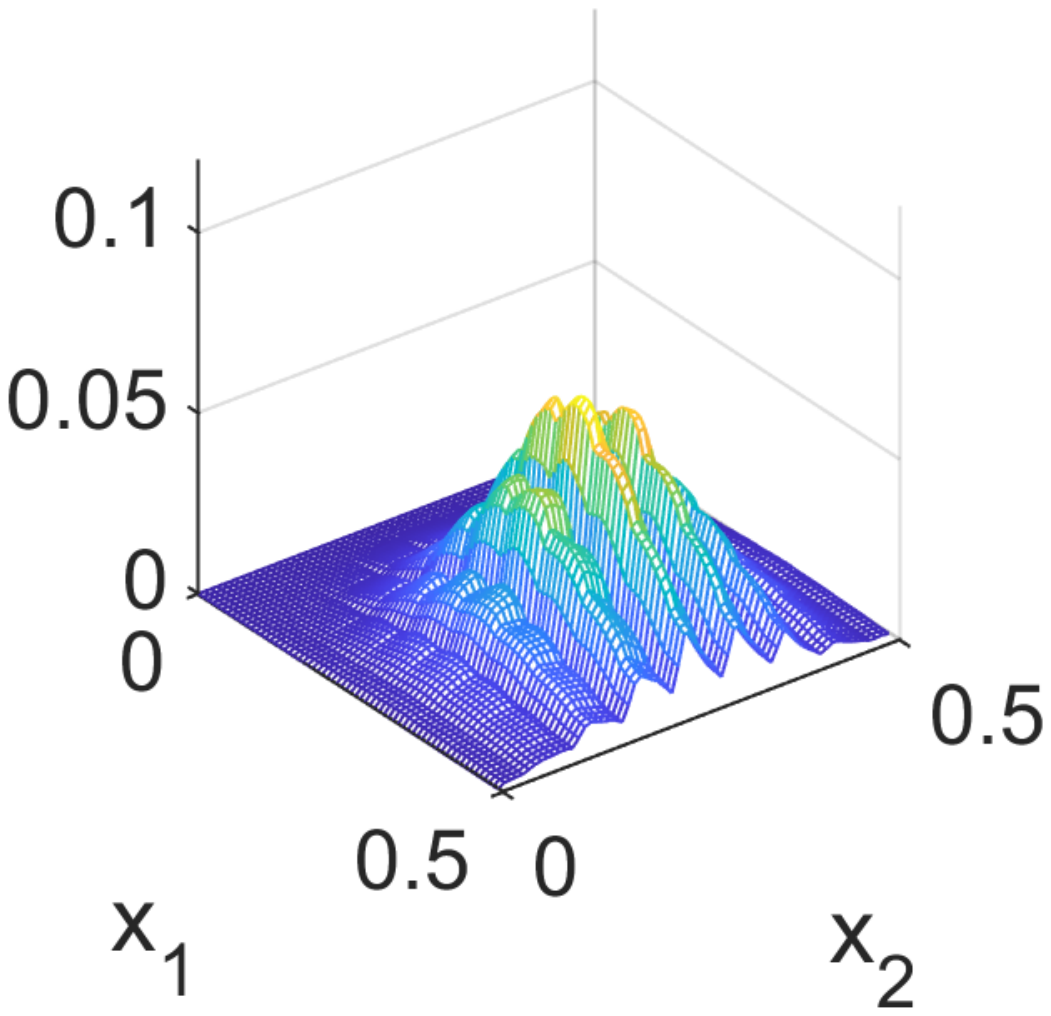}

  \includegraphics[width=0.2\textwidth, height = 0.1\textheight]{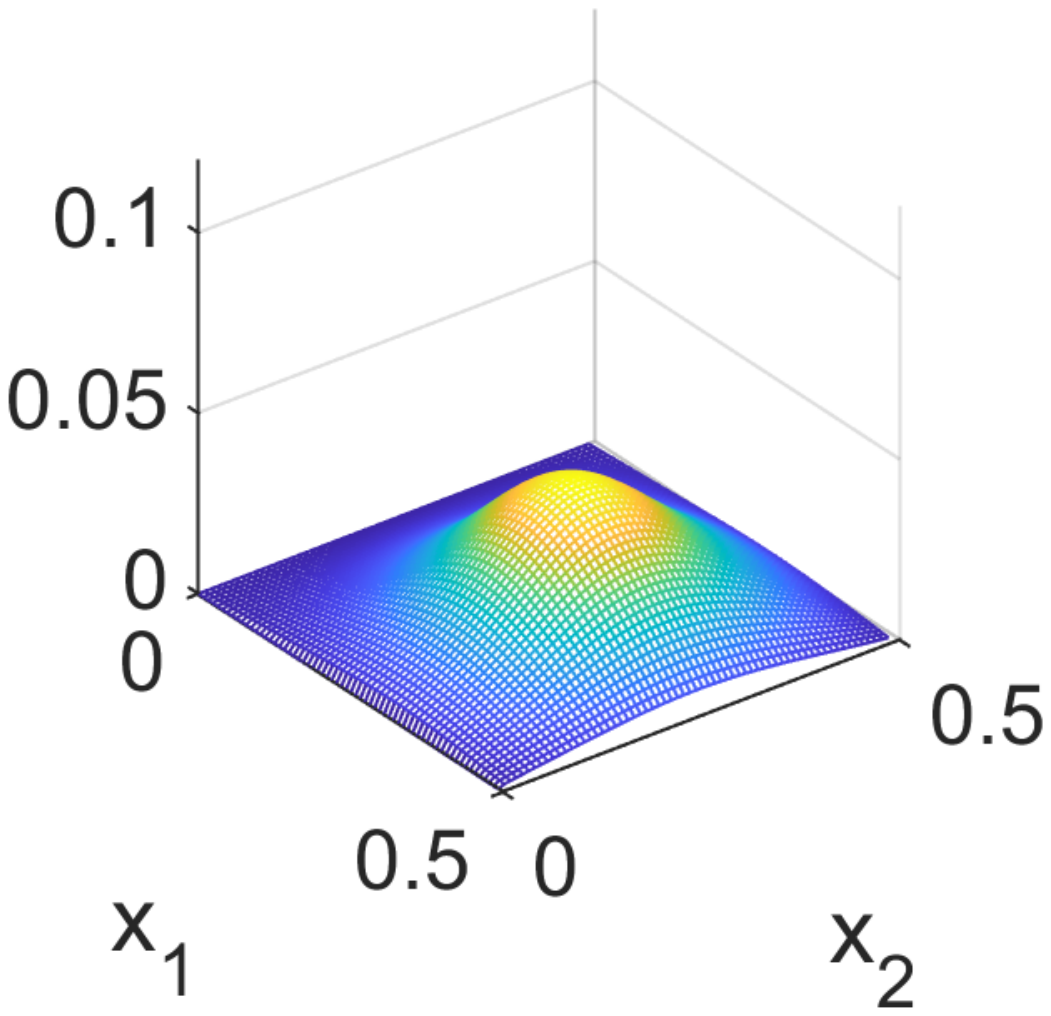}
  \includegraphics[width=0.2\textwidth, height = 0.1\textheight]{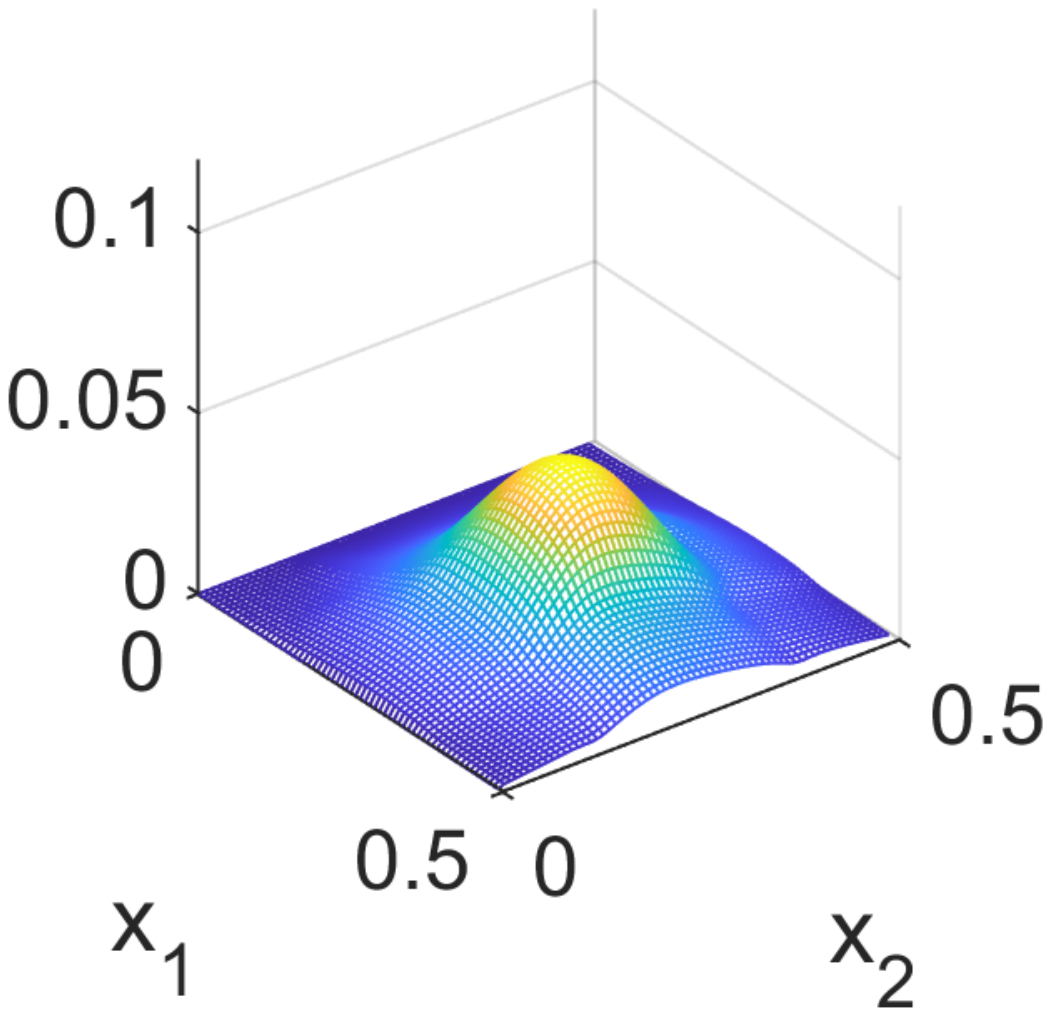}
  \includegraphics[width=0.2\textwidth, height = 0.1\textheight]{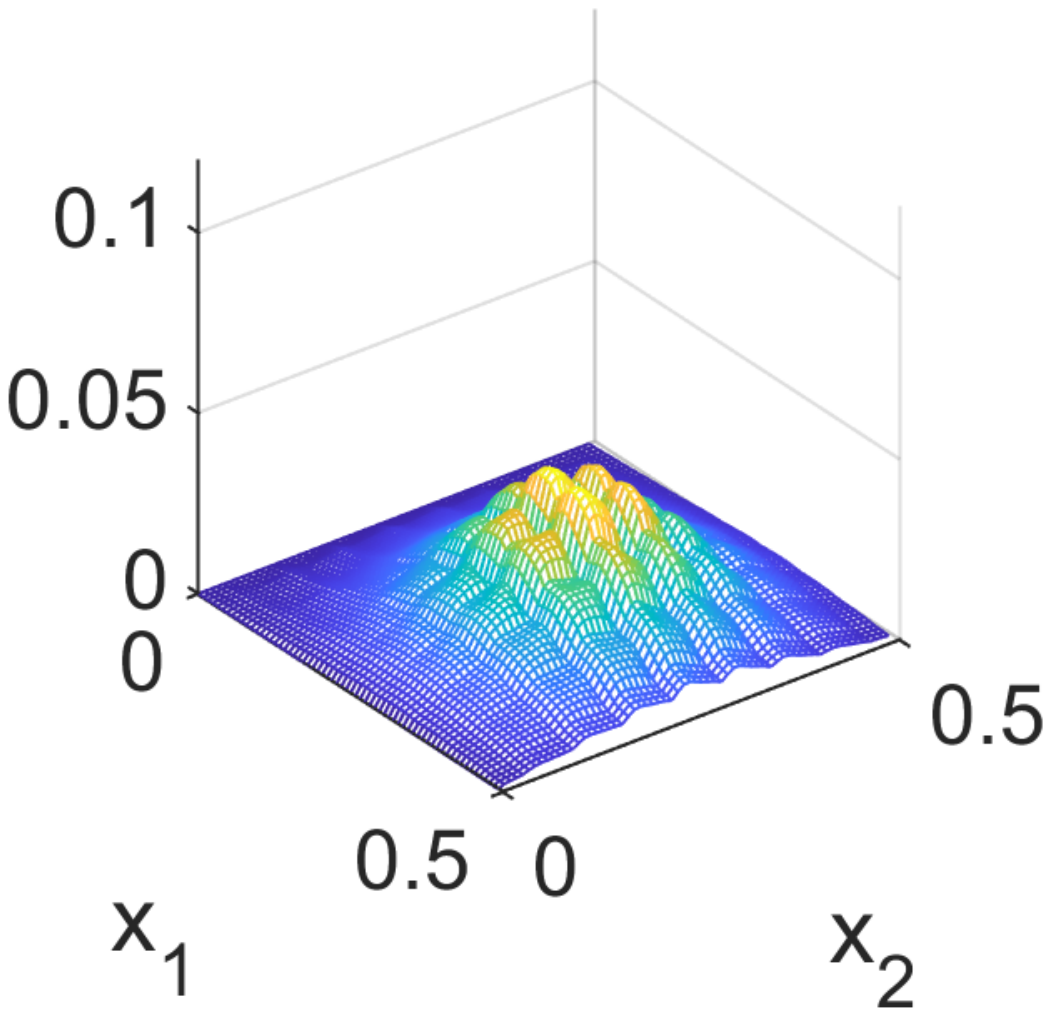}

  \caption{Reference solution at $\theta = 0$ for the RTE~\eqref{eqn:RTE} for different choices of $(\eps_1,\eps_2)$. The columns from left to right show, respectively, $\eps_2=2^{0}$, $2^{-2}$ and $2^{-4}$. The rows from top to bottom show, respectively, $\eps_1=2^{0}$, $2^{-2}$ and $2^{-4}$. The source $f(x_1,x_2,\theta)$ is defined by \eqref{def.f}.}
  \label{fig:RTE_ref_soln_fgauss}
\end{figure}

\begin{figure}[htbp]
  \centering
  \includegraphics[width=0.3\textwidth]{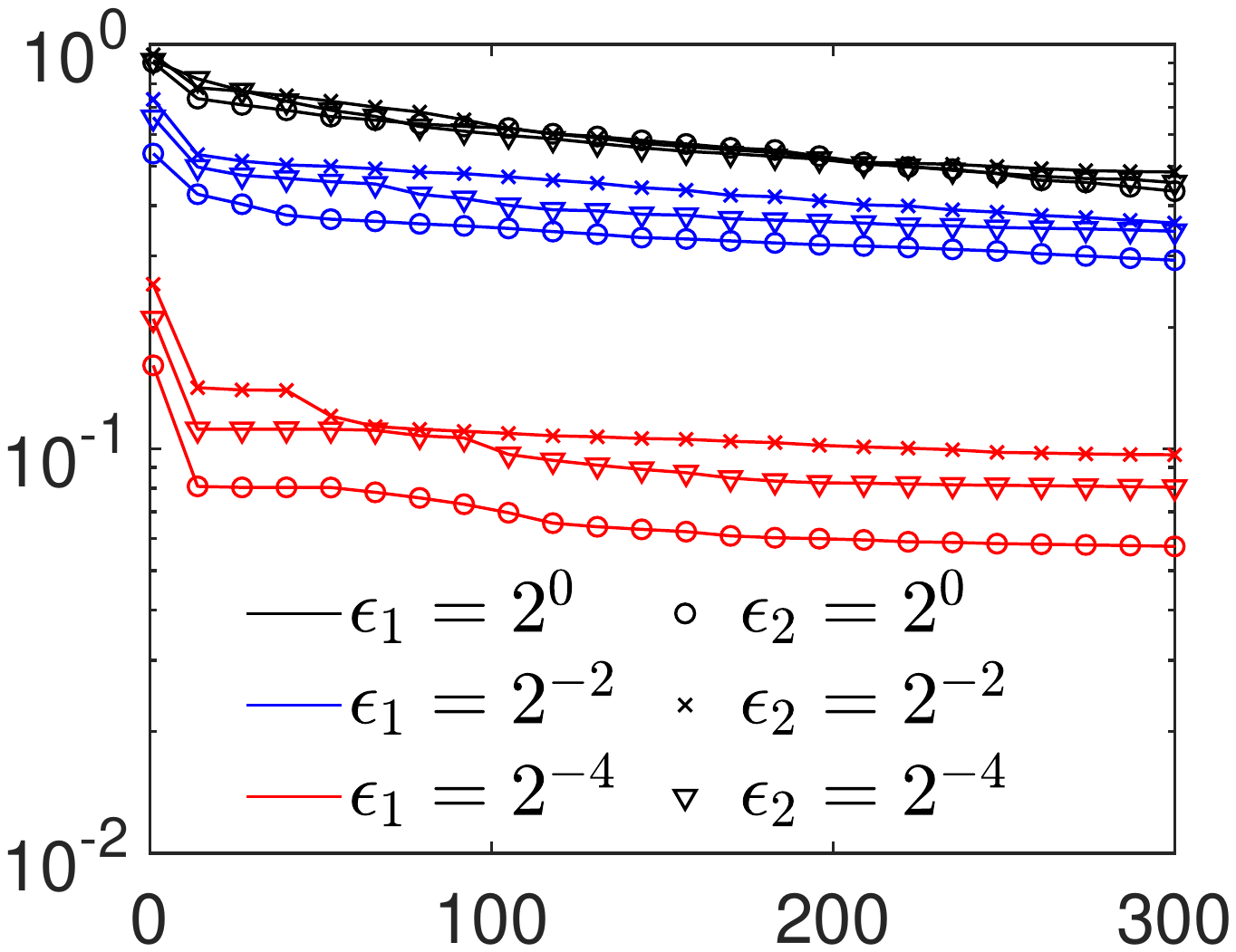}
  \includegraphics[width=0.3\textwidth]{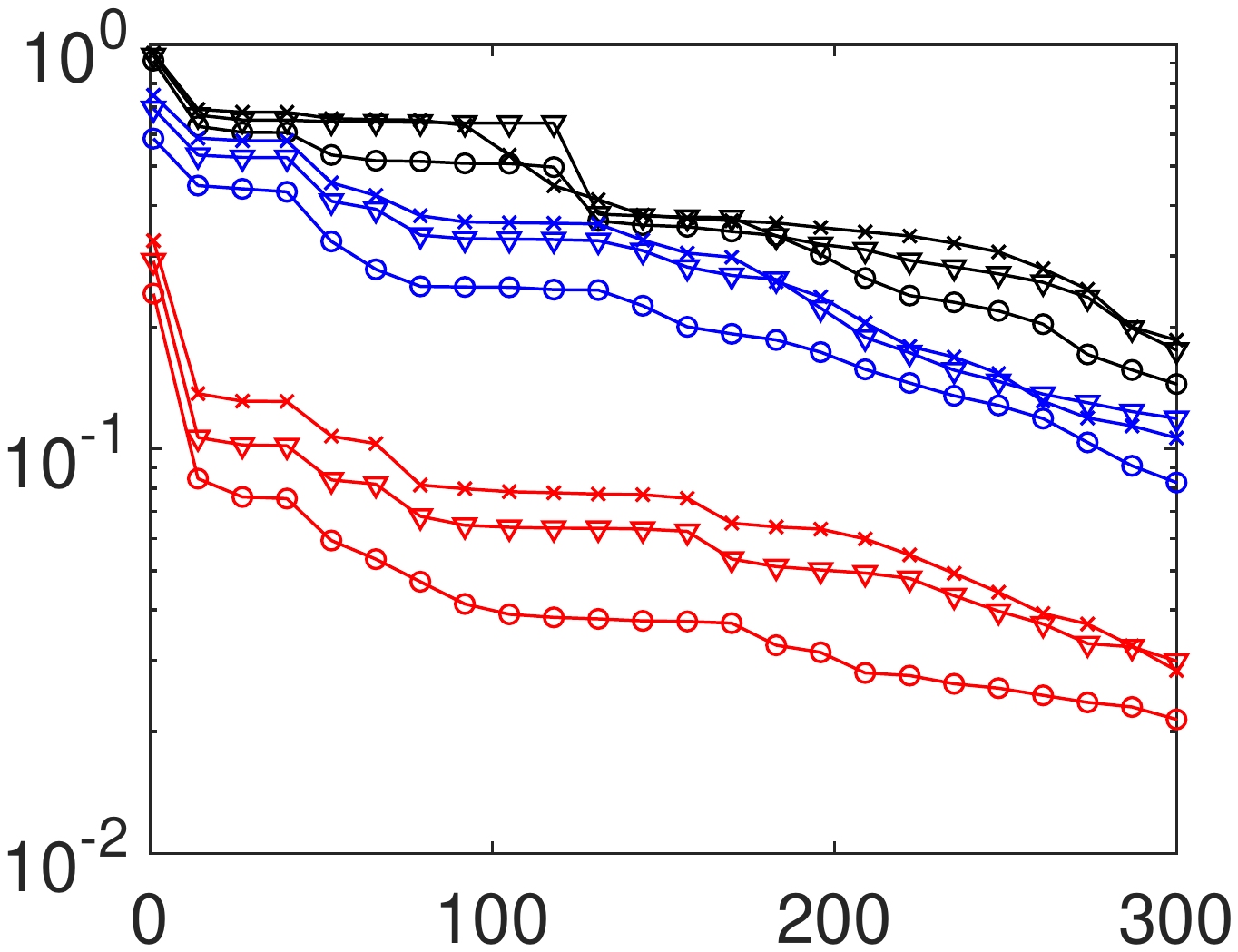}
  \includegraphics[width=0.3\textwidth]{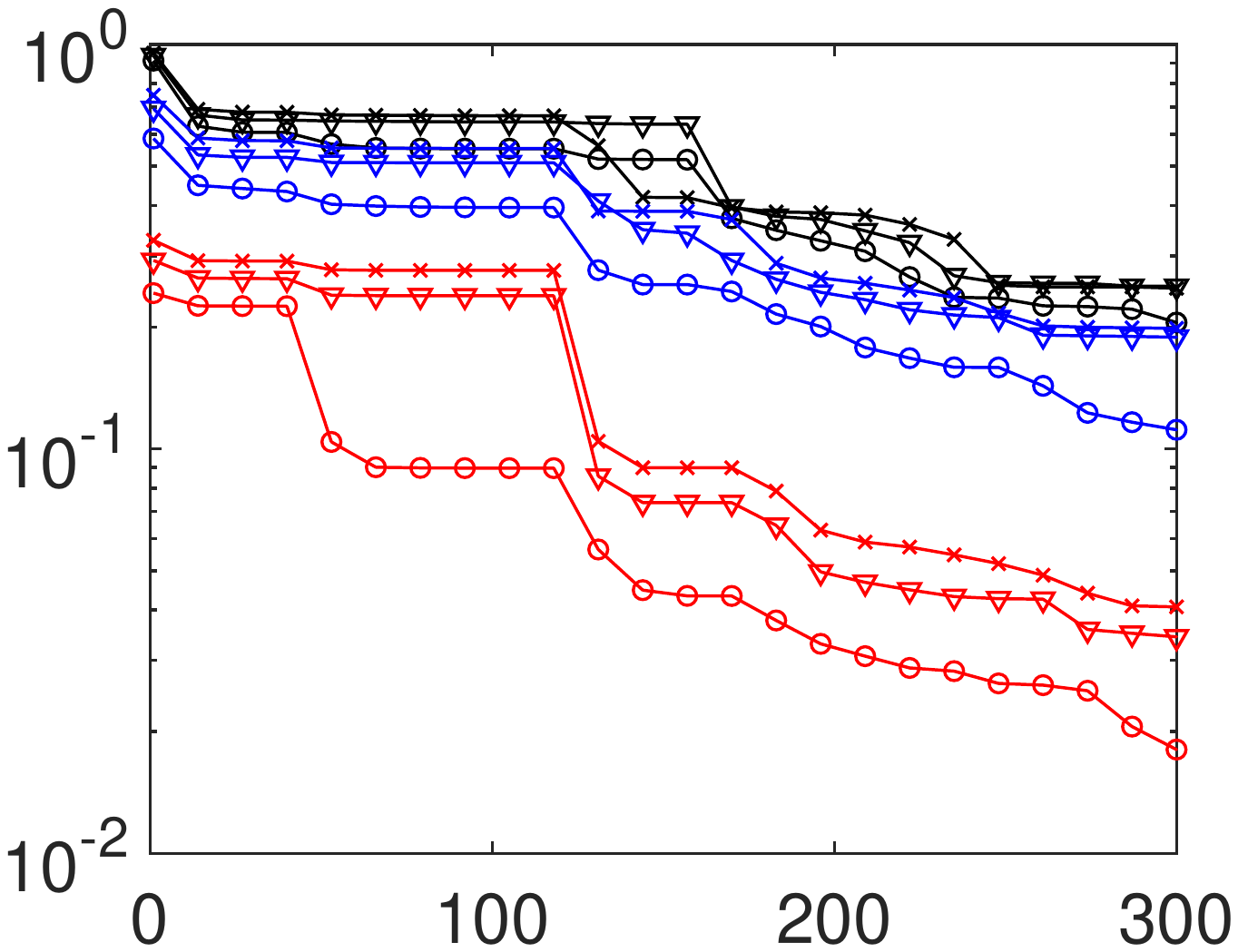}
  \caption{Relative $L^2$ error for the RTE~\eqref{eqn:RTE} with different media and smoothness indices $p=0$ (left), $p=1$ (middle) and $p=2$ (right).
    The source $f(x_1,x_2,\theta)$ is defined by \eqref{def.f}.}
  \label{fig:RTE_err}
\end{figure}

\subsection{Nonlinear systems}\label{sec:nonlinear_num}

In this section, we study~\eqref{eqn:MP1r} numerically and invert the
nonlinear operator $\mathcal{L} + \mathcal{N}$ for any source term
$f(x)$. Again, we present two examples: a semilinear elliptic equation
and a semlinear radiative transport equation. Both have strong
inhomogeneities in the media and multiscale behavior.

\subsubsection{Semilinear elliptic equation}\label{sec:semlinear_numerics}

In the first example, we consider the semilinear elliptic equation
\begin{equation}\label{eqn:semilinear_elliptic}
\begin{cases}
-\nabla \cdot (\kappa(x_1,x_2) \nabla u(x_1,x_2)) + u^3(x_1,x_2)= f(x_1,x_2),\quad& (x_1,x_2)\in\Omega\,, \\
u(x_1,x_2) = 0,\quad& (x_1,x_2)\in\partial\Omega\,,
\end{cases}
\end{equation}
with the same medium in~\eqref{eqn:medium}. As in~\Cref{sec:linear_elliptic}, to generate the reference solution, we use the finite difference method to discretize~\eqref{eqn:semilinear_elliptic} on uniform grid with mesh size $h = 2^{-6} = \tfrac{1}{64}$.

The semilinear elliptic equation has the form of~\eqref{eqn:MP1r} when the linear operator $\mathcal{L}$ and the nonlinear term $\mathcal{N}$  are defined by
\begin{equation}
    \mathcal{L} u = -\nabla \cdot (\kappa(x_1,x_2) \nabla u)\,, \quad \mathcal{N}(u) = u^3 \,.
\end{equation}
For this choice of $\mathcal{L}$ and $\mathcal{N}$, we can reuse the optimal basis obtained in the linear elliptic example in~\Cref{sec:linear_elliptic}. The fixed-point iteration (~\Cref{ALG2}) is implemented to find the coefficients of the optimal basis.

The optimal basis of the operator $\mathcal{L}$ provides accurate solution representation of the nonlinear PDEs.
We test the representation power of the basis using the source $100\sin(4\pi x_1)\sin(4\pi x_2)$.
In \cref{fig:semilinear_ref_soln} we show reference solutions for different $\eps$ as well as the relative $L^2$ error and energy error.
In all three test problems, smaller $\eps$ does not deteriorate the performance of the optimal basis, and larger $p$ produces smaller error for sources that are less oscillatory.

\begin{figure}[htbp]
  \centering
  \includegraphics[width=0.25\textwidth]{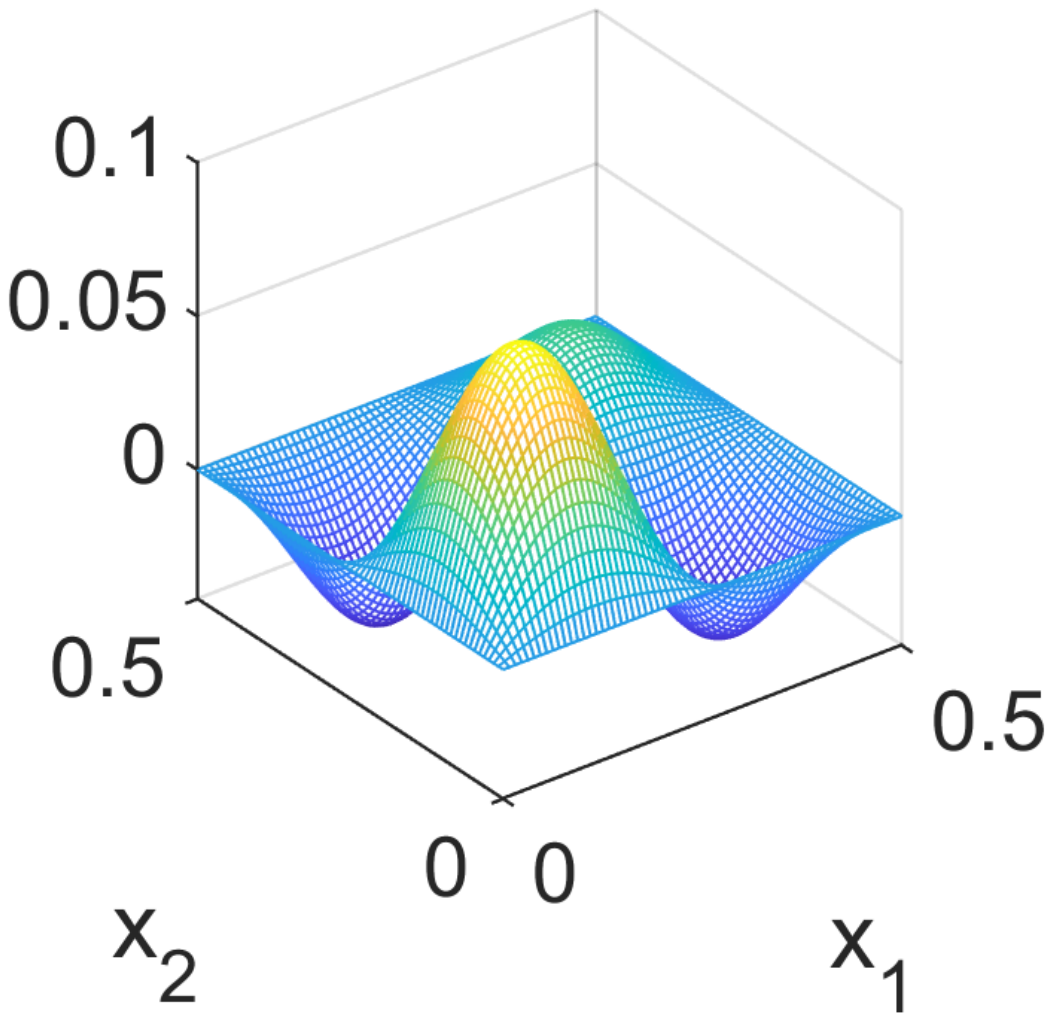}
  \includegraphics[width=0.25\textwidth]{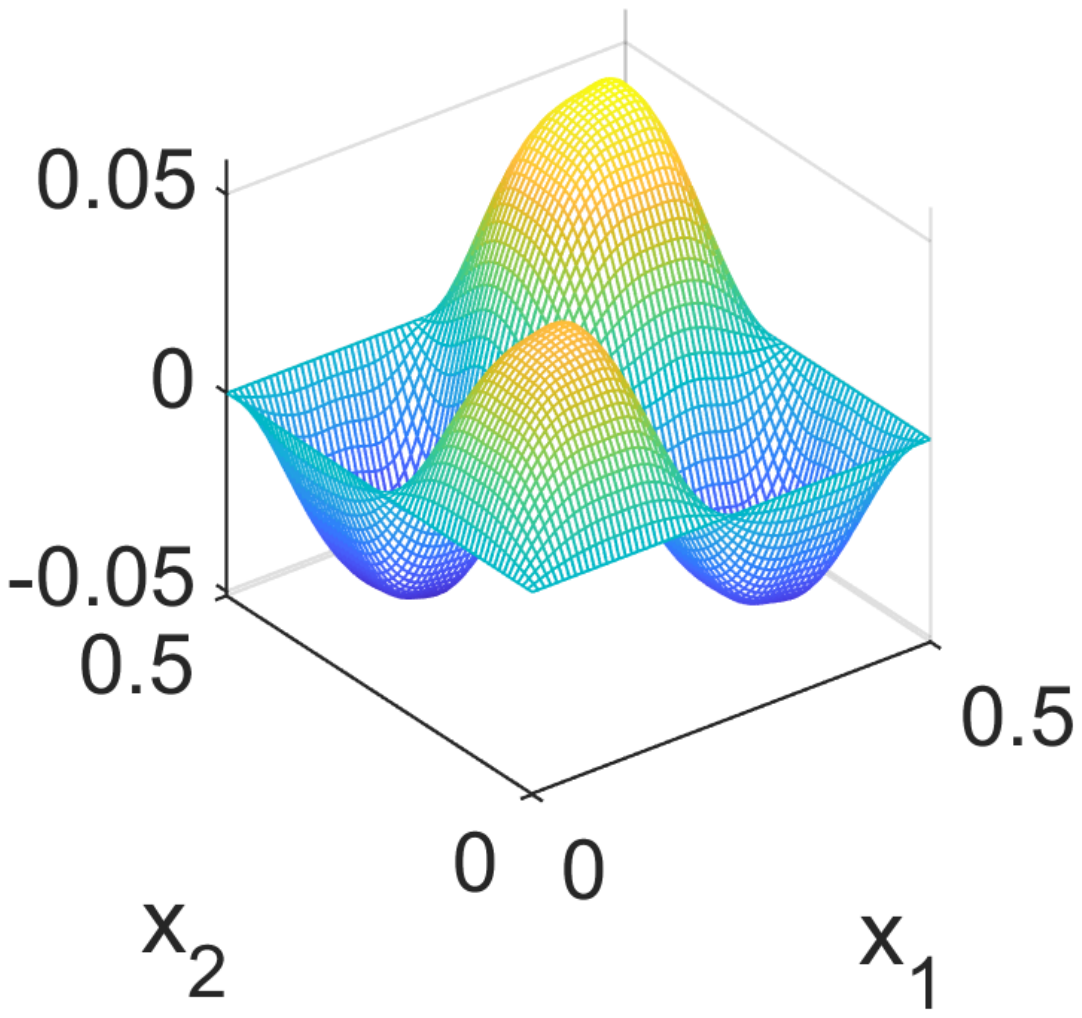}
  \includegraphics[width=0.25\textwidth]{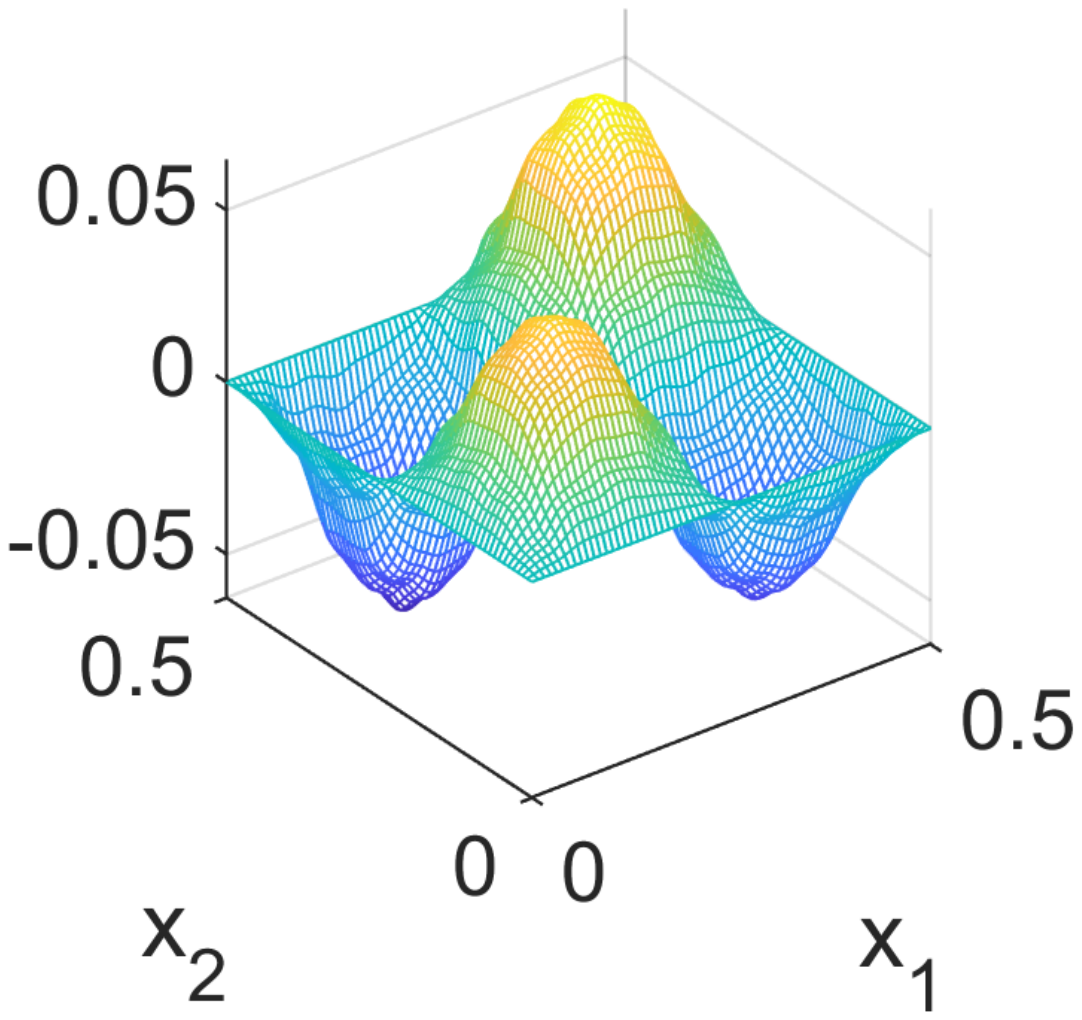}\\
  \includegraphics[width=0.25\textwidth]{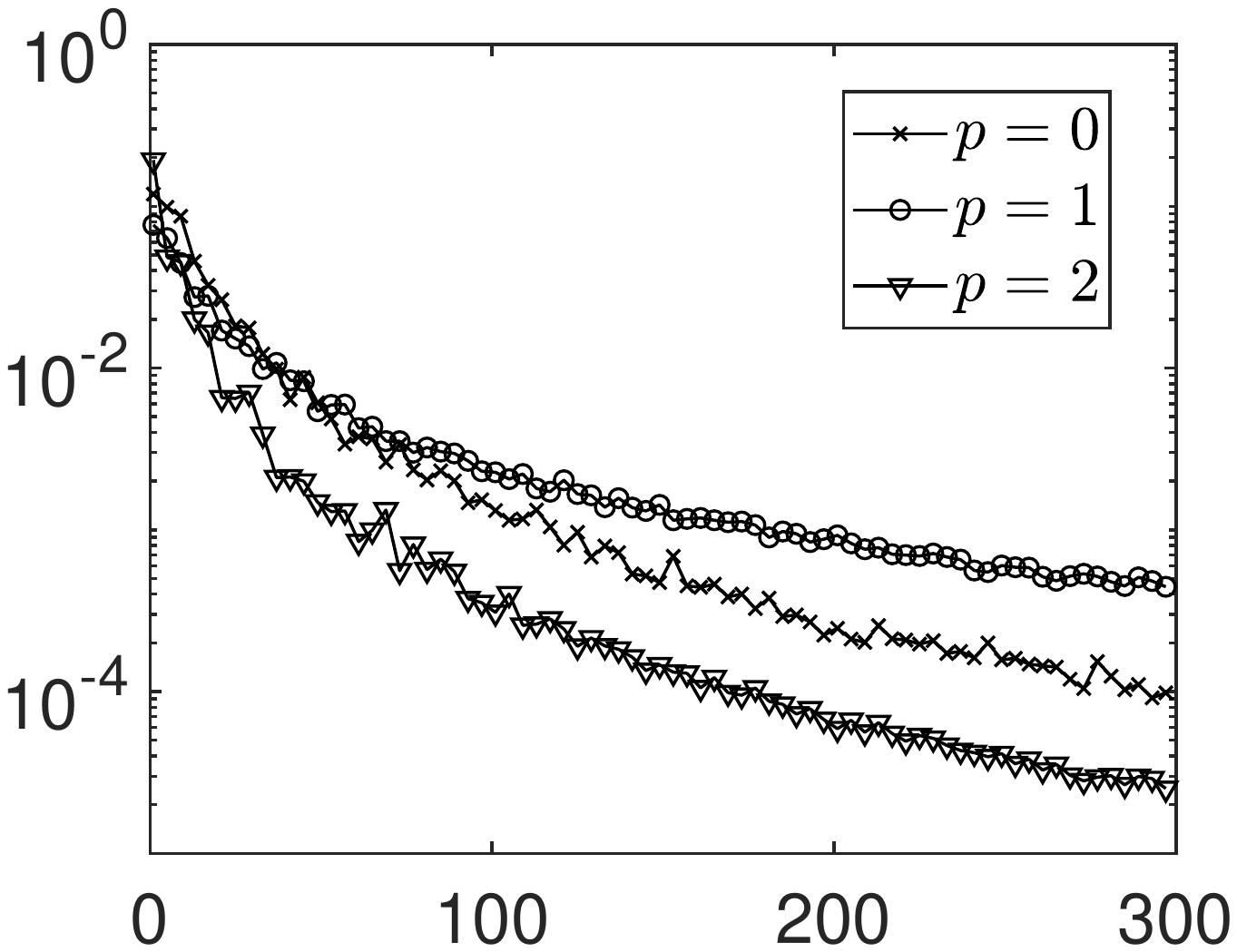}
  \includegraphics[width=0.25\textwidth]{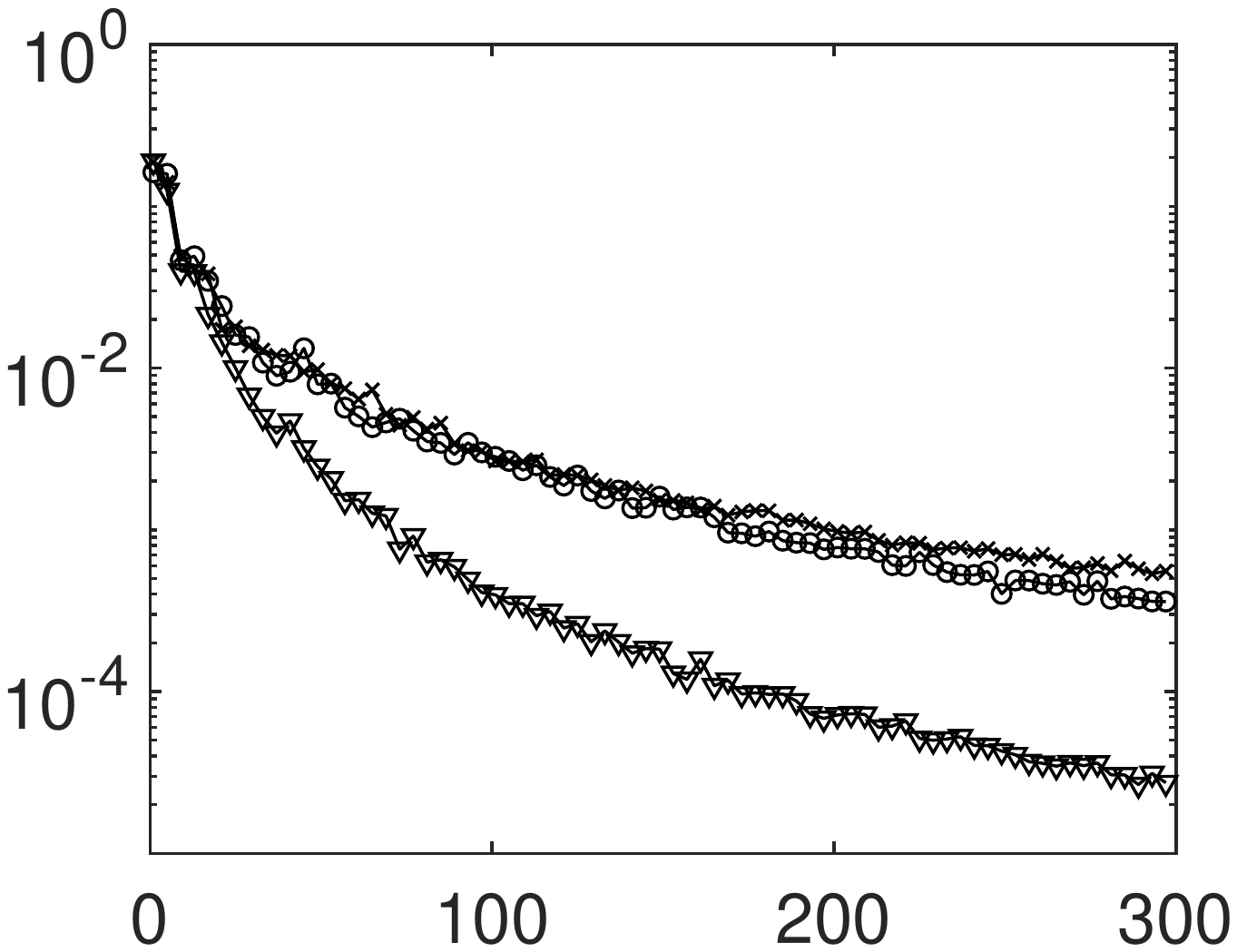}
  \includegraphics[width=0.25\textwidth]{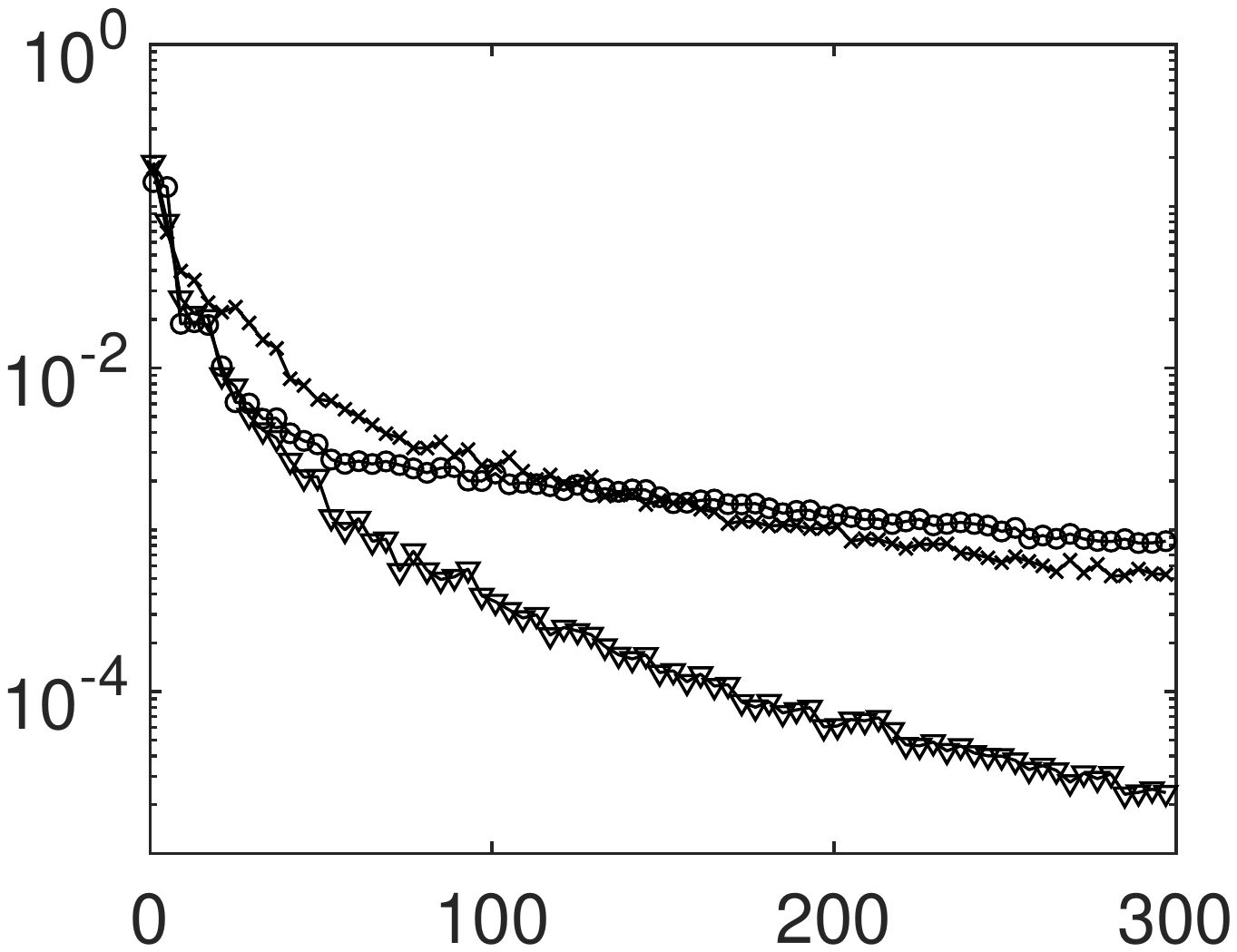}\\
  \includegraphics[width=0.25\textwidth]{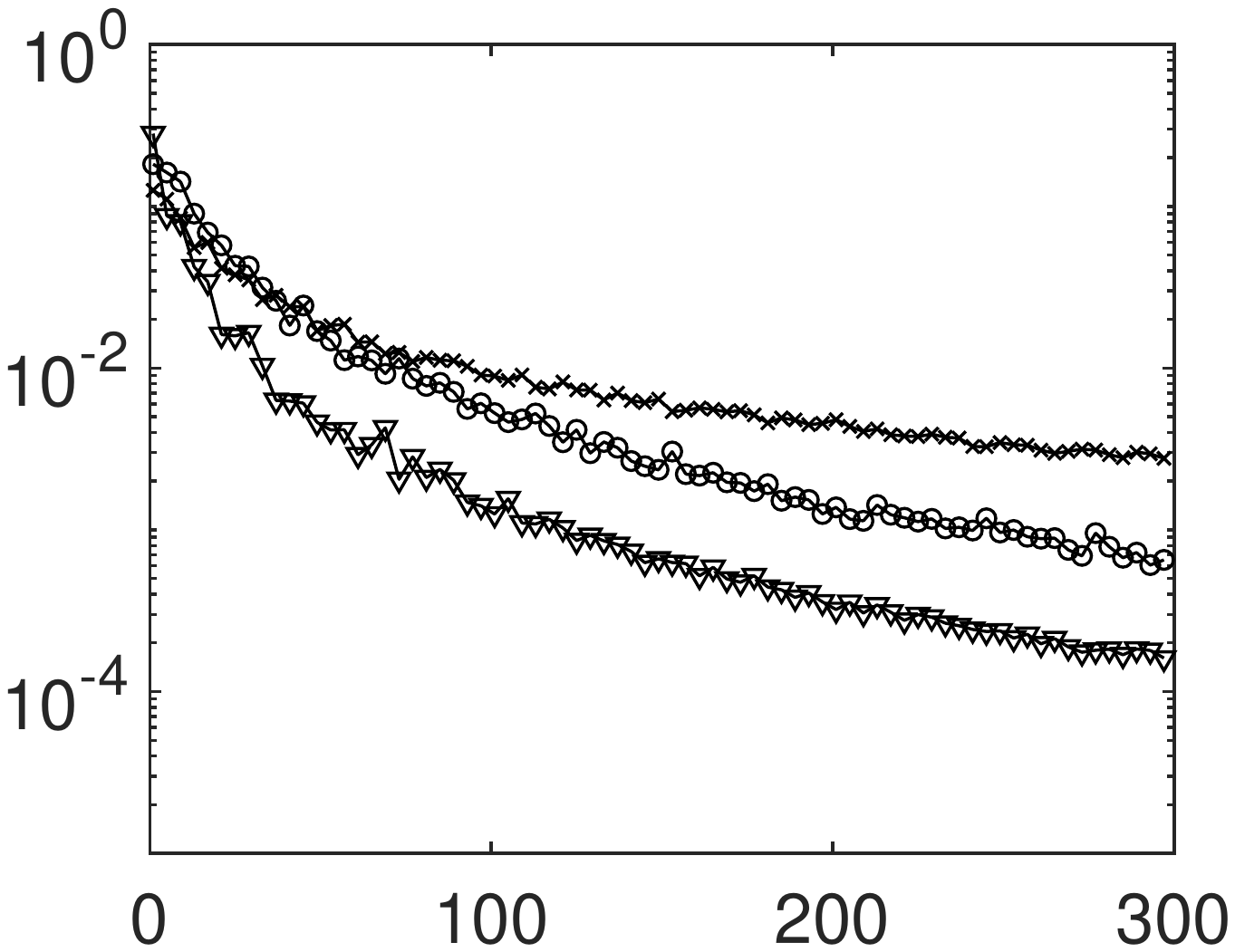}
  \includegraphics[width=0.25\textwidth]{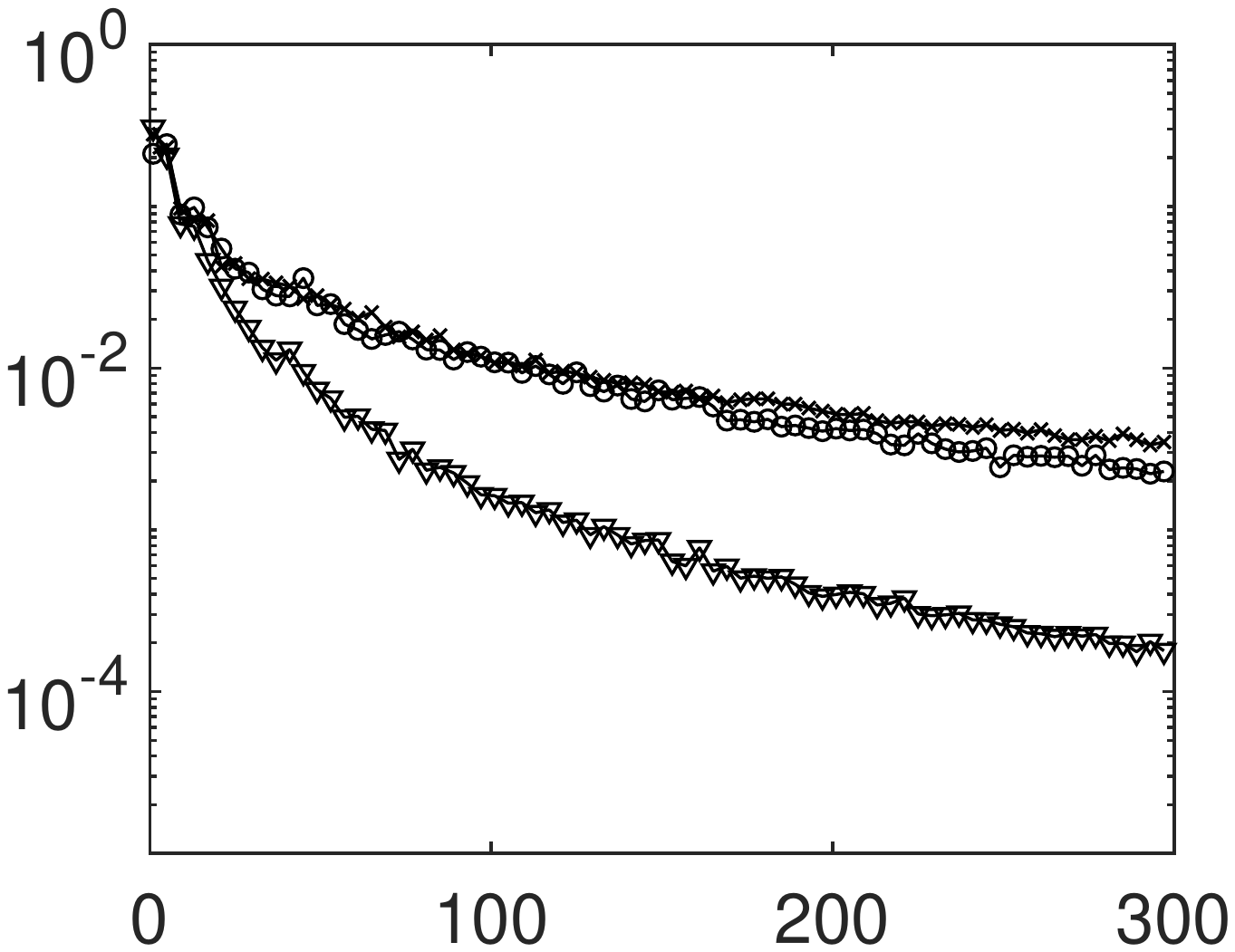}
  \includegraphics[width=0.25\textwidth]{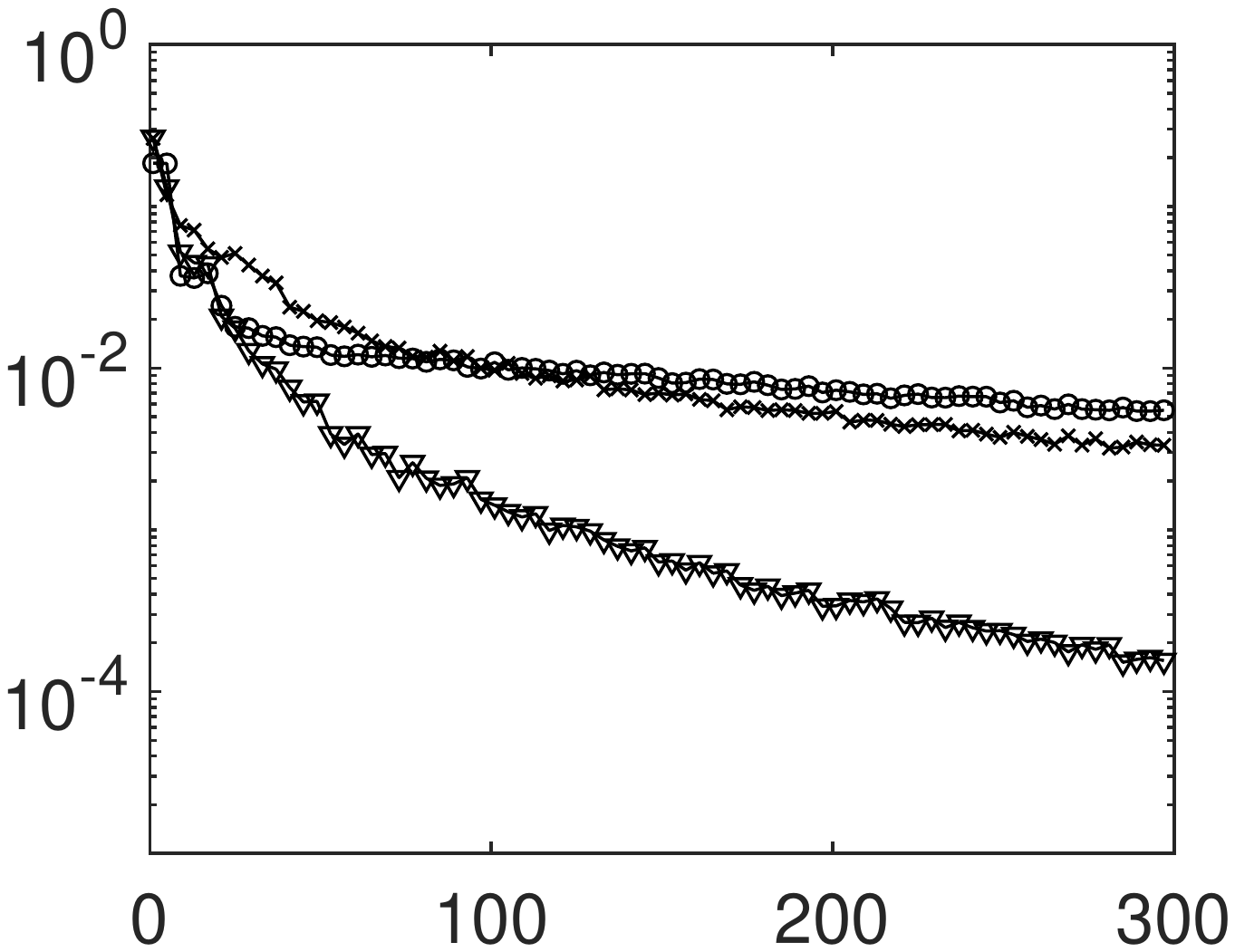}

  \caption{
  First row plots reference solutions for the semilinear elliptic equation~\eqref{eqn:semilinear_elliptic} with $\eps=2^{0}$ (left), $2^{-2}$ (middle) and $2^{-4}$ (right) using the medium~\eqref{eqn:medium}. Second row plots  the decay of $L^2$-norm as the number of basis increases with $\eps=2^{0}$ (left), $2^{-2}$ (middle) and $2^{-4}$ (right). Third row plots the decay of energy-norm with $\eps=2^{0}$ (left), $2^{-2}$ (middle) and $2^{-4}$ (right).
  The $L^2$ error achieves $10^{-4}$ with basis size $300$, in comparison to $3969$ degree of freedom}
  \label{fig:semilinear_ref_soln}
\end{figure}

\subsubsection{Semilinear radiative transport equation}\label{sec:semilinear_rte_numerics}

The last example concerns the semilinear radiative transport equation with two-photon absorption:
\begin{equation}\label{eqn:semilinearRTE}
\begin{cases}
v\cdot \nabla_x u + (\sigma_\mathrm{a}(x) + \sigma_\mathrm{s}(x) + \sigma_\mathrm{b}(x) \langle u \rangle (x)) u
&=  \sigma_\mathrm{s}(x) \mathcal{K} u(x,v)   + f(x,v),\quad x\in\Omega\,,v\in\Sb^1\,, \\
 u(x,v) &= 0,\quad (x,v)\in\Gamma_-\,,
 \end{cases}
\end{equation}
where the scattering operator $\mathcal{K}$, the scattering coefficient $\sigma_{\mathrm{s}}(x)$ and the single-photon absorption coefficient $\sigma_{\mathrm{a}}(x)$ are defined in~\eqref{eqn:scat_op}-\eqref{eqn:absp_rte}. We denote by $\langle u \rangle$ the average of $u(x,v)$ in the velocity $v$, that is,
\[
\langle u \rangle = \int_{\mathbb{S}^1} u(x,v) \rmd \mu(v)\,,
\]
where $\mu$ is the normalized spherical measure on $\Sb^{1}$. The coefficient $\sigma_{\mathrm{b}}$ is called the two-photon absorption coefficient and it models the molecule excitation process that simultaneously absorbs two photons. In the following experiments, we choose the two-photon absorption coefficient as
\begin{equation}
    \sigma_\mathrm{b}(x) = 0.1 \eps_1 (2 + 0.5\cos(x_1) + 0.5\sin(x_2) )\,.
\end{equation}

To implement~\cref{ALG2}, we separate the linear terms and the nonlinear term directly by choosing $\mathcal{L}$ and $\mathcal{N}$ in~\eqref{eqn:MP1r} as the following
\begin{equation}
    \mathcal{L} u = v\cdot \nabla_x u + (\sigma_\mathrm{a}(x) + \sigma_\mathrm{s}(x) ) u
-  \sigma_\mathrm{s}(x) \mathcal{K} u(x,v)\,, \quad \mathcal{N}(u) = \sigma_\mathrm{b}(x) \langle u \rangle (x) u \,.
\end{equation}
We thus reuse the optimal basis obtained in the RTE example in~\Cref{sec:RTE}. The fixed-point iteration of~\cref{ALG2} is again implemented.

As in~\Cref{sec:RTE}, we use the upwind method and trapezoidal rule to discretize~\eqref{eqn:semilinearRTE} with uniform grid with mesh size $h = 2^{-6} = \tfrac{1}{64}$.

In \cref{fig:semilinearRTE_err} we plot the relative $L^2$ error as a function of the number of bases for different sources, media, and settigns of $p$. The example shows that different values of $\eps_2$ do not deteriorate the performance of the optimal basis, but on average smaller $\eps_1$ gives better approximation using smaller number of basis functions. Larger values of $p$ make the source term more regular, thus producing smaller error.

\begin{figure}[htbp]
  \centering
  \includegraphics[width=0.3\textwidth]{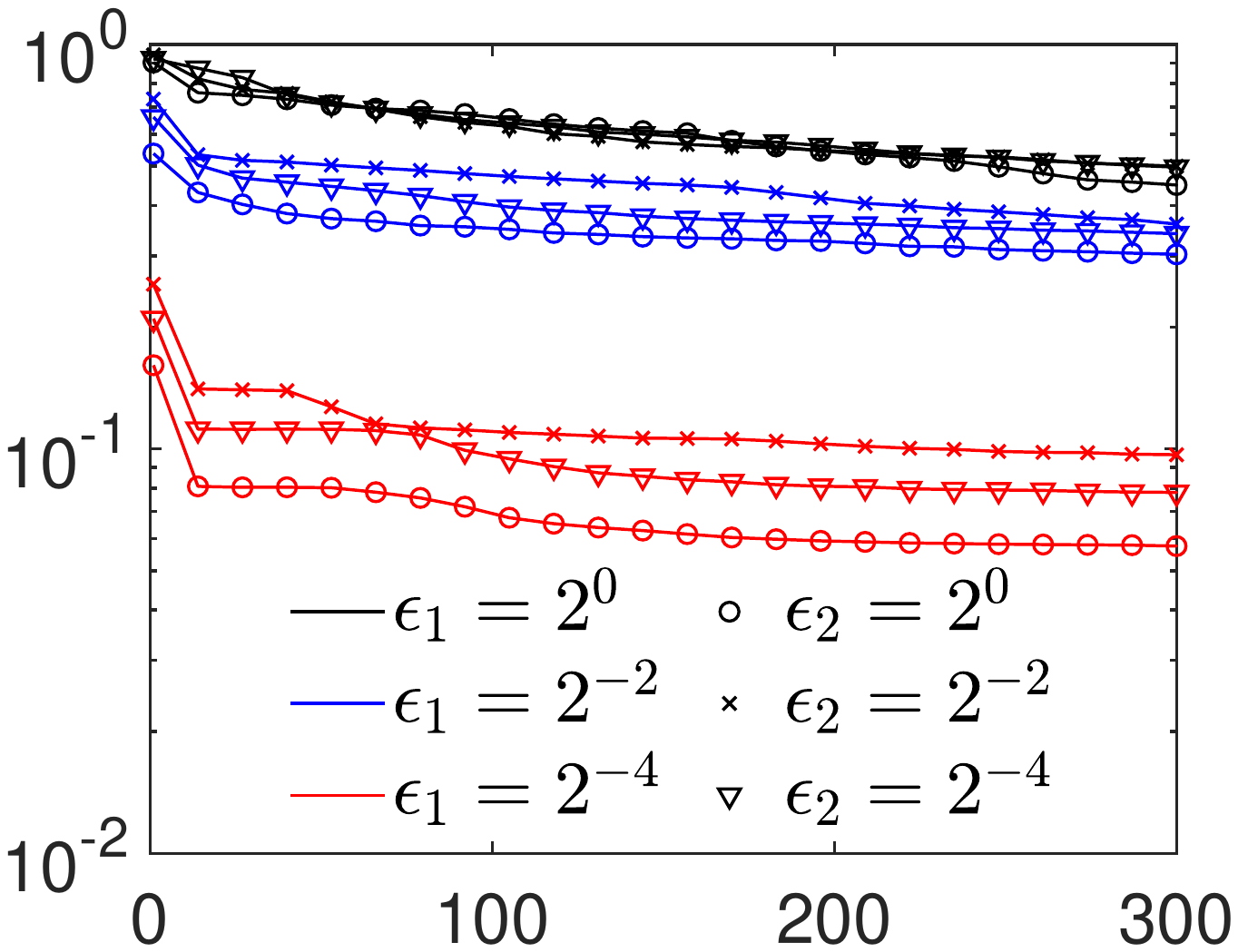}
  \includegraphics[width=0.3\textwidth]{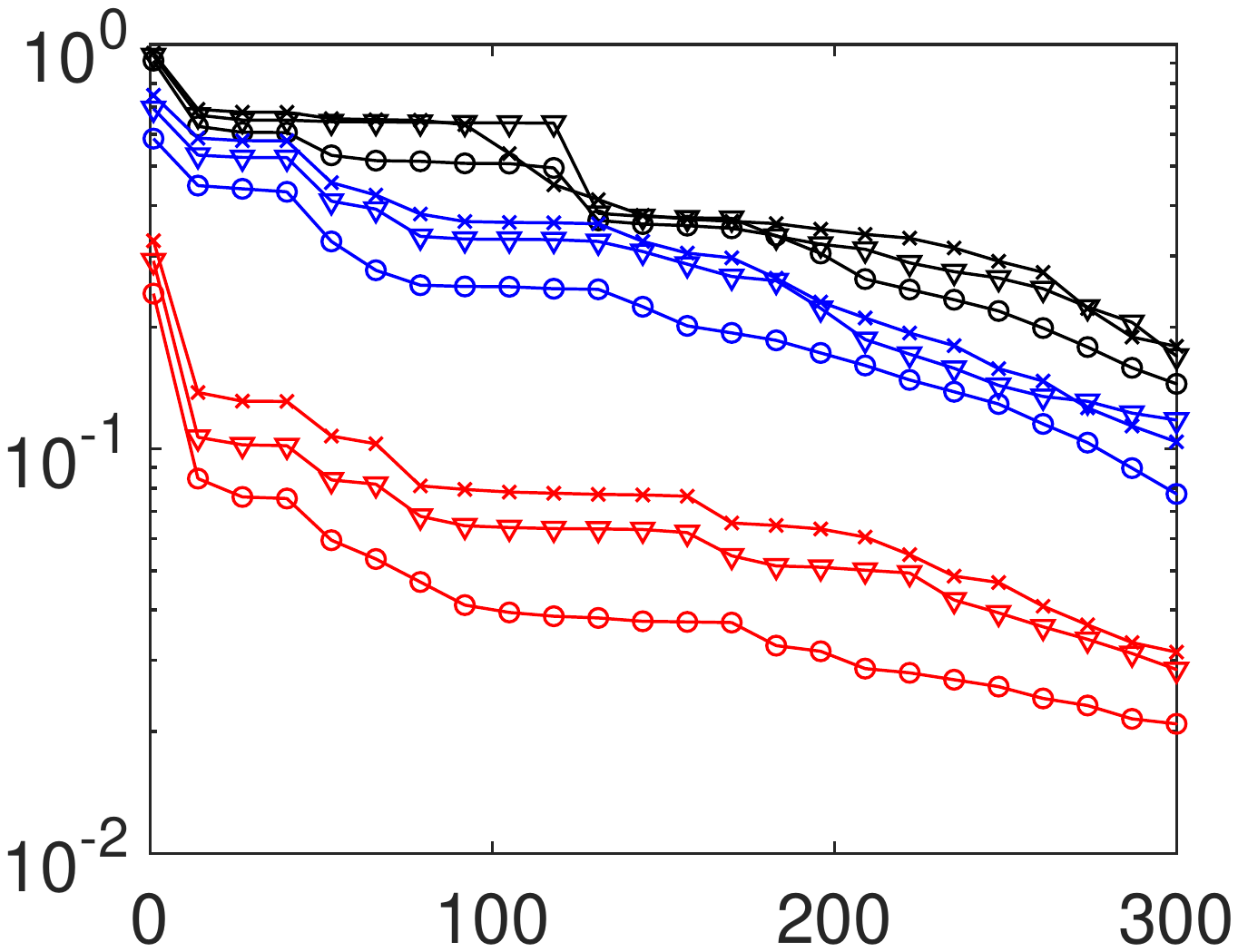}
  \includegraphics[width=0.3\textwidth]{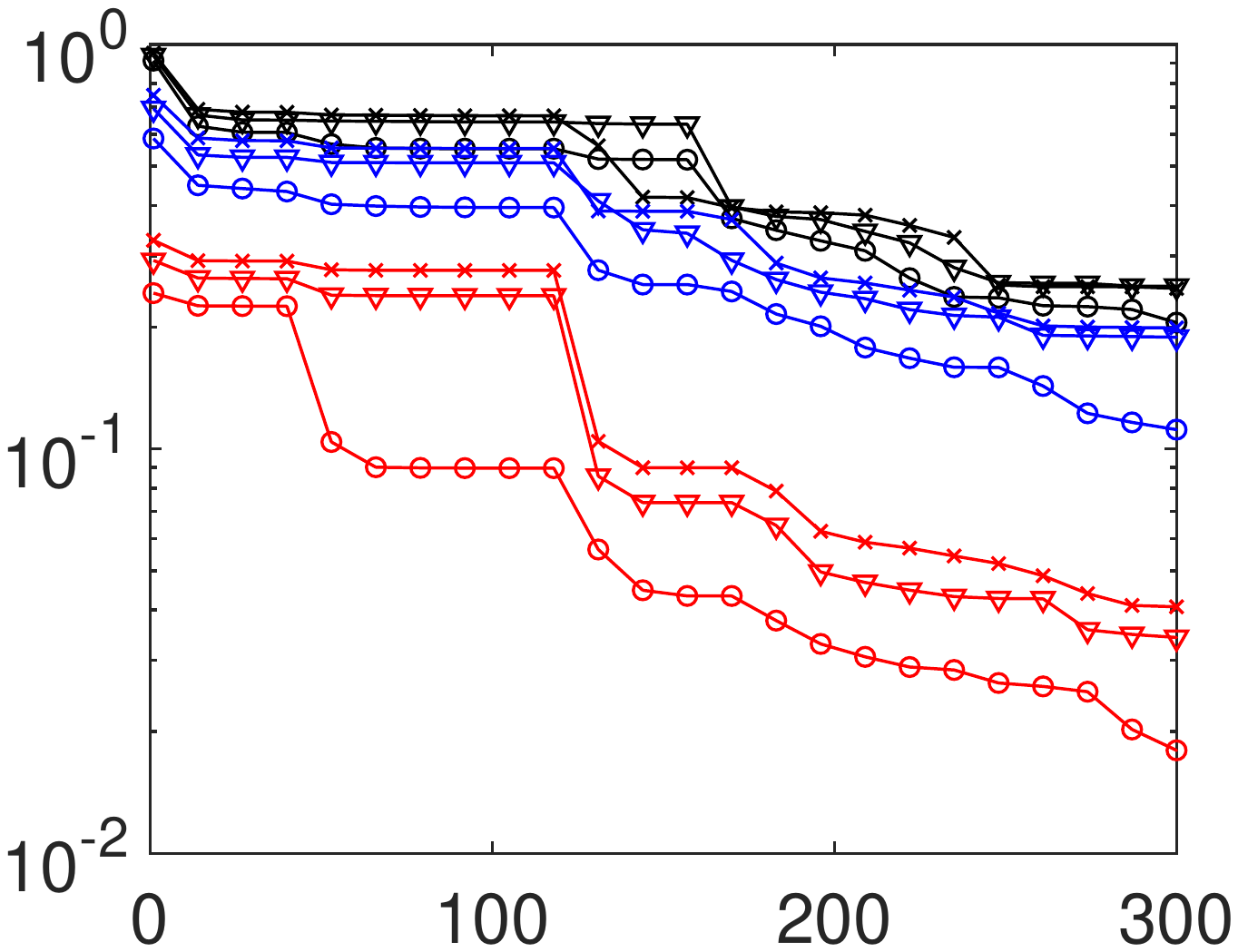}
  \caption{Relative $L^2$ error for the semilinear RTE~\eqref{eqn:semilinearRTE} with different media, sources and smoothness indices $p=0$ (left), $p=1$ (middle) and $p=2$ (right). The source is $0.1$ of the source given in~\eqref{def.f}.
  }
  \label{fig:semilinearRTE_err}
\end{figure}

\section*{Acknowledgments}
Q. Li thanks Thomas Y. Hou and Houman Owhadi for the many insights in the discussion.

\newpage

\appendix

\section{Proof of Theorem~\ref{thm:nonlinearcase2}}\label{app:thm_nonlinearcase2}
Before proving the two inequalities, we first define
\[
 c_i = \left\langle f-\mathcal{N}(u), \hat{v}_{i} \right\rangle\,.
\]
Since $u$ is the true solution, we have
\[
u=\sum^\infty_{i=1} \lambda_i c_i\hat{u}_{i},\quad
\mathcal{L}u=\sum^\infty_{i=1}c_i\hat{v}_{i} = f-\mathcal{N}\left(\sum^\infty_{i=1}\lambda_i c_i\hat{u}_{i}\right)\,.
\]
\noindent\textbf{Proof of~\eqref{eqn:firsterrorbound}:}
To prove this inequality, we first denote
\[
u_n = \sum^n_{i=1} \lambda_i \hat{c}_i \hat{u}_{i}\,,\quad\text{with $\{\hat{c}_i \}$ solving~\eqref{eqn:firstchoiceofci}}.
\]
We first formulate three sets of equations:
\[
\begin{aligned}
\mathcal{L}u+\mathcal{N}{(u)} &= f\\
\mathcal{L}\left(\sum^n_{i=1} \lambda_i c_i \hat{u}_{i}\right) +\mathcal{N}{\left(\sum^n_{i=1} \lambda_i c_i \hat{u}_{i}\right)} & = f+\mathcal{R}_\text{truncation}\\
\mathcal{L}u_n+\mathcal{N}{(u_n)} &= f +\mathcal{R}_1\\
\end{aligned}
\]
where
\[
\mathcal{R}_\text{truncation}=\sum^n_{i=1}c_i\hat{v}_{i} - f + \mathcal{N}\left(\sum^n_{i=1}\lambda_i c_i\hat{u}_{i}\right)\,,\quad
\mathcal{R}_1=\sum^n_{i=1}\hat{c}_i\hat{v}_{i} - f + \mathcal{N}\left(\sum^n_{i=1}\lambda_i \hat{c}_i\hat{u}_{i}\right)\,.
\]
According to the optimality condition~\eqref{eqn:firstchoiceofci}, we have
\[
\|\mathcal{R}_1\|_\mathcal{X}\leq\|\mathcal{R}_\text{truncation}\|_\mathcal{X}\,.
\]
Then according to the stability Assumption~\ref{assum:stability}, we have
\[
\|u_n-u\|\leq E(\|\mathcal{R}_1\|_\mathcal{X})\leq E(\|\mathcal{R}_\text{truncation}\|_\mathcal{X})\,.
\]
To analyze $\mathcal{R}_\text{truncation}$, we have:
\[
\begin{aligned}
\|\mathcal{R}_\text{truncation}\|_\mathcal{X}=&\left\|\sum^n_{i=1}c_i\hat{v}_{i} - f + \mathcal{N}\left(\sum^n_{i=1}\lambda_i c_i\hat{u}_{i}\right)\right\|_{\mathcal{X}}\\
\leq &\left\|\sum^\infty_{i=n+1}c_i\hat{v}_{i}\right\|_\mathcal{X}+\left\|\mathcal{N}\left(\sum^n_{i=1}\lambda_i c_i\hat{u}_{i}\right)-\mathcal{N}\left(\sum^\infty_{i=1}\lambda_i c_i\hat{u}_{i}\right)\right\|_{\mathcal{X}}\\
\leq &\left\|\sum^\infty_{i=n+1}c_i\hat{v}_{i}\right\|_\mathcal{X}+C\left\|\sum^\infty_{i=n+1}\lambda_i c_i\hat{u}_{i}\right\|_\mathcal{Y}\\
\leq &\left(1+C\lambda_{n+1}\right)
\mathcal{E}^{(1)}_n\,.
\end{aligned}
\]
in summary, we conclude that
\[
\|u_n-u\|_{\mathcal{Y}}\leq E\left(\left(1+C\lambda_{n+1}\right)
\mathcal{E}^{(1)}_n
\right),
\]
which proves \eqref{eqn:firsterrorbound}.

\noindent\textbf{Proof of~\eqref{eqn:seconderrorbound}}. To prove this inequality, we first denote
\[
u_n = \sum^n_{i=1} \lambda_i \hat{c}_i \hat{u}_{i}\,,\quad\text{with $\{\hat{c}_i \}$ solving \eqref{eqn:secondchoiceofci}}
\]
To proceed, we first define $u^{(3)}=u_n+u^{(2)}$ with $u^{(2)}$ solving
\begin{equation}\label{eqn:utilde}
\mathcal{L} (u^{(2)}(x))=P^{\perp}_n\left(f-\mathcal{N}\left(u_n\right)\right)\,.
\end{equation}
This gives:
\[
\left\|u_n-u\right\|_{\mathcal{Y}}\leq \left\|u^{(2)}\right\|_{\mathcal{Y}} + \left\|u^{(3)}-u\right\|_{\mathcal{Y}}\,.
\]
To control $u^{(2)}$, recall its definition~\eqref{eqn:utilde}:
\begin{equation}\label{eqn:wubound}
\|u^{(2)}\|_{\mathcal{Y}}\leq \lambda_{n+1}\left\|P^{\perp}_n\left(f-\mathcal{N}\left(u_n\right)\right)\right\|_{\mathcal{X}}=\lambda_{n+1}\mathcal{E}_{n}^{(2)}\,.
\end{equation}
To control $u^{(3)}-u$, according to stability Assumption~\ref{assum:stability},
\begin{equation}\label{eqn:bound}
\begin{aligned}
\|u^{(3)}-u\|_\mathcal{Y} & \leq
 E\left( \left\|\mathcal{L}\left(u^{(3)}\right)-\left(f-\mathcal{N}\left(u^{(3)}\right)\right)\right\|_{\mathcal{X}} \right)\\
& = E\left( \left\|\sum^n_{i=1}\hat{c}_i\hat{v}_{i}+\mathcal{L}(u^{(2)})-\left(f-\mathcal{N}\left(\sum^n_{i=1}\lambda_i\hat{c}_i\hat{u}_{i}+u^{(2)}\right)\right)\right\|_{\mathcal{X}} \right) \\
& \leq E\Bigg( \left\|\sum^n_{i=1}\hat{c}_i\hat{v}_{i}+\mathcal{L}(u^{(2)})-\left(f-\mathcal{N}\left(\sum^n_{i=1}\lambda_i\hat{c}_i\hat{u}_{i}\right)\right)\right\|_{\mathcal{X}} \\
&\qquad +\left\|\mathcal{N}\left(\sum^n_{i=1}\lambda_i\hat{c}_i\hat{u}_{i}+u^{(2)}\right)-\mathcal{N}\left(\sum^n_{i=1}\lambda_i\hat{c}_i\hat{u}_{i}\right)\right\|_{\mathcal{X}} \Bigg) \\
& \leq E\left( \left\|\sum^n_{i=1}\hat{c}_i\hat{v}_{i}-P_n\left(f-\mathcal{N}\left(\sum^n_{i=1}\lambda_i\hat{c}_i\hat{u}_{i}\right)\right)\right\|_{\mathcal{X}}+C\lambda_{n+1}\mathcal{E}_{n}^{(2)} \right)
\end{aligned}
\end{equation}
where we use \eqref{eqn:utilde} and locally Lipschitz of $\mathcal{N}$ in the last inequality. Noting from~\eqref{eqn:secondchoiceofci} that
\[
\left\|\sum^n_{i=1}\hat{c}_{i}\hat{v}_{i}-P_n\left(f-\mathcal{N}\left(\sum^n_{i=1}\lambda_i\hat{c}_{i}\hat{u}_{i}\right)\right)\right\|_{\mathcal{X}}\leq \left\|\sum^n_{i=1}c_i \hat{v}_{i}-P_n\left(f-\mathcal{N}\left(\sum^n_{i=1}\lambda_i c_i \hat{u}_{i}\right)\right)\right\|_{\mathcal{X}}\,,
\]
we have
\[
\begin{aligned}
\left\|u^{(3)}-u\right\|_{\mathcal{Y}} & \leq E\left( \left\|\sum^n_{i=1}c_i \hat{v}_{i}-P_n\left(f-\mathcal{N}\left(\sum^n_{i=1}\lambda_i c_i \hat{u}_{i}\right)\right)\right\|_{\mathcal{X}}+C\lambda_{n+1}\mathcal{E}_{n}^{(2)} \right)\\
& \leq E\left( C\left\|\sum^\infty_{i=n+1}\lambda_i c_i\hat{u}_{i}\right\|_{\mathcal{Y}}+C\lambda_{n+1}\mathcal{E}_{n}^{(2)} \right)\\
& \leq E\left( C\lambda_{n+1}\left(\left\|\sum^\infty_{i=n+1}c_i\hat{v}_{i}\right\|_{\mathcal{X}}+\mathcal{E}_{n}^{(2)}\right) \right)\\
& \leq  E\left( C\lambda_{n+1}\left(\mathcal{E}_{n}^{(1)}+\mathcal{E}_{n}^{(2)}\right) \right),
\end{aligned}\,,
\]
where we use definition of $c_i$ and $\mathcal{N}$ is locally Lipschitz in the fourth inequality.
This estimate, when combined with~\eqref{eqn:wubound} gives:
\[
\begin{aligned}
\left\|u_n-u\right\|_{\mathcal{Y}}\leq \left\|u^{(2)}\right\|_{\mathcal{Y}} + \left\|u^{(3)}-u\right\|_{\mathcal{Y}} \leq \lambda_{n+1}\mathcal{E}_{n}^{(2)} + E\left(C\lambda_{n+1}\left(\mathcal{E}_{n}^{(1)}+\mathcal{E}_{n}^{(2)}\right)\right),
\end{aligned}
\]
concluding the proof for~\eqref{eqn:seconderrorbound}.

\section{Weight matrix}\label{appendix:Pi}
The weight matrix $\mathsf{\Pi}_\mathsf{X}$ is adjusted to reflect the metric used for the space $\mathsf{X}$. We discuss how to set weight matrix for $H^p$ where $p\geq1$. Let $\mathsf{f}\in\mathsf{X}$, and  denote the grid points in $x$-direction by $0=x_0<x_1<\cdots<x_{m-1}<x_m=.5$ and the grid points in $y$-direction by $0=y_0<y_1<\cdots<y_{m-1}<y_m=.5$, we reshape $\mathsf{f}$ into a matrix with $\mathsf{f}_{ij}$ denoting its evaluation at $(x_i,y_j)$. For $k\in\mathbb{N}$, we first define the 1D $k$th order finite difference operator $\mathsf{D}^k: \Rb^{m-1}\to \Rb^{m-k-1}$ inductively, as follows:
\begin{equation}
    [\mathsf{D}^k \mathsf{w}]_i = \frac{[\mathsf{D}^{k-1} \mathsf{w}]_{i+1}-[\mathsf{D}^{k-1}\mathsf{w}]_i}{h}\,, \quad i = 1,\dots, n-k-1 \,, \quad \forall \mathsf{w}\in\Rb^{m-1} \,,
\end{equation}
with $\mathsf{D}^0 = \mathsf{I}$ being the identity matrix. As a counterpart to the differential operator $\partial_{x}^i\partial_{y}^j$, the 2D finite difference operator $\mathsf{D}^{i,j}:\Rb^{(m-1)^2}\to \Rb^{(m-1-i)(m-1-j)}$ is defined as the matrix tensor product of 1D finite difference operators
\begin{equation}\label{eqn:2D-FD}
     \mathsf{D}^{i,j} = \mathsf{D}^i \otimes \mathsf{D}^j\,.
\end{equation}
We then define the discrete Sobolev $p$-norm by
\begin{equation}
    \|\mathsf{f}\|_{p}^2 = h^2\sum_{k=0}^p \sum_{i=0}^k \| \mathsf{D}^{i,p-i} \mathsf{f} \|^2 \,, \quad \forall \mathsf{f}\in\Rb^{(m-1)^2},
\end{equation}
where $\|\cdot\|$ denotes the regular $L^2$ vector-space Euclidean norm. As such, the weight matrix $\mathsf{\Pi}_\mathsf{X}$ would be:
\begin{equation}
    \mathsf{\Pi}_\mathsf{X} = \sum_{k=0}^p \sum_{i=0}^k (\mathsf{D}^{i,p-i})^\top \cdot \mathsf{D}^{i,p-i} \,,
\end{equation}
to represent the metric for $\mathsf{X} = H^p$.

Following the notations above, we can also define the energy norm for the solution:
\begin{equation}\label{eqn:energy_norm_def}
    \|\mathsf{u}\|_\mathrm{E}^2 = h^2\|\mathsf{D}^{1,0}\mathsf{u}\|^2 + h^2\|\mathsf{D}^{0,1}\mathsf{u}\|^2 \,, \quad \forall \mathsf{u}\in\Rb^{(m-1)^2}\,.
\end{equation}

\bibliographystyle{siamplain}
\bibliography{references}

\end{document}